\documentclass[11pt]{article}

\usepackage[margin=1in]{geometry}
\usepackage{amsmath}
\usepackage{amssymb}
\usepackage{amsfonts}
\usepackage{amsthm}
\usepackage{keytheorems}
\usepackage{graphicx}
\usepackage{hyperref}
\usepackage{cleveref}
\usepackage{quiver}
\usepackage{graphicx}
\usepackage{enumitem}

\title{
    \bfseries
    \Large
    Cerf Diagrams and Hatcher-Wagoner
    Invariants for Barbell Maps
}
\author{Xiayu Tan}
\date{}

\newcounter{thmenvcnt}[section]

\NewDocumentCommand{\newtheoremalias}{m m O{} O{\MakeLowercase{#2}} O{#4s} O{#2} O{#6s}}{%
  \newkeytheorem{#1}[
    title={#2},
    sibling=thmenvcnt,
    refname={#4,#5},
    Refname={#6,#7},
    qed={#3} 
  ]
  \newkeytheorem{#1*}[
    title={#2},
    numbered=no,
  ]
}

\theoremstyle{plain}
\newtheoremalias{theorem}{Theorem}
\newtheoremalias{lemma}{Lemma}
\newtheoremalias{proposition}{Proposition}
\newtheoremalias{corollary}{Corollary}[][corollary][corollaries][Corollary][Corollaries]

\theoremstyle{definition}
\newtheoremalias{definition}{Definition}
\newtheoremalias{example}{Example}
\newtheoremalias{remark}{Remark}[$\lrcorner$]
\newtheoremalias{question}{Question}
\newtheoremalias{acknowledgement}{Acknowledgement}

\graphicspath{{figures/}}

\DeclareMathOperator{\Diff}{Diff}
\DeclareMathOperator{\Emb}{Emb}
\DeclareMathOperator{\id}{id}
\DeclareMathOperator{\Wh}{Wh}
\newcommand{\Z}{\mathbb{Z}}

\begin{document}

\maketitle


\begin{abstract}
    For a half-unknotted implanted $(i,n-i)$-barbell $\beta=\beta_{i,n-i}$ in $M^n$, we construct two specific pseudo-isotopies, which we denote by standard barbell pseudo-isotopies, both resulting in that barbell diffeomorphism, each having a Cerf diagram only containing a single eye and with easily computable Hatcher-Wagoner invariants. We give an explicit formula for $\beta_{2,n-2}$ and a special class of $\beta_{3,n-3}$. Using this we show that for $n\geq 6$, every pseudo-isotopy with vanishing first Hatcher-Wagoner invariant can be isotoped to a composition of standard barbell pseudo-isotopies with $i=2$ or $3$. In dimension $n=4$, we further generalize the constructions and computations to half-unknotted immersed barbell diffeomorphisms and prove that for every $s\in \mathbb{Z}_2, \sigma\in \pi_2 M,\gamma\in \pi_1 M$ with $s=0 \text{ or }w_2^M(\sigma)\neq0$, there exists a standard immersed barbell pseudo-isotopy $f_\beta$ with the second induced Hatcher-Wagoner invariant $\Theta(f_\beta)=(s,\sigma)\cdot [\gamma]$.
\end{abstract}

\section{Introduction}

Given a smooth oriented manifold $X$, let $\mathcal{P}(X)$ be the pseudo-isotopy group of $X$ defined by 
$$\mathcal{P}(X):=\{f\in \Diff(X\times I)|f_{\partial X\times I\cup X\times{0}}=\id\}$$
The pseudo-isotopy group $\mathcal{P}(X)$ is closely related to the diffeomorphism group of $X$ by $\mathcal{P}(X)\xrightarrow{F} \Diff(X, \partial): f\to F_f=f|_{X\times 1}$, in that case we say that $f$ results in $F_f$.

Hatcher and Wagoner first studied $\pi_0\mathcal{P}(X)$ in their collected work \cite{AST_1973__6__1_0} and it was later studied by Igusa in \cite{K.Igusa}. They found two invariants for the group, namely, $$\Sigma: \pi_0(P)\to \Wh_2(\pi_1X)\text{ and }\Theta: \ker\Sigma\to \Wh_1(\pi_1X; \mathbb{Z}_2\times \pi_2 X)/\chi(K_3\mathbb{Z}[\pi_1 X])$$
where we have $\Wh_1(\pi_1X; \mathbb{Z}_2\times \pi_2 X)=(\mathbb{Z}_2\times \pi_2 M)[\pi_1 M]/(\beta\cdot [1], \alpha\cdot [\sigma]-\alpha^\tau\cdot [\tau\sigma\tau^{-1}], \alpha,\beta\in \mathbb{Z}_2\times \pi_2 M, \tau,\sigma\in \pi_1 M)$.  These two maps are always well-defined whenever $\text{dim}X\geq 4$, and they showed that $\Sigma$ is always surjective when $\text{dim}X\geq 5$, and $\Theta$ is surjective when $\text{dim}X\geq 5$ and bijective when $\text{dim}X\geq 6$. In \cite{singh2022pseudoisotopiesdiffeomorphisms4manifolds} Singh studied the partial images of $\Sigma$ and $\Theta$ in dimension 4 and showed that both $\Sigma$ and $\Theta$ are stably surjective.

The pseudo-isotopy group is closely related to the diffeomorphism group, for there is a natural fiber bundle 
$$\mathcal{J}=\Diff(X\times I,\partial)\to \mathcal{P}(X)\xrightarrow{F} \Diff_{PI}(X,\partial)$$
where $\Diff_{PI}(X,\partial)\subset \Diff(X,\partial) $ denotes all the diffeomorphisms of $X$ which are pseudo-isotopic to identity. Thus we have \emph{induced} Hatcher-Wagoner invariants $$\Sigma: \pi_0\Diff_{PI}(X,\partial)\to \Wh_2(\pi_1 X)/\Sigma(\mathcal{J})$$ $$\Theta: \pi_0F(\ker\Sigma)\to \Wh_1(\pi_1 X; \mathbb{Z}_2\times \pi_2 X)/(\Theta(\mathcal{J}\cap \ker\Sigma)+\chi(K_3\mathbb{Z}[\pi_1 X])) $$ on the diffeomorphism group of $X$, where $\pi_0F(\ker\Sigma)\subset \pi_0\Diff_{PI}(X,\partial)$. Then one naturally asks: 
\begin{question*}
How to give an expilict construction for a pseudo-isotopy (diffeomorphism) with prescribed (induced) Hatcher-Wagoner invariants?
\end{question*}

One candidate for the above question is the implanted barbell diffeomorphisms which were constructed by Gabai and Budney in \cite{Budney_2025}, where the $n$-dimensional standard $(i,n-i)$-barbell diffeomorphism is a specific nontrivial element in $\pi_0\Diff(\mathcal{B}_{i,n-i}^n,\partial)$ following the notations in \cite{Budney_2025}. Gabai, Budney, Gay and Hartman studied the implanted $(2,2)$-barbell diffeomorphisms in a general $X^4$, especially in $S^1\times D^3$(\cite{budney2021knotted3ballss4}) and $S^4$(\cite{Gay_2025}). One of the remarkable results is that Budney and Gabai constructed an infinitely generated subset in $\pi_0(\Diff(S^1\times D^3,\partial)/\Diff(D^4,\partial))$, more explicitly, an inclusion $$\bigoplus_{k\geq 4\in \mathbb{N}} \mathbb{Z} \to  \pi_0(\Diff(S^1\times D^3,\partial)/\Diff(D^4,\partial)): 1_k\to \delta_k$$
where $\{\delta_k\}_{k\in \mathbb{N}}$ are specific implanted barbell diffeomorphisms in $S^1\times D^3$ which we will carefully describe later in this paper.

Suppose we have an implanted barbell in a general $X^4$, i.e. $\beta:S^2\times D^2 \natural S^2\times D^2\hookrightarrow X$, and therefore an implanted barbell diffeomorphism in $\Diff(X,\partial)$, it is known that (see \cite[Proposition 2.6]{Budney_2025}) if one of the implanted core spheres is unknotted in $X^4$, then the implanted barbell diffeomorphism is pseudo-isotopic to identity. One natural question is:

\begin{question*}
    How to compute the induced Hatcher-Wagoner invariants for a given half-unknotted implanted barbell diffeomorphism?
\end{question*}

Once we get an $f_\beta\in \pi_0\mathcal{P}$ resulting in that implanted barbell diffeomorphism, $\Sigma(f_\beta), \Theta(f_\beta)$ are representatives of the two induced Hatcher-Wagoner invariants. 

In this article, we will not restrict to 4-manifolds. Budney and Gabai present a general definition of barbell diffeomorphisms in \cite{Budney_2025}, namely, for any implanted $(i,j)$-barbell $\beta=\beta_{i,j}: \mathcal{B}_{i,j}^n=S^i\times D^{-i}\natural S^j\times D^{n-j}\hookrightarrow M^n$ with $i+j\geq n$, we can naturally associate it with an element $F_\beta\in \pi_{i+j-n}\Diff(M,\partial)$. In this article, we restrict to the case when $i+j=n$, therefore the implanted barbell $\beta$ gives the corresponding barbell diffeomorphism $F_\beta\in \pi_0\Diff(X,\partial)$. In fact, for any implanted half-unknotted $(i,n-i)$-barbell $\beta$ (when we say an implanted $(i,n-i)$-barbell is half-unknotted in $M$, we usually mean the $(n-i)$-sphere is unknotted in $M$) in $M^n,n\geq 4$, we construct two pseudo-isotopies $g_\beta,f_\beta\in \pi_0\mathcal{P}$, both resulting in the implanted barbell diffeomorphism with respect to $\beta$. $g_\beta$ has a Cerf diagram containing a single eye of $(i-1,i)$-handle pair and $f_\beta$ has a Cerf diagram containing a single eye of $(n-i,n-i+1)$-handle pair. For any half-unknotted implanted $(2,n-2)$-barbell $\beta$, the main result will give an explicit formula for the Hatcher-Wagoner invariants of $f_\beta$, which is stated as follows:

\begin{theorem}
    For a half-unknotted implanted $(2,n-2)$-barbell $\beta=\beta_{2,n-2}=(R_0, S, \gamma)$ in $M^n,n\geq 4$ with $S=\partial \beta_0^\bullet$ where $\beta_0^\bullet: D^{n-1}\hookrightarrow M$, by finger-pushing $R_0$ along the arc, we can make $\gamma$ short enough such that $\text{int}(\beta_0^\bullet)\cap \gamma =\emptyset$. Now suppose that $\beta_0^\bullet \cap R_0=\bigsqcup _{i=1}^k S_i^1$. Choose $p_i\in S_i^1$ and let $*_0=\gamma\cap S$ be the base point. For each $i$, find a path $\delta^B_i\subset \beta=(R_0, S,\gamma)$ which is a path from $*_0$ to $p_i\in S_i^1$. Also, for each $i$, $S_i^1$ divides $R_0$ into two embedded disks $D_i$ and $D_i'$, where $D_i'$ is the one connected to the arc $\gamma$. Let $D^B_i=D_i$ (see \Cref{fig:general-barbell} for an illustration). Then there is a pseudo-isotopy $f_\beta\in \pi_0\mathcal{P}$ resulting in that implanted barbell diffeomorphism with the Cerf diagram of $f_\beta$ being a single eye of $(n-2,n-1)$-handle pair such that:

    $$\Sigma(f_\beta)=0, \Theta(f_\beta)=\sum_{i=1,...,k} (0, [D^B_i]^{\delta_i^B})\cdot  [\delta_i^B] $$
    Here we identify $\pi_i(M,*_0)$ with $\pi_i(M, \beta_0^\bullet)$ so that $[D_i]\in \pi_2(M,\beta_0^\bullet)$ and $\delta_i^B\in \pi_1(M,\beta_0^\bullet)$. 
\end{theorem}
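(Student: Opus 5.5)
We build $f_\beta$ explicitly from the half-unknotted structure and then read off $\Theta$ from its single eye via the standard Hatcher--Wagoner recipe.

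\textbf{Construction of $f_\beta$.} Since $S=\partial\beta_0^\bullet$ bounds an embedded ball $\beta_0^\bullet$, the barbell diffeomorphism is supported in a neighborhood of $R_0\cup\gamma\cup\beta_0^\bullet$. Following \cite[Proposition 2.6]{Budney_2025}, it can be realized as follows. Take the cancelling pair of handles of indices $(n-2,n-1)$ in $M\times I$ associated to $S$: the $(n-2)$-handle has belt sphere dual to $S$, and the $(n-1)$-handle is attached along a sphere meeting that belt sphere once, with the cancelling disk coming from $\beta_0^\bullet$.

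The one-parameter family is then as follows. We birth the pair, isotope the attaching sphere of the $(n-1)$-handle by a loop of isotopies in the middle level that sweeps it around $R_0$ (the ``spinning'' which defines the barbell map), and then cancel the pair. The result is a path of functions $M\times I\to I$ whose Cerf graphic is a single eye, and whose time-one map is the barbell diffeomorphism. Since there is only one pair of handles and the incidence matrix stays $(\pm1)$ throughout, $\Sigma(f_\beta)=0$ is immediate.

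\textbf{Computing $\Theta$.} $\Theta$ of a single eye is computed from the intersections of the descending sphere of the $(n-1)$-handle with the ascending sphere of the $(n-2)$-handle in the middle level, over the whole eye. These intersections form a $1$-manifold of double points. Beyond the single arc that persists from birth to death, its components are circles, and each circle carries two pieces of data:
\begin{itemize}
\item the framing/twisting in $\mathbb{Z}_2$;
\item the $\pi_2$ element in $\pi_2 M$, obtained from the two ways of traversing between the circle and the base arc along the two spheres.
\end{itemize}
Together with the $\pi_1$ group element, this gives the term $(s,\sigma)\cdot[g]$.

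The key step is to identify these circles. During the sweep the attaching sphere traces out a copy of $R_0$ in the middle level. It meets the ascending sphere, which is parallel to $\beta_0^\bullet$, exactly along the circles $S^1_i=\beta_0^\bullet\cap R_0$, crossed with the sweep parameter. Each $S^1_i$ thus gives one circle of double points. Its group element is the loop formed by going out along $\delta^B_i$ to $p_i$ and back through the ball, i.e. $[\delta_i^B]\in\pi_1(M,\beta_0^\bullet)$. Its $\pi_2$ component is the sphere formed by the disk that $S_i^1$ cuts off on the sweeping sheet, namely $D_i$ rather than $D_i'$, since $D_i'$ contains the attaching region near $\gamma$. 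This disk is capped inside the contractible ball $\beta_0^\bullet$, which is exactly $[D_i^B]^{\delta_i^B}$ in $\pi_2(M,\beta_0^\bullet)$.

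The $\mathbb{Z}_2$ component should vanish. The normal framing of each double-point circle is induced from the product structure $S_i^1\times$(parameter), which is trivial because $R_0$ and $\beta_0^\bullet$ are oriented and meet transversally.

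\textbf{Main obstacle.} The hardest step is making the ``sweep'' description honest. We need to arrange that the $(n-1)$-handle's descending sphere in the middle level actually moves by a family whose trace meets $\beta_0^\bullet$ transversally in exactly $\bigsqcup S_i^1$, after finger-pushing so that $\operatorname{int}\beta_0^\bullet\cap\gamma=\emptyset$. We also need to check the sign and basepoint conventions, so that the $\pi_2$ class is $D_i$ with the orientation prescribed and not $D_i'$. For this we would model the standard barbell $\mathcal{B}_{2,n-2}$ first, where $k=1$ and $D_1$ is the trivial disk, then use naturality of $\Theta$ under the implantation $\beta$ to pass to $M$. Summing the contributions of the $k$ circles gives the stated formula.
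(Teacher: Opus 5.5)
There is a genuine gap, and it is the step you flagged as your ``main obstacle'': you never construct the $(n-2,n-1)$ eye. \cite[Proposition 2.6]{Budney_2025} only says that the barbell map is pseudo-isotopic to the identity. It does not give a loop of attaching $(n-2)$-spheres in $M\# S^{n-2}\times S^2$ whose time-one map is the barbell diffeomorphism.

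The paper obtains such a loop in two steps. First, the barbell map is the twin twist of $W=(R,S)$, i.e.\ a parameterised surgery of index $1$ on $S$. Half-unknottedness then gives an eye of $(1,2)$-handles: circles $\gamma_t$ sweep $T_R$ against the dotted ball $\beta_0^\bullet$. Second, the one-parameter dotted and 0-framed replacement (\Cref{lem:one-parameter-version-of-dotted-and-0-framed-replacement}, \Cref{prop:one-parameter-version-of-dotted-and-0-framed-replacement}) extends this family to $t\in[0,3]$ with spanning disks $\gamma_t^\bullet$ and swaps roles. After the swap, $\gamma^\bullet$ is the dotted $(n-2)$-handle and $\beta_t=\partial\beta_t^\bullet$ is the attaching sphere of the $(n-1)$-handle.

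Your fallback of computing in the standard model and pushing forward by naturality cannot replace this. The pseudo-isotopy needs the ball $\beta_0^\bullet$, which does not exist in $\mathcal{B}_{2,n-2}$, since neither cuff bounds a ball there. Moreover, the circles $\beta_0^\bullet\cap R_0$, possibly knotted and nested inside $\beta_0^\bullet$, are exactly what $\Theta$ records, so a $k=1$ model does not capture the general case.

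Without the explicit family, your identification of the double-point circles is heuristic, and the roles of the two spheres are swapped:
\begin{itemize}
\item the attaching sphere is $(n-2)$-dimensional, so it cannot ``trace out a copy of $R_0$'';
\item the belt sphere is a $2$-sphere, not something parallel to the $(n-1)$-ball $\beta_0^\bullet$;
\item ``$S_i^1$ crossed with the sweep parameter'' would be $2$-dimensional, not a circle.
\end{itemize}
In the paper, $A_t\cap B_t$ is a single point for $t\in[0,2]$. All circles appear for $t\in[2,3]$, where $\beta_t$ shrinks inside $\beta_2^\bullet=\phi_2(\beta_0^\bullet)$ against the fixed disk $\gamma_2^\bullet=\gamma_0^\bullet$. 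So the circles are $\beta_2^\bullet\cap\gamma_2^\bullet$. Pulling back by $\phi_2$, which fixes $*_0$, turns them into $\beta_0^\bullet\cap(\gamma_0^\bullet\ \text{tube}_\gamma\ R_0)=I\sqcup\bigsqcup_i S_i^1$. It is the disk $\gamma_1^\bullet$, not the attaching sphere, that contains $R_0$; this is why $S_i^1$ bounds $D_i$ on the $B$-side. The $A$-side data lies in the contractible ball $\beta_1^\bullet=\beta_0^\bullet$ and so contributes nothing.

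Finally, your argument for $s_i=0$ is invalid. Orientation and transversality do not force the two framings to agree: the paper's $(3,n-3)$ Hopf barbell has $R_0$ and $\beta_0^\bullet$ oriented and transverse, yet $s=1$. The correct reason is specific to $i=2$. The circle $S_i^1$ has codimension one in the disk $\gamma_2^\bullet$, and $\beta_2^\bullet$ has codimension one in $M$, so $\nu(S_i^1,\gamma_2^\bullet)=\nu(\beta_2^\bullet,M)|_{S_i^1}$ and hence $e_{i,A}=e_{i,B}$.
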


\begin{figure}[!ht]
    \centering
    \includegraphics[width=0.5\textwidth]{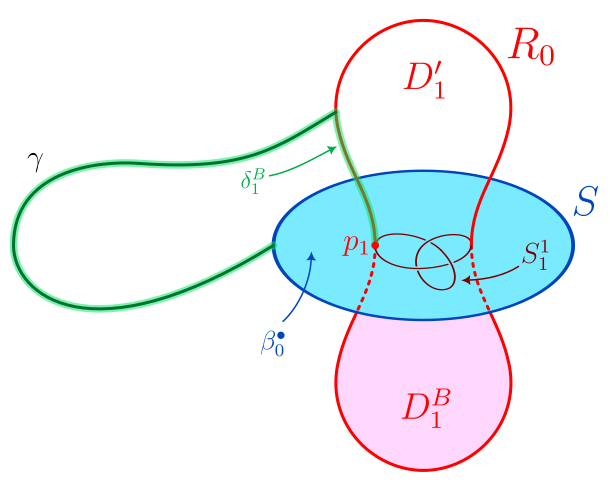}
    \caption{A general barbell and data needed to compute Hatcher-Wagoner invariants}
    \label{fig:general-barbell}
\end{figure}

\begin{remark}
    In particular, when the given barbell $\beta=(R_0,S,\gamma)$ with $S=\partial\beta_0^\bullet$ and $\gamma:[0,1]\to M,\gamma(0)=*_0=\gamma\cap S$ satisfies $R_0\cap \beta_0^\bullet=\emptyset$, suppose $\gamma|_{(0,1)}\cap \beta_0^\bullet=\{p_i\}_{i=1}^k$ with $p_i=\gamma(t_i)$, then the $f_\beta\in \pi_0\mathcal{P}$ we constructed in the theorem satisfies:$$\Sigma(f_\beta)=0, \Theta(f_\beta)=(0,[R_0^\gamma])\sum_{i=1}^k [\gamma_i]$$where $\gamma_i:=\gamma|_{[0,t_i]}\in \pi_1(M,\beta_0^\bullet)$ and $[R_0^\gamma]\in \pi_2(M, \beta_0^\bullet)$ is obtained by pulling $R_0$ back to $*_0$ along $\gamma$.
\end{remark}

\begin{remark} \label{rmk:involution}
    Along the way we also compute the Hatcher-Wagoner invariants for $g_\beta$: $\Sigma(g_\beta)=0, \Theta(g_\beta)=(-1)^{n+1}\bar{\Theta}(f_\beta)$, where $\bar{\cdot}$ is an involution on $\Wh_1(\pi_1 M, \mathbb{Z}_2\times \pi_2 M)$ which is described in both \cite{AST_1973__6__1_0} and \cite[section 9.1]{singh2022pseudoisotopiesdiffeomorphisms4manifolds}. We review the involution here:
    $$\bar{\cdot}: (n,\sigma)\cdot[\gamma] \mapsto (n+w_2^X(\sigma), -w_1^X(\gamma)\sigma^{\gamma^{-1}})\cdot [\gamma^{-1}],$$where $w_2^X(\sigma)\in \{0,1\}$ is the second Stiefel-Whitney class of the normal bundle of $\sigma$ in $X$, and $w_1^X(\gamma)\in \{-1,1\}$ is the first Stiefel-Whitney class of the normal bundle of $\gamma$ in $X$. To be careful that when we write $\bar{\Theta}(f_\beta)$, we first view $\Theta(f_\beta)\in Wh_1(\pi_1 M; \mathbb{Z}_2\times \pi_2 M)$, then apply the involution $\bar{\cdot}$ to it, and finally view $\bar{\Theta}(f_\beta)$ as an element in $\Wh_1(\pi_1 M; \mathbb{Z}_2\times \pi_2 M)/\chi(K_3\mathbb{Z}[\pi_1 M])$. We cannot view $\Theta(f_\beta)$ as an element in $\Wh_1(\pi_1 M, \mathbb{Z}_2\times \pi_2 M)/\chi(K_3\mathbb{Z}[\pi_1 M])$ directly and do the involusion $\bar{\cdot}$ on $\Wh_1(\pi_1 M, \mathbb{Z}_2\times \pi_2 M)/\chi(K_3\mathbb{Z}[\pi_1 M])$ since by \cite[Conjecture 9.7]{singh2022pseudoisotopiesdiffeomorphisms4manifolds} it is still a conjecture whether $\bar{\cdot}$ is an involusion on this quotient group, i.e. we don't know whether $\overline{\chi(K_3\Z [\pi_1 M])}=\chi(K_3\Z [\pi_1 M])$.
\end{remark}

\begin{remark}
    In particular, in dimension $n=4$, consider Budney and Gabai's examples $\delta_k$, then all $\{f_{\delta_k}\}_{k\in \mathbb{N}}$ in $\mathcal{P}(S^1\times D^3)$ lie in the kernel of $\Sigma$ and $\Theta$, this answers a question raised by Powell in \cite[Question 12.6]{powellmapping}. But $\{\delta_k, k\geq 4\}$ are nontrivial diffeomorphisms of $S^1\times D^3$, so $[f_{\delta_k}, k\geq 4]\neq 0\in \pi_0\mathcal{P}(S^1\times D^3)$, which equivalently means that $\{f_{\delta_k},k\geq 4\}$ are nontrivial pseudo-isotopies of $S^1\times D^3$ which can not be detected by Hatcher-Wagoner invariants.
\end{remark}

In dimension $n=4$, we generalize the calculation to \emph{half-unknotted immersed barbell diffeomorphisms} which will be defined in \cref{sec:generalizations-to-immersed-barbell-diffeomorphisms}. The result is exactly the same:

\begin{theorem}
    For a half-unknotted immersed barbell $\beta=(R_0,S,\gamma)$ with $\beta_0^\bullet: D^3\hookrightarrow M, S=\partial\beta_0^\bullet $, perturb self-intersections of $R_0$ away from $\beta_0^\bullet$. By finger-pushing $R_0$ along arc, we can make $\gamma$ short enough such that $\text{int}(\beta_0^\bullet)\cap \gamma =\emptyset$. Now suppose that $\beta_0^\bullet \cap R_0=\bigsqcup _{i=1}^k S_i^1$. Choose $p_i\in S_i^1$ and let $*_0=\gamma\cap S$ be the base point. For each $i$, find a path $\delta^B_i\subset \beta=(R_0, S,\gamma)$ which is a path from $*_0$ to $p_i\in S_i^1$. Also, for each $i$, $S_i^1$ divides $R_0$ into two immersed disks $D_i$ and $D_i'$, where $D_i'$ is the one connected to the arc $\gamma$. Let $D^B_i=D_i$. Then the $f_\beta\in \pi_0\mathcal{P}$ we constructed which results in the immersed barbell diffeomorphism with respect to $\beta$ satisfies:
    
    $$\Theta(f_\beta)=\sum_{i=1,...,k} (0, [D^B_i]^{\delta_i^B})\cdot  [\delta_i^B]$$
    Here we identify $\pi_i(M,*_0)$ with $\pi_i(M, \beta_0^\bullet)$ so that $[D_i]\in \pi_2(M,\beta_0^\bullet)$ and $\delta_i^B\in \pi_1(M,\beta_0^\bullet)$.    
\end{theorem}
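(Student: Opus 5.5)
The plan is to reduce the immersed case to the embedded computation of the first theorem. The point is that nothing in that computation sees the self-intersections of $R_0$ once they have been moved away from $\beta_0^\bullet$.

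\textbf{Step 1: construct $f_\beta$.} Recall the construction for embedded barbells. Since $S=\partial\beta_0^\bullet$ is unknotted, the barbell diffeomorphism is realized by a one-parameter family. In this family, a cancelling $(n-2,n-1)=(2,3)$-handle pair is born near $\beta_0^\bullet$, and the attaching sphere of the $2$-handle is dragged around $R_0$, which traces the sphere $R_0$ via the arc $\gamma$. The pair then dies. For an immersed $R_0$ the same recipe works: we isotope the $2$-handle's attaching circle along a generic homotopy tracing $R_0$ rather than an isotopy. In dimension $4$ the intermediate level sets are $3$-dimensional, so the relevant motion of the attaching circle in $M\times\{t\}$ can be taken to be an isotopy; the double points of $R_0$ only affect where the circle sweeps, not its embeddedness at each time. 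This gives a Cerf diagram with a single eye, and $f_\beta$ results in the immersed barbell diffeomorphism. This matches the definition of immersed barbell diffeomorphism in \cref{sec:generalizations-to-immersed-barbell-diffeomorphisms}.

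\textbf{Step 2: compute $\Theta$ from the eye.} For a single eye with $\Sigma=0$, Hatcher--Wagoner's recipe reads $\Theta$ off the intersections of the ascending sphere of the $3$-handle (essentially $\partial$-parallel to $\beta_0^\bullet$) with the descending $2$-spheres in the middle level. These intersections occur in $1$-parameter families. Generically they are finitely many circles of intersection appearing in the time-parametrized trace, each contributing a term $(\epsilon,\sigma)\cdot[g]$. Here $g$ is the group element given by the double-point loop and $\sigma$ the $\pi_2$ class of the capped-off disk. As in the embedded proof, these circles correspond bijectively to the components $S_i^1$ of $\beta_0^\bullet\cap R_0$. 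The group element is $[\delta_i^B]$, and the $\pi_2$-component is the disk $D_i$ cut off by $S_i^1$, transported along $\delta_i^B$. The $\mathbb{Z}_2$-component vanishes because the framing data agree with the embedded case locally near $\beta_0^\bullet$.

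\textbf{Step 3: locality.} The key claim is that every quantity in Step 2 is computed in a neighborhood of $\beta_0^\bullet\cup\gamma$ together with the homotopy classes of the disks $D_i$. Self-intersections of $R_0$ lie away from $\beta_0^\bullet$, so they create no new intersection circles with the ascending sphere, and they affect the $D_i$ only through homotopy classes $[D_i]\in\pi_2(M,\beta_0^\bullet)$, which are well defined for immersed disks. Hence the formula of the first theorem carries over verbatim.

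\textbf{Main obstacle.} The hardest step is Step 1 with the $\mathbb{Z}_2$-part of Step 2. We must check that an immersed trace still yields a genuine pseudo-isotopy whose eye has no extra handle-slide or framing twists. Passing through a double point could a priori insert a Whitney-type twist contributing a $(1,\cdot)$-term or a $w_2$ correction. To handle this, I would resolve each double point by a local finger move supported far from $\beta_0^\bullet$. I would then show that this changes $f_\beta$ only by a pseudo-isotopy supported in a ball disjoint from the eye's intersection data, which contributes trivially to $\Theta$.
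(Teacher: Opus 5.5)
\textbf{Verdict: there is a genuine gap.} It sits exactly where the immersed case differs from the embedded one.

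\textbf{Where the argument fails.} Your Step 1 describes $f_\beta$ as dragging a $2$-handle's attaching circle around $R_0$. That is the $(1,2)$-eye $g_\beta$, not $f_\beta$. (Also, the level sets of $M^4\times I$ are $4$-dimensional, and that is why generic families of circles in them are isotopies.) To obtain the $(2,3)$-eye you must apply the one-parameter dotted/0-framed replacement. Its input is a family of \emph{embedded} disks $\gamma_t^\bullet$ with $\partial\gamma_t^\bullet=\gamma_t$.

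Let $p=\gamma_{t_0}(w)=\gamma_{t_1}(v)$ be a double point of $T_R$. The swept disk $\gamma_0^\bullet\cup\bigcup_{s\le t}\gamma_s$ stops being embedded once $t$ reaches $t_1$. To repair it, one finger-pushes the small disk of $\gamma_t^\bullet$ at $p$ ahead of the moving circle, along $l_p=\bigcup_{t\in[t_1,1]}\gamma_t(v)$, for the rest of the loop. At the end it is capped off in the normal sphere of $\gamma_1$.

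This modification is not local. The path $l_p$ runs across $T_R$ and in general crosses the circles $S_i^1$. The crossing is forced whenever the point $(t_1,v)$ lies in some $D_i$, because $\gamma_1$ is disjoint from $D_i$. This happens, for example, when the only circle $S_1^1$ comes from the finger along $\gamma$, so that every double point lies in $D_1$. At each crossing, the tube $SN(T_R)|_{l_p}$ meets $\beta_0^\bullet$ in a meridian circle of $S_i^1$.

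So $\gamma_1^\bullet\cap\beta_0^\bullet$ contains circles beyond the $S_i^1$. Your Step 2 claim of a bijection with the $S_i^1$ is therefore false, and so is your Step 3 claim that the double points create no new intersection circles.

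\textbf{What is missing.} You need to show that these extra circles contribute nothing.
\begin{itemize}
\item Take a meridian $SN(T_R)|_{x_m}$ with $x_m\in l_p$. Its $B$-side disk in $\gamma_1^\bullet$ is $SN(T_R)$ over the part of $l_p$ after $x_m$, capped by a hemisphere of the normal sphere of $\gamma_1$. Together with the normal disk $DN(T_R)|_{x_m}\subset\beta_0^\bullet$, it bounds the corresponding normal $3$-ball. Hence its $\pi_2$-component is $0$.
\item Its $\mathbb{Z}_2$-component is $0$ by the same codimension-one identification $\nu(S^1,\gamma_2^\bullet)=\nu(\beta_2^\bullet,M)|_{S^1}$ that kills every $s_i$ in the $(n-2,n-1)$ case.
\item So each extra circle contributes $(0,0)\cdot[\alpha]=0$.
\item You must also check that the fingers do not change $[D_i^B]\in\pi_2(M,\beta_0^\bullet)$.
\end{itemize}

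\textbf{Why your proposed remedy does not supply this.} Double points of an immersed $2$-sphere in a $4$-manifold cannot be removed by a local finger move. Finger moves create double points, and $R_0$ may have no framed embedded representative at all, which is the whole reason immersed barbells are introduced. The modification actually needed is part of the construction of $f_\beta$ and runs through $\beta_0^\bullet$. It is not a pseudo-isotopy supported in a ball away from the eye's data.

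Conversely, the Whitney-twist or $w_2$ worry about the $\mathbb{Z}_2$-part is not the real obstacle. The codimension-one argument applies circle by circle, regardless of immersion.
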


From the theorems we deduce the \emph{barbell realization theorem} in dimension 4: 
\begin{theorem} \label{thm:barbell-realization-dim-4}
    For a 4-dimensional manifold $M$, given any $(s,\sigma)\cdot[\gamma]\in \Wh_1(\pi_1 M; \mathbb{Z}_2\times \pi_2 M)$ with $s=0$ or $w_2^M(\sigma)\neq 0$,

    \begin{enumerate}[label=\em(\arabic*)]
        \item If $s=0$, there exists a half-unknotted immersed barbell $\beta$ and $f_\beta\in \ker\Sigma\subset \pi_0\mathcal{P}$ resulting in the immersed barbell diffeomorphism with respect to $\beta$, whose Cerf diagram contains a single eye of (2,3)-handle pair, satisfying $\Theta(f_\beta)=(s,\sigma)\cdot[\gamma]$.
        \item If $s=1$ and $w_2^M(\sigma)=1$, there exists a half-unknotted immersed barbell $\beta$ and $g_\beta\in \ker\Sigma\subset \pi_0\mathcal{P}$ resulting in the immersed barbell diffeomorphism with respect to $\beta$, whose Cerf diagram contains a single eye of (1,2)-handle pair, satisfying $\Theta(g_\beta)=(s,\sigma)\cdot[\gamma]$. \qedhere
    \end{enumerate}
\end{theorem}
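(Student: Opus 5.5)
We construct a barbell directly from the target data and then apply the immersed-barbell version of the main theorem, together with \Cref{rmk:involution} for part (2).

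\textbf{Part (1).} Fix a small embedded $3$-ball $\beta_0^\bullet$ near the base point, with $S=\partial\beta_0^\bullet$ unknotted, and let $*_0\in S$. Represent $\gamma\in\pi_1 M$ by an embedded loop based near $*_0$. Represent $\sigma\in\pi_2 M$ by a generically immersed sphere $R_0$, whose self-intersections are double points. Run a short arc from $*_0$ to $R_0$, so that $(R_0,S,\text{arc})$ is a half-unknotted immersed barbell.

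Next, modify $R_0$ so that it meets $\beta_0^\bullet$ in exactly one circle $S_1^1$, arranged as follows. Take a small disk $D\subset R_0$ away from the arc. Tube it along the loop $\gamma$ so that it passes once, transversally, through $\beta_0^\bullet$. In other words, finger-push a small disk of $R_0$ along a representative of $\gamma$ until it pierces $\beta_0^\bullet$. Then $S_1^1$ splits $R_0$ into two pieces:
\begin{itemize}[label={}]
    \item \emph{$D_1$ (far side).} We want its class $[D_1]^{\delta_1}$ to equal $\sigma$.
    \item \emph{$D_1'$ (side containing the arc).} This is a small disk.
\end{itemize}
For this we push the part of the sphere carrying $\sigma$ through the ball, rather than the small disk. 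Equivalently, let $R_0$ be the connected sum of a small unknotted sphere near $*_0$ with $\sigma$, taken along a tube that runs along $\gamma$ and crosses $\beta_0^\bullet$ once.

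With the path $\delta_1^B$ from $*_0$ to $p_1$ running along the arc and the tube, $[\delta_1^B]=\gamma$. The immersed-barbell theorem then gives $\Theta(f_\beta)=(0,\sigma)\cdot[\gamma]$. The barbell stays half-unknotted, and $\Sigma(f_\beta)=0$ because the Cerf diagram has a single eye with one intersection.

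\textbf{Part (2), $s=1$ and $w_2^M(\sigma)=1$.} Use $g_\beta$. By \Cref{rmk:involution} with $n=4$, $\Theta(g_\beta)=-\overline{\Theta(f_\beta)}$. If $\Theta(f_\beta)=(0,\sigma')\cdot[\gamma']$, then
\[
\overline{\Theta(f_\beta)}=(w_2(\sigma'),\,-w_1(\gamma')\sigma'^{\gamma'^{-1}})\cdot[\gamma'^{-1}].
\]
Since the $\mathbb{Z}_2$ coordinate satisfies $-1=1$, we get
\[
\Theta(g_\beta)=(w_2(\sigma'),\,w_1(\gamma')\sigma'^{\gamma'^{-1}})\cdot[\gamma'^{-1}].
\]
Choose $\gamma'=\gamma^{-1}$ and $\sigma'=w_1(\gamma^{-1})\,\sigma^{\gamma^{-1}}$, so that $\sigma'^{\gamma}=w_1(\gamma)\sigma$ and hence $w_1(\gamma')\sigma'^{\gamma'^{-1}}=\sigma$. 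Then $w_2(\sigma')=w_2(\sigma)=1$, because $w_2$ is invariant under the $\pi_1$-action and under sign. Therefore $\Theta(g_\beta)=(1,\sigma)\cdot[\gamma]$, and we realize $(0,\sigma')\cdot[\gamma']$ by part (1).

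\textbf{Main obstacle.} The delicate step is the geometric construction in part (1): checking that the tubed immersed sphere meets $\beta_0^\bullet$ in a single circle, and that the based class of $D_1$ relative to the chosen path is exactly $\sigma$ and not a conjugate or a sign variant. This is a bookkeeping matter of orientations and of the identification $\pi_2(M,\beta_0^\bullet)\cong\pi_2(M,*_0)$. The $\pi_1$-action ambiguity in that identification is absorbed by the relation $\alpha\cdot[\sigma]=\alpha^\tau\cdot[\tau\sigma\tau^{-1}]$ in $\Wh_1$.
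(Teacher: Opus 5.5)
There is a genuine gap in part (1): you never arrange for $R_0$ to be \emph{framed}, and for odd classes this is impossible. An immersed barbell requires a framed immersed $2$-sphere $R_0$. For an immersed sphere in $M^4$ representing $\sigma$, the normal $2$-plane bundle $\nu$ satisfies $w_2(\nu)=w_2^M(\sigma)$, so the normal Euler number has parity $w_2^M(\sigma)$. Interior twists change that Euler number by $\pm 2$. Hence a framed immersed representative exists if and only if $w_2^M(\sigma)=0$, and even then you must add such twists, which you do not mention.

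In your construction the near side $D_1'$ comes from a null-homotopic sphere, so $R_0$ is homotopic to a conjugate of $\sigma$. When $w_2^M(\sigma)=1$, the triple $(R_0,S,\text{arc})$ is therefore not a barbell. Equivalently: if $R_0$ is framed and $S_1^1$ cuts it into $D_1\cup D_1'$, the two capped-off spheres have the same $w_2$, so with $D_1'$ trivial the class $[D_1]$ is forced to be even.

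Since $s=0$ places no restriction on $\sigma$, part (1) misses every odd $\sigma$. Part (2) then fails completely, because it needs $(0,\sigma')\cdot[\gamma']$ with $w_2^M(\sigma')=1$, which is exactly what your construction cannot produce. Your involution algebra in part (2) is itself fine. For even $\sigma$, once twists are added, your construction is essentially the paper's \Cref{cor:realize-all-elements-with-half-unknotted-immersed-barbell}.

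The missing idea is in \Cref{cor:realize-all-elements-with-half-unknotted-immersed-barbell-even-odd}: put an \emph{odd} class on the near side rather than a trivial one. This works because only the far disk $D_1$ enters the $\pi_2$-component of $\Theta$. The construction goes as follows.
\begin{enumerate}
\item For $w_2^M(\sigma)=1$, pick $\tau$ with $w_2^M(\tau)=1$ (for instance $\tau=\sigma$).
\item Tube an immersed $R_\sigma$ to an immersed $R_\tau$ along an arc $\gamma_0$. The resulting sphere has $w_2=0$, so interior twists make it a framed immersed sphere.
\item Let $\beta_0^\bullet$ be a small normal $3$-disk to the tube, and run the barbell arc from $S$ to the $R_\tau$ side.
\end{enumerate}
Then $R_0\cap\beta_0^\bullet$ is a single circle, $D_1'$ is the $R_\tau$ side and $D_1$ is the $R_\sigma$ side. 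After choosing $\gamma_0$ and the arc suitably, the immersed-barbell formula gives $\Theta(f_\beta)=(0,\sigma)\cdot[\alpha]$ for any prescribed $\alpha$. With this in place, your part (2) goes through as written.
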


In particular, it covers all second Hatcher-Wagoner invariants realizable in Singh's paper \cite{singh2022pseudoisotopiesdiffeomorphisms4manifolds}.

\Cref{thm:barbell-realization-dim-4} has a parallel version for dimension $n\geq 5$:

\begin{theorem} \label{thm:barbell-realization-dim-n}
    For a $n$-dimensional manifold $M$ with $n\geq 5$, given any $(s,\sigma)\cdot[\gamma]\in \Wh_1(\pi_1 M; \mathbb{Z}_2\times \pi_2 M)$ with $s=0$ or $w_2^M(\sigma)\neq 0$,

    \begin{enumerate}[label=\em(\arabic*)]
        \item If $s=0$, there exists an implanted barbell $\beta=\beta_{2,n-2}$ with the $(n-2)$-sphere unknotted in $M$ and $f_\beta\in \ker\Sigma\subset \pi_0\mathcal{P}$ resulting in the implanted barbell diffeomorphism with respect to $\beta$, whose Cerf diagram contains a single eye of $(n-2,n-1)$-handle pair, satisfying $\Theta(f_\beta)=(s,\sigma)\cdot[\gamma]$.
        \item If $s=1$ and $w_2^M(\sigma)=1$, there exists an implanted barbell $\beta=\beta_{2,n-2}$ and $g_\beta\in \ker\Sigma\subset \pi_0\mathcal{P}$ resulting in the implanted barbell diffeomorphism with respect to $\beta$, whose Cerf diagram contains a single eye of $(1,2)$-handle pair, satisfying $\Theta(g_\beta)=(s,\sigma)\cdot[\gamma]$. \qedhere
    \end{enumerate}
\end{theorem}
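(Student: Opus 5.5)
We realize the prescribed element by building a half-unknotted $(2,n-2)$-barbell whose data in the first theorem produce exactly one term $(0,\sigma)\cdot[\gamma]$. For (1) we use the special case in the remark following the first theorem, where $R_0\cap\beta_0^\bullet=\emptyset$ and $\Theta(f_\beta)=(0,[R_0^\gamma])\sum_i[\gamma_i]$.

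\textbf{Case (1), $s=0$.} Fix a small embedded disk $\beta_0^\bullet:D^{n-1}\hookrightarrow M$ with $S=\partial\beta_0^\bullet$, so $S$ is unknotted, and a base point $*_0\in S$. Choose a loop based at $*_0$ representing $\gamma\in\pi_1M$. Choose the arc $\gamma'$ of the barbell so that it leaves $S$ outward, follows the loop, crosses $\beta_0^\bullet$ transversely exactly once at an interior point $p_1=\gamma'(t_1)$, and then continues a short distance to the far end. This gives $\gamma_1=\gamma'|_{[0,t_1]}$ representing $\gamma$, possibly after a small homotopy near the end.

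At the far end we attach a $2$-sphere $R_0$ representing $\sigma$ after transport back along $\gamma'$. Since $n\ge5$, a generic map $S^2\to M$ is an embedding. We push it off the small disk $\beta_0^\bullet$ and off $\gamma'$, which is possible by general position, since $2+(n-1)-n=1$ would give an intersection curve, but we may instead first homotope $R_0$ into the complement of a neighborhood of the disk, which has the same $\pi_2$ for $n\ge5$. We then tube $R_0$ to the end of $\gamma'$.

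The orientation and sign conventions, and the check that the normal framing of the $(2,n-2)$-barbell neighborhood exists, require some care. The framing is automatic here because $S$ bounds a disk, and the $S^2\times D^{n-2}$ neighborhood of $R_0$ exists only when the normal bundle of $R_0$ is trivial. When $w_2^M(\sigma)\ne0$ the normal bundle is nontrivial, and this is the main obstacle. There I would try replacing $R_0$ by $R_0\#$ (a local sphere), or adjusting by the fact that only the class in $\pi_2$ matters. If that fails, I would realize $(0,\sigma)\cdot[\gamma]$ as a sum of two terms with trivial-normal-bundle spheres, for example $\sigma=(\sigma+\tau)-\tau$, and compose two standard barbell pseudo-isotopies, using additivity of $\Theta$. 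The remark then gives $\Theta(f_\beta)=(0,\sigma)\cdot[\gamma]$, and $\Sigma(f_\beta)=0$ together with the single $(n-2,n-1)$ eye come from the first theorem.

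\textbf{Case (2), $s=1$, $w_2^M(\sigma)=1$.} We use \Cref{rmk:involution}: $\Theta(g_\beta)=(-1)^{n+1}\bar\Theta(f_\beta)$. Applying case (1) with target $(0,\sigma')\cdot[\gamma']$ and involution $(0,\sigma')[\gamma']\mapsto(w_2(\sigma'),-w_1(\gamma')\sigma'^{\gamma'^{-1}})[\gamma'^{-1}]$, we choose $\gamma'=\gamma^{-1}$ and $\sigma'=\mp w_1(\gamma)\,\sigma^{\gamma}$, so the output is $(w_2(\sigma),\sigma)[\gamma]$ up to the sign $(-1)^{n+1}$. Then $w_2(\sigma')=w_2(\sigma)=1=s$, because $w_2$ is invariant under the $\pi_1$-action and under sign. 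The sign $(-1)^{n+1}$ on the $\mathbb{Z}_2$ part is irrelevant, and on the $\pi_2$ part it is absorbed into the choice of $\sigma'$. Hence $g_\beta$ for this barbell has $\Theta(g_\beta)=(1,\sigma)[\gamma]$, $\Sigma=0$, and a single $(1,2)$ eye. The main obstacle I expect is the same normal-bundle and framing issue as in case (1). In case (2) it is unavoidable because $w_2(\sigma)=1$, so I would check carefully whether the paper's barbells permit a twisted $D^{n-2}$-bundle over $R_0$, or else whether $S$ can carry the twist instead.
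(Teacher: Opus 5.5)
For $s=0$ and $w_2^M(\sigma)=0$, your construction is exactly the paper's (\Cref{cor:realize-all-elements-with-half-unknotted-immersed-barbell}). You use a small $\beta_0^\bullet$ disjoint from an embedded framed $R_0$, let the arc cross $\beta_0^\bullet$ once, and apply the remark after the first theorem. The gap is the odd case. The statement needs it in (1), where $\sigma$ is arbitrary, and in all of (2).

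In the configuration $R_0\cap\beta_0^\bullet=\emptyset$, the $\pi_2$-coefficient is the class of the whole sphere $R_0$. For $n\ge5$ an embedded $S^2$ has trivial normal bundle iff $w_2^M$ vanishes on it. So this configuration can only produce even coefficients, and none of your fallbacks repairs this:
\begin{itemize}
\item A local connected sum does not change the homotopy class, hence not $w_2^M$.
\item In $\sigma=(\sigma+\tau)-\tau$ the summand $-\tau$ is odd again. Since $w_2^M:\pi_2M\to\mathbb{Z}_2$ is additive and $\pi_1$-invariant, every splitting of $(0,\sigma)\cdot[\gamma]$ into terms $(0,\sigma_j)\cdot[\gamma]$ contains an odd $\sigma_j$.
\item A composite of two barbell pseudo-isotopies is not a single $f_\beta$ with a single eye, which is what (1) asserts.
\item A twisted normal bundle on $R_0$ is not available, since a barbell needs framed spheres. $S$ bounds a disk, so it cannot carry a twist either.
\end{itemize}
Case (2) inherits the problem. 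The involution puts $w_2^M(\sigma')$ into the $\mathbb{Z}_2$-slot, so starting from $\Theta(f_\beta)=(0,\sigma')\cdot[\gamma']$ with $\sigma'$ even you always get $s=0$.

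The missing idea is that in the main theorem the $\pi_2$-coefficient is $[D_i]$, not all of $R_0$. Here $D_i$ is the piece of $R_0$ cut off by $S_i^1$ on the side away from the arc (the paper's \Cref{cor:realize-all-elements-with-half-unknotted-immersed-barbell-even-odd}). The construction goes as follows:
\begin{itemize}
\item Given odd $\sigma$, take any odd $\tau$ (e.g.\ $\tau=\sigma$). Tube spheres representing $\sigma$ and $\tau$ together along an arc $\gamma_0$. The result represents an even class, so for $n\ge5$ it can be taken to be a framed embedded $R_0$.
\item Let $\beta_0^\bullet$ be a small normal $(n-1)$-disk to $\gamma_0$, so that $R_0\cap\beta_0^\bullet$ is a single meridian circle of the tube.
\item Run the barbell arc from $*_0\in S$ to the $\tau$-side.
\end{itemize}
Then $k=1$ and $D_1$ is the $\sigma$-side, so the theorem gives $\Theta(f_\beta)=(0,\sigma)\cdot[\alpha]$, with $\alpha$ (and the basing of $\sigma$) adjusted by the choice of the arcs. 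This settles (1) for odd $\sigma$. Feeding such an $f_\beta$, which has an odd $\pi_2$-coefficient, into $\Theta(g_\beta)=(-1)^{n+1}\bar\Theta(f_\beta)$ with your choice of $\gamma'$ and $\sigma'$ then gives (2).
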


But as we mentioned above, Hatcher and Wagoner showed that the second Hatcher-Wagoner invariants $\Theta: \ker\Sigma\to \Wh_1(\pi_1 M; \mathbb{Z}_2\times \pi_2 M)/\chi(K_3\mathbb{Z}[\pi_1 M])$ is a surjection for $n\geq 5$ and bijection for $n\geq 6$. To give a refinement of that result, for any $\gamma\in \pi_1 M$, we construct a specific half-unknotted implanted $(3,n-3)$-barbell for $n\geq 6$ and calculate its Hatcher-Wagoner invariants:

\begin{proposition}
    Given a $n$-dimensional manifold $M$ with $n\geq 6$, for every $\gamma\in \pi_1 M$, there is a half-unknotted $(3,n-3)$-barbell $\beta=(R_0, S,\gamma)$ and a pseudo-isotopy $f_\beta\in \ker\Sigma$ resulting in the barbell diffeomorphism with respect to $\beta$, whose Cerf diagram contains a single eye of $(n-3,n-2)$-handle pair, satisfying that $\Theta(f_\beta)=(1,0)\cdot[\gamma]$.

\end{proposition}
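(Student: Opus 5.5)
We will mirror the construction behind the $(2,n-2)$ formula, but now work one dimension up. Fix $\gamma\in\pi_1 M$, take $S=\partial\beta_0^\bullet$ to be the boundary of a small embedded $(n-2)$-ball, so that $S^{n-3}$ is unknotted. We then build $f_\beta$ exactly as for the standard barbell pseudo-isotopies: a single eye of $(n-3,n-2)$-handle pairs. The $(n-3)$-handle is attached along a trivial sphere determined by $S$. The $(n-2)$-handle is attached along a sphere obtained by tubing with the $3$-sphere $R_0$ along the bar $\gamma$. This gives $\Sigma(f_\beta)=0$ automatically, since a single eye with one handle pair has trivial $\Wh_2$ data (the incidence matrix at the birth and death is the $1\times 1$ identity). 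The second invariant $\Theta$ is then read off from the one-parameter family of intersections between the attaching sphere of the $(n-2)$-handle and the belt sphere of the $(n-3)$-handle. In the middle level these are $2$-dimensional ($3$-dimensional and $(n-3+1)$-dimensional in $M$ after the usual count, with $(n-2)+3-(n-2)-1$ excess). Following Hatcher--Wagoner, they form a framed $1$-manifold of double-point circles in the $1$-parameter family.

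As in the $(2,n-2)$ proof, $\Theta$ is computed as a sum over components $S_i$ of $R_0\cap\beta_0^\bullet$, each contributing $(\epsilon_i,\sigma_i)\cdot[\delta_i]$. Here $\delta_i$ is the group element given by the path in $\beta$ from $*_0$ to $S_i$. The $\pi_2$-coordinate $\sigma_i$ is the class of the disk that $S_i$ bounds in $R_0$, and the $\mathbb{Z}_2$-coordinate is the framing/spin data of the circle of intersections. For $i=3$, $R_0\cap\beta_0^\bullet$ is a closed surface in $R_0\cong S^3$ (dimensions $3+(n-2)-n=1$ in $\beta_0^\bullet$, i.e.\ a $2$-manifold after taking the double-point locus). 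The relevant $\pi_2$ class is now the class of the $2$-sphere itself. Its $\mathbb{Z}_2$ part measures the framing of the $1$-dimensional double-point circles, which is exactly the $\pi_1 SO$ term in Hatcher--Wagoner's $\Wh_1(\pi_1;\mathbb{Z}_2\times\pi_2)$.

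Concretely, we take $R_0$ to be a $3$-sphere obtained as follows. Start from a small unknotted $S^3$ near $*_0$, then finger-push it along a loop representing $\gamma$ through $\beta_0^\bullet$. We arrange the intersection with $\beta_0^\bullet$ to be a single $2$-sphere component $\Sigma_1$ (or a torus) bounding a $3$-ball in $R_0$. That ball is null in $\pi_2$, so $\sigma=0$, and $\delta_1=\gamma$. We choose the finger move with a nontrivial twist in the normal framing, using the generator of $\pi_1 SO(n-3)=\mathbb{Z}_2$, available since $n\ge 6$. This makes the framing invariant of the resulting double-point circle equal to $1$. The result is $\Theta(f_\beta)=(1,0)\cdot[\gamma]$. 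We also check that $R_0$ stays embedded and that the twist does not alter the barbell diffeomorphism's isotopy class of construction, which is needed so that $f_\beta$ still results in $F_\beta$. This uses codimension $n-3\ge 3$ to remove self-intersections.

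The main obstacle is the $\mathbb{Z}_2$-coordinate. We must identify precisely how the framing twist in the finger move translates into the Hatcher--Wagoner $\mathbb{Z}_2$ invariant of the eye. To do this, we would follow Hatcher--Wagoner's (or Singh's §9) description of $\Theta$ for a single eye via the framed bordism class of the double-point circle. We would compute that a framing differing by the generator of $\pi_1 SO$ contributes $1\in\mathbb{Z}_2$. As a sanity check, we would compare with \Cref{rmk:involution}: $g_\beta$ should have the conjugate value, and $\overline{(1,0)[\gamma]}=(1,0)[\gamma^{-1}]$.
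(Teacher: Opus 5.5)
There is a genuine gap, and it sits exactly at the $\mathbb{Z}_2$-coordinate you flag as the main obstacle. First a dimension slip: $R_0\cong S^3$ and $\beta_0^\bullet\cong D^{n-2}$ meet transversely in $M^n$ in a $1$-manifold. So $R_0\cap\beta_0^\bullet$ is a union of circles, never a $2$-sphere or a torus.

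For a single circle $C$, the recipe of the $(2,n-2)$ computation, pulled back by $\phi_2$ to $\beta_1^\bullet=\beta_0^\bullet$ and $\gamma_1^\bullet=\gamma_0^\bullet\text{ tube}_\gamma R_0$, gives $s=e_A-e_B \bmod 2$. Here $e_A$ is the framing of $\nu(C,R_0)=\nu(\beta_0^\bullet,M)|_C$ coming from the normal bundle of the disk $\beta_0^\bullet$, and $e_B$ is the Seifert framing of $C$ in $R_0$. The relevant group is $\pi_1 SO(2)\to\pi_1 SO(3)$, not $\pi_1 SO(n-3)$. So $s$ is not a parameter you can dial in by twisting the finger. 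Reframing $R_0$ (an element of $\pi_3 SO(n-3)$) enters neither $e_A$ nor $e_B$.

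In fact, for your sphere $s$ is forced to vanish. The finger only crosses the interior of $\beta_0^\bullet$ and stays in a thin neighbourhood of its core path, which misses $S=\partial\beta_0^\bullet$. So retracting the finger is an isotopy of $R_0$ in $M\setminus S$ to a tiny sphere disjoint from $\beta_0^\bullet$. The preimage of $\beta_0^\bullet$ under this isotopy $S^3\times I\to M$ is a framed surface in $S^3\times I$, giving a framed null-cobordism of $(C,e_A)$. The Pontryagin invariant in $\pi_3(S^2)\cong\mathbb{Z}$ of a framed knot in $S^3$ is its framing relative to the Seifert framing. Hence $e_A=e_B$ and $s=0$, however the finger is rotated along the way, and your $\beta$ gives $\Theta(f_\beta)=(0,0)\cdot[\gamma]=0$.

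The missing idea is that $s$ is, mod $2$, a Hopf invariant. You need $R_0$ to wrap around $S$ so that, in the local model $\mathbb{R}^n\setminus S\simeq S^2$, it represents an odd element of $\pi_3(S^2)$. The paper achieves this as follows:
\begin{itemize}
\item Take a standard $S^2\subset\mathbb{R}^3\subset\mathbb{R}^6\subset M$ and split its trivial rank-$4$ normal bundle as $E\oplus E_1$, with $E$ the Hopf bundle ($e(E)=1$, $e(E_1)=-1$, so $w_2=0$).
\item Set $R_0=\partial D(E)$, fiberwise embedded in $\nu S^2$, and let $\beta_0^\bullet$ be a normal $(n-2)$-disk of $S^2$ at $p$.
\item Then $C$ is the Hopf fibre over $p$, and $e_A$ is the horizontal lift of a vector in $T_pS^2$. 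Its push-off is a neighbouring fibre, which links $C$ once, so $e_A-e_B=1$.
\end{itemize}
This is also where $n\geq 6$ genuinely enters: the fiberwise embedding needs a rank-$4$ normal bundle, and the paper notes that no such embedded Hopf sphere exists for $n=5$. The $\pi_2$-coordinate is then $0$ because everything lives in a ball plus the bar. Choosing the bar to represent $\gamma$ gives the $\pi_1$-coordinate $[\gamma]$, as you anticipated.
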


Note that $\{(0,\sigma)\cdot[\gamma], (0,1)\cdot[\gamma]|\sigma\in \pi_2 M, \gamma\in \pi_1M\}$ generates $\Wh_1(\pi_1 M; \mathbb{Z}_2\times \pi_2M)$. Finally we obtain the \emph{barbell realization theorem} for dimension $n\geq 6$ as follows:

\begin{theorem} \label{thm:diffeomorphism-decomposed-by-barbells}
    For a $n$-dimensional manifold $M$ with $n\geq 6$,

    \begin{enumerate}[label=\em(\arabic*)]
        \item For any pseudo-isotopy $f$ of $M$ with $\Sigma(f)=0$, there is a series $\beta_j, j=1,...,m$ of half-unknotted implanted $(2,n-2)$-barbells or half-unknotted implanted $(3,n-3)$-barbells such that $f$ is isotopic to $f_{\beta_1}\circ ...\circ f_{\beta_m}$.
        \item For any diffeomorphism $F\in \Diff_{PI}(M,\partial)$ such that there exists a pseudo-isotopy $f\in \pi_0\mathcal{P}$ resulting in $F$ with $\Sigma(f)=0$, $F$ is isotopic to a finite composition of half-unknotted $(2,n-2)$-barbell diffeomorphisms and half-unknotted $(3,n-3)$-barbell diffeomorphisms. \qedhere
    \end{enumerate}
\end{theorem}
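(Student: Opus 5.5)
The plan is to combine the Hatcher--Wagoner injectivity of $\Theta$ on $\ker\Sigma$ for $n\geq 6$ with the realization results for $(2,n-2)$- and $(3,n-3)$-barbells stated above.

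\textbf{Part (1): pseudo-isotopies.} Let $f$ be a pseudo-isotopy with $\Sigma(f)=0$, and write $\Theta(f)\in \Wh_1(\pi_1 M;\mathbb{Z}_2\times\pi_2 M)/\chi(K_3\mathbb{Z}[\pi_1 M])$.

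1. Lift $\Theta(f)$ to an element of $\Wh_1(\pi_1 M;\mathbb{Z}_2\times \pi_2 M)$. By the remark preceding the theorem, this lift can be written as a finite sum $\sum_j x_j$. Each $x_j$ is either $(0,\sigma_j)\cdot[\gamma_j]$ or $(1,0)\cdot[\gamma_j]$.

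2. For a term $(0,\sigma_j)\cdot[\gamma_j]$, part (1) of \Cref{thm:barbell-realization-dim-n} (case $s=0$) gives a half-unknotted implanted $(2,n-2)$-barbell $\beta_j$ with $f_{\beta_j}\in\ker\Sigma$ and $\Theta(f_{\beta_j})=x_j$. Concretely, one uses the Remark after Theorem 1: take $R_0$ representing $\sigma_j$, and choose the arc to pass once through $\beta_0^\bullet$ along $\gamma_j$.

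3. For a term $(1,0)\cdot[\gamma_j]$, the Proposition on $(3,n-3)$-barbells gives $\beta_j$ with $\Theta(f_{\beta_j})=x_j$.

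4. Additivity: on $\ker\Sigma$, the invariant $\Theta$ is a homomorphism with respect to composition, by Hatcher--Wagoner. Hence
$$\Theta(f_{\beta_1}\circ\cdots\circ f_{\beta_m})=\sum_j x_j=\Theta(f).$$
Also, $\ker\Sigma$ is closed under composition because $\Sigma$ is a homomorphism.

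5. Injectivity: $\Theta$ is bijective on $\ker\Sigma$ for $n\geq 6$. Therefore $f$ and $f_{\beta_1}\circ\cdots\circ f_{\beta_m}$ agree in $\pi_0\mathcal{P}(M)$, i.e. they are isotopic.

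\textbf{Part (2): diffeomorphisms.} Take $f$ with $\Sigma(f)=0$ resulting in $F$. Apply Part (1) to $f$ and then restrict to $M\times 1$. The map $\pi_0\mathcal{P}\to\pi_0\Diff$, $f\mapsto F_f$, is induced by a group homomorphism, and an isotopy of pseudo-isotopies restricts to an isotopy of diffeomorphisms. Hence
$$F=F_f\simeq F_{f_{\beta_1}}\circ\cdots\circ F_{f_{\beta_m}}.$$
By construction, each $F_{f_{\beta_j}}$ is the barbell diffeomorphism associated to $\beta_j$.

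\textbf{Main obstacle.} The delicate step is making sure that realizing each generator separately is legitimate. Three points need to be checked.

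First, the generating set must really generate. Generating $\Wh_1(\pi_1 M;\mathbb{Z}_2\times\pi_2 M)$ also requires negatives, so we need $-(0,\sigma)\cdot[\gamma]=(0,-\sigma)\cdot[\gamma]$, which is again of the allowed form. Likewise $(1,\sigma)\cdot[\gamma]=(1,0)\cdot[\gamma]+(0,\sigma)\cdot[\gamma]$. So the sum in step 1 can always be arranged with every term realized.

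Second, the element must be well-defined modulo $\chi(K_3)$. This is harmless: we only ever compare images in the quotient, and every lift of $\Theta(f)$ works.

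Third, one must confirm that the $\Theta$ of a composition is the sum as an element of the quotient group. I would cite Hatcher--Wagoner directly for this; it is their statement that $\Theta$ is a group homomorphism on $\ker\Sigma$.
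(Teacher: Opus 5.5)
Your proposal is correct and follows essentially the paper's own argument. The classes $(0,\sigma)\cdot[\gamma]$ and $(1,0)\cdot[\gamma]$ generate $\Wh_1(\pi_1 M;\mathbb{Z}_2\times\pi_2 M)$, and each is realized by some $f_\beta\in\ker\Sigma$ via \Cref{thm:barbell-realization-dim-n}(1) and \Cref{prop:calculate-second-hatcher-wagoner-invariant-for-beta-3-n-3}. Hatcher--Wagoner bijectivity of the homomorphism $\Theta$ on $\ker\Sigma$ for $n\geq 6$ then gives (1), and restricting to $M\times 1$ gives (2).

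The only slip is your ``concrete'' recipe of taking $R_0$ to represent $\sigma_j$. That works only when $w_2^M(\sigma_j)=0$, because an odd class has no framed embedded representative. For odd $\sigma_j$ the realization instead uses the tubing trick of \Cref{cor:realize-all-elements-with-half-unknotted-immersed-barbell-even-odd}, on which the theorem you cite rests. So your main logic is unaffected, but you should cite that corollary rather than the Remark for this step.
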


\begin{corollary}
    For a manifold $M^n,n\geq 6$ with $\Wh_2(\pi_1 M)=0$, for example, $M=S^1\times D^{n-1}$ or $S^1\times S^{n-1}$, any diffeomorphism $F\in \Diff(M,\partial)$ which is pseudo-isotopic to identity can be isotoped into a composition of half-unknotted barbell diffeomorphisms.   
\end{corollary}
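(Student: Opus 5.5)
The corollary follows from \Cref{thm:diffeomorphism-decomposed-by-barbells}(2) once we know that the pseudo-isotopy we start with has vanishing first Hatcher--Wagoner invariant. The plan is as follows.

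Let $F\in\Diff(M,\partial)$ be pseudo-isotopic to the identity, and choose any $f\in\pi_0\mathcal{P}(M)$ with $F_f=f|_{M\times 1}=F$. The first Hatcher--Wagoner invariant takes values in the group
\[
\Sigma(f)\in \Wh_2(\pi_1 M).
\]
By hypothesis $\Wh_2(\pi_1 M)=0$, so $\Sigma(f)=0$ automatically. Hence $F$ satisfies the hypothesis of \Cref{thm:diffeomorphism-decomposed-by-barbells}(2): there is a pseudo-isotopy resulting in $F$ that lies in $\ker\Sigma$. That theorem then says $F$ is isotopic to a finite composition of half-unknotted $(2,n-2)$-barbell diffeomorphisms and half-unknotted $(3,n-3)$-barbell diffeomorphisms. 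This is exactly the claim.

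It remains to check the examples. For $M=S^1\times D^{n-1}$ or $S^1\times S^{n-1}$ we have $\pi_1M=\mathbb{Z}$. The vanishing $\Wh_2(\mathbb{Z})=0$ is standard. The Fundamental Theorem of algebraic $K$-theory gives
\[
K_2(\mathbb{Z}[t,t^{-1}])\cong K_2(\mathbb{Z})\oplus K_1(\mathbb{Z}),
\]
because the nil terms vanish for the regular ring $\mathbb{Z}$. Both summands are accounted for by the trivial units and symbols $\{\pm1,\pm t\}$, which are precisely what $\Wh_2$ quotients out, so $\Wh_2(\mathbb{Z})=0$. I would cite this rather than reprove it.

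There is no genuine obstacle here. The only point needing care is a technicality in the closed case $S^1\times S^{n-1}$, where $\partial M=\emptyset$. One must check that $\Diff(M,\partial)$ and the fibration used to pass from $\mathcal{P}(M)$ to $\Diff_{PI}(M,\partial)$ behave the same as in the boundary case, so that \Cref{thm:diffeomorphism-decomposed-by-barbells} applies verbatim. I would simply note that the construction of $f_\beta$ and the cited Hatcher--Wagoner results are local, or valid for closed manifolds, and therefore go through unchanged.
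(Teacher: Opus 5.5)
Your proposal is correct and is the argument the paper intends; the corollary is stated there without a separate proof. Since $\Wh_2(\pi_1 M)=0$, every pseudo-isotopy $f$ resulting in $F$ automatically has $\Sigma(f)=0$, so \Cref{thm:diffeomorphism-decomposed-by-barbells}(2) applies directly; your justification of $\Wh_2(\mathbb{Z})=0$ (the two summands are generated by the trivial-unit symbols $\{-1,-1\}$ and $\{-1,t\}$) is the standard one, and your caveat about the closed example $S^1\times S^{n-1}$ is a fair point the paper does not address.
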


The paper is organized as follows: In \cref{sec:cerf-theory-and-parameterised-handle-constructions} we recall the Cerf theory and the parameterised handle constructions version of it, especially we explain how a loop of handle constructions of $X\times I$ can result in a pseudo-isotopy $f\in \pi_0\mathcal{P}$ of $X$, and how to see the resulting $F_f\in\Diff(X,\partial)$ directly from the loop of handle constructions. In \cref{sec:twin-twists-and-relation-with-barbell-diffeomorphisms} we recall the general notion of Montesino $(i,n-i)$-twins in $X^n,n\geq 4$ and the resulting twin twists in $\Diff(X,\partial)$ which were studied for $X=S^4$ in \cite{Gay_2025}. Then we show that if one of the spheres, let it be the $(n-i)$-sphere, in the Montesino twins is unknotted, then $\exists f\in \mathcal{P}(X)$ resulting in that twin twist with Cerf diagram being a loop of a single cancelling $(i-1,i)$-handle pair. Then we show that any implanted barbell diffeomorphism is a twin twist, so together with the above result, for a specific implanted $(i,n-i)$-barbell $\beta$ with the $(n-i)$-sphere unknotted in $X$, we find $g_\beta\in \mathcal{P}(X)$ with Cerf diagram being a loop of a single cancelling $(i-1,i)$-handle pair resulting in that implanted barbell diffeomorphism. In \cref{sec:changing-from-i-1-i-handle-pair-to-n-i-n-i-1-handle-pair} we follow the essential lemma \cite[Lemma 17]{Gay_2025} to develop a technique of \emph{one parameter version of 0-framed and dotted replacement} to change the pseudo-isotopy $g_\beta\in \pi_0\mathcal{P}$ with Cerf diagram being the above loop of cancelling $(i-1,i)$-handle pair to $f_\beta\in \pi_0\mathcal{P}$ with Cerf diagram being a loop of cancelling $(n-i,n-i+1)$-handle pair. Therefore we get a loop of cancelling $(n-i,n-i+1)$-handle pair resulting in implanted barbell diffeomorphism of $\beta$. In \cref{sec:computing-second-hatcher-wagoner-invariants-for-beta-2-n-2} we use the technology in \cref{sec:changing-from-i-1-i-handle-pair-to-n-i-n-i-1-handle-pair} to complete the computations of the second Hatcher-Wagoner invariant for $f_\beta$ with a given half-unknotted implanted $(2,n-2)$-barbell diffeomorphism. In \cref{sec:generalizations-to-immersed-barbell-diffeomorphisms}, for $n=4$ and $i=2$, we generalize to \emph{immersed barbell diffeomorphisms} and compute the induced Hatcher-Wagoner invariants for a half-unknotted immersed barbell diffeomorphism. As corollaries, we deduce \Cref{thm:barbell-realization-dim-4} as the \emph{barbell realization theorem} in dimension 4 and \Cref{thm:barbell-realization-dim-n} for dimension $n\geq 5$. In \cref{sec:a-special-barbell-beta-3-n-3-realizes-surjectivity-when-n-geq-6}, we show that for $n\geq 6$, $\forall \gamma\in \pi_1 M$, we can construct a half-unknotted $(3,n-3)$-barbell $\beta$ with the second Hatcher-Wagoner invariant $\Theta(f_\beta)=(1,0)\cdot[\gamma]$, together with \Cref{thm:barbell-realization-dim-n}, we know that $\ker\Sigma$ is generated by $\{f_\beta|\beta $ half-unknotted $(2,n-2)$-barbell or half-unknotted $(3,n-3)$-barbell$\}$, which implies \Cref{thm:diffeomorphism-decomposed-by-barbells}. 

\subsection*{Acknowledgements}

The author would like to thank Jean Cerf, Allen Hatcher and John Wagoner for developing the excellent theory about pseudo-isotopy. She also thanks David Gabai, David Gay, Daniel Hartman, Ryan Budney and Olivier Singh for their remarkable works on diffeomorphisms and pseudo-isotopies of 4-manifolds. She also thanks Daniel Hartman and Yicheng Yang for useful conversations along the way. She also thanks Yifei Fan for some of the illustrations in this paper. Most especially, she thanks Jianfeng Lin for being her mentor, making many inspiring discussions,  reading the first version of this paper and giving useful advice. 

The work was carried out during the author's fourth year in Qiuzhen College, Tsinghua University, and thus she expresses her deep gratitude to the institution.

\section{Cerf theory and parameterised handle constructions version}
\label{sec:cerf-theory-and-parameterised-handle-constructions}

We have defined the pseudo-isotopy group $\mathcal{P}(X)$ and want to study $\pi_0\mathcal{P}(X)$. Cerf discovered a great correspondence: 
$$\pi_0\mathcal{P}\cong \pi_1(\mathcal{F},\mathcal{E})$$
where $\mathcal{F}=\mathcal{F}(X):=\{f: X\times I\to I| f=\text{standard projection on a neighborhood of }\partial(X\times I)\}$ and $\mathcal{E}=\mathcal{E}(X)\subset \mathcal{F}(X)$ contains all such $f$ that have no critical points at all. The explicit correspondence is: Given $f\in \mathcal{P}(X)$, let $p$ be the standard projection from $X\times I$ to $I$. Then $p\circ f\in \mathcal{E}(X)$ since $f$ is a diffeomorphism of $X\times I$. It is not hard to show that $\mathcal{F}(X)$ is contractible, therefore we can choose a path $\gamma=\{\gamma_t:t\in I\}$ in $\mathcal{F}$ with $\gamma_0=p, \gamma_1=p\circ f$ both in $\mathcal{E}$. Then $\gamma$ is the corresponding element in $\pi_1(\mathcal{F},\mathcal{E})$. Conversely, given $f_t: X\times I\to I$ starting at $f_0=p$ and ending at $f_1=q\in \mathcal{E}$, consider the gradient vector field $V_q$ of $q$ on $X\times I$, integrating $V_q$ gives a diffeomorphism of $X\times I: (x,t)\to \phi_{V_q}(x,0)(t)$.

Then we reduce the question to the study of \emph{one parameter family of functions} on $X\times I$, namely, $f_t: X\times I\to I$ with $f_0,f_1$ having no critical points at all. Like what we did in Morse theory, we can perturb the path fixing $f_0,f_1$ such that for all but finitely many $t\in I$, $f_t$ is a Morse function with ordered, distinct critical values. At the exceptional points, there appears either an $(i,i+1)$-birth/death point (two critical points of index $i$ and $i+1$ have the same critical value, but by the ordering condition, they must cancel in one direction) or an $(i,i)$-crossing point (two non-degenerate critical points of same index $i$ with crossing critical values). The Cerf diagram records the changes of critical values in $f_t: X\times I$. Here's one example (see \Cref{fig:cerf-diagram-with-2-eyes}):

\begin{figure}[!ht]
    \centering
    \includegraphics[width=0.5\textwidth]{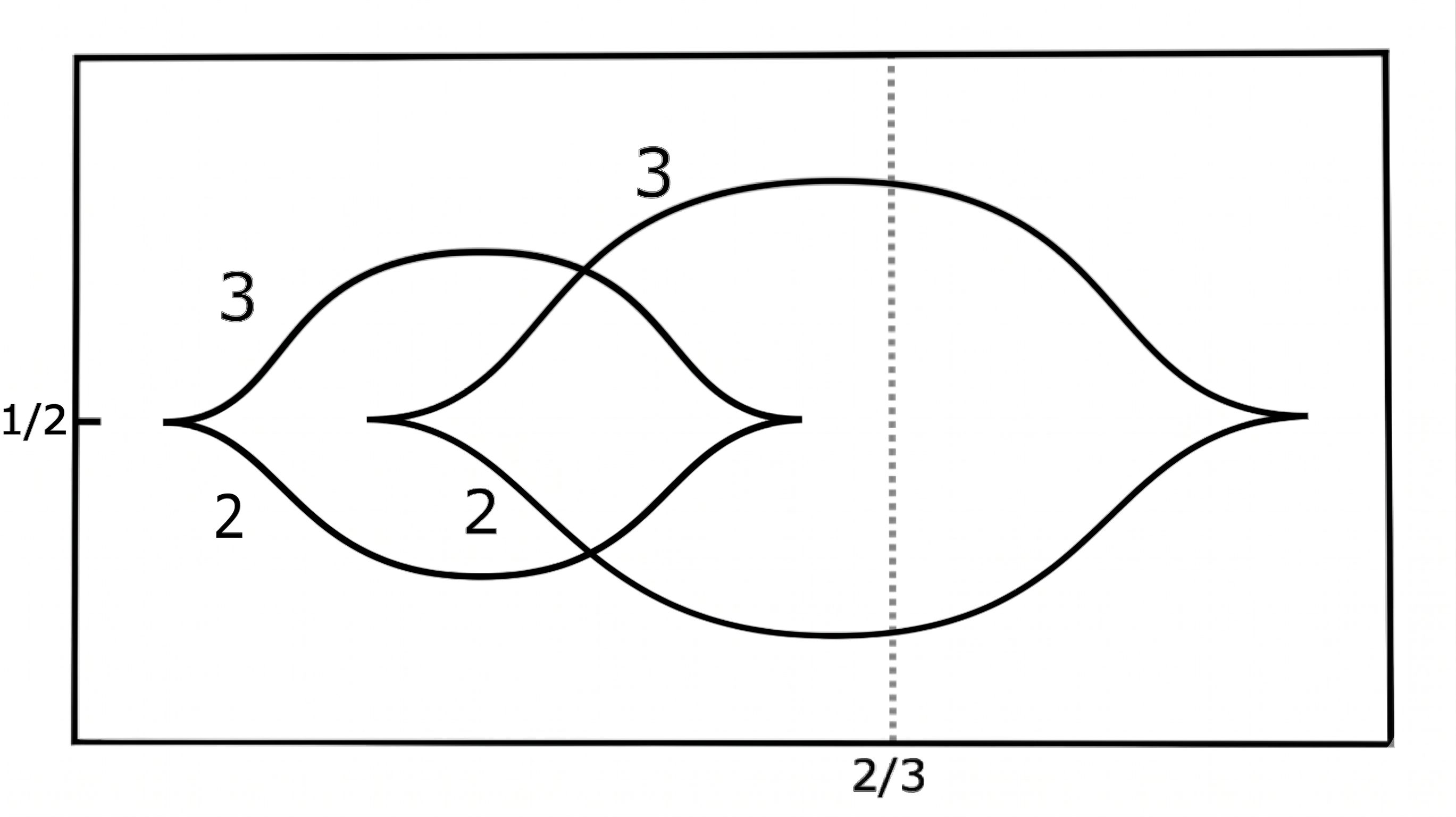}
    \caption{This is a Cerf diagram with 2 eyes, each eye begins with the birth and finishes with the death. All births and deaths happen in $g^{-1}_t(1/2)$. Except the four birth/death points, every $g_t$ is Morse. For example, $g_{2/3}$ has two non-degenerate critical points of index 2 and 3. There are two crossing points in the diagram. }
    \label{fig:cerf-diagram-with-2-eyes}
\end{figure}

In this paper we are trying to find a Cerf diagram for some pseudo-isotopy $f_\beta$ with only one eye of indices $k$ and $k+1$ for some $k$. Then let $g_t, t\in I$ be the corresponding one parameter family of functions on $X\times I$ with $g_0=p, g_1=p\circ f_\beta$. If we successfully find such a family $g_t$ with such a Cerf diagram, then the only things that happen here are as follows: For some small $\epsilon$, $g_\epsilon$ births a degenerate critical point(standard local model is $g_\epsilon=x^3-\sum_{i=1}^ky_i^2+\sum_{j=k+1}^{n-k}y_j^2$ near the origin $p=(0,...,0
)$ with $g_\epsilon(p)=1/2$), and for $t> \epsilon$, the critical point splits into two non-degenerate critical points of indices $k$ and $k+1$(standard local model is $g_{\epsilon+s}=x^3-sx-\sum_{i=1}^ky_i^2+\sum_{j=k+1}^{n-k} y_j^2$). Right after the birth point at $g_\epsilon^{-1}(1/2)$, the level set is $g_{\epsilon+s}^{-1}(1/2)=X\# S^k\times S^{n-k}$, with the dual sphere of the unique $k$-handle being $*\times S^{n-k}$ and the attaching sphere of the unique $k+1$-handle being $S^k\times *$. Then the two spheres isotope in $X\# S^k\times S^{n-k}$ during the interval $t\in (\epsilon, 1-\epsilon)$. They may intersect each other at more than one point during the isotopy, but finally, right before $t=1-\epsilon$, they return to the dual position, i.e. intersect at exactly one point. Then general Morse theory states that the ($k$,$k+1$)-handle pair can be cancelled, so a death occurs at $t=1-\epsilon$. But via an isotopy, we can assume one of the spheres is fixed throughout. In short, the $\{g_t,t\in I\}\in \pi_1(\mathcal{F},\mathcal{E})$ is completely governed by an element in $\pi_1(\Emb(S^k\times D^{n-k}, X\# S^k\times S^{n-k}), \Emb_0(S^k\times D^{n-k}, X\# S^k\times S^{n-k}))$ where $\Emb_0(S^k\times D^{n-k}, X\# S^k\times S^{n-k})$ denotes all the embedded $S^k\times D^{n-k}$ such that $S^k\times 0$ transversely intersect $*\times S^{n-k}$ at a single point.

In particular, whenever we have a loop of framed embedded $S^k$ in $X\# S^k\times S^{n-k}$, which means the element lies not only in the relative $\pi_1$, but also in $\pi_1(\Emb(S^k\times D^{n-k}, X\# S^k\times S^{n-k}),*)$, tracing the flow lines it is not hard to show that the resulting diffeomorphism on $X$ is just the following composition:
$$\pi_1(\Emb(S^k\times D^{n-k}, X\# S^k\times S^{n-k}),*)\to \pi_0\Diff(X\setminus \nu S^{n-k-1},\partial)\xrightarrow{\cup \id_{\nu S^{n-k-1}}}\pi_0\Diff(X,\partial)$$
where the first map is the isotopy extension at $t=1$ and removing $\nu S^k=\nu (S^k\times *)$ in $X\# S^k\times S^{n-k}$, which is diffeomorphic to $X\setminus \nu S^{n-k-1}$, where the $S^{n-k-1}$ denotes the dual sphere of the $(k+1)$-handle.

Just like \emph{"Building a manifold $X$ from a Morse function is equivalent to finding a handle decomposition for $X$"}, we have an equivalent version for pseudo-isotopy: \emph{Finding a one-parameter family of functions on $X\times I$ is equivalent to finding a one-parameter family of handle decompositions for $X\times I$}.

Thus if we have a loop $\mathcal{H}=\{H_t, t\in I|H_0=H_1\}$ of handle decompositions of $X\times I$ which starts and ends at a standard cancelling position which can be made null, by the above statement, this corresponds to an element in $\pi_1(\mathcal{F},\mathcal{E})=\pi_0\mathcal{P}$, where the resulting manifold $Z_t$ is diffeomorphic to $X\times I$. Since $H_0=H_1$, we have a natural way to identify $Z_0$ with $Z_1$ (since they are totally the same manifold). Then we build a cobordism from $S^1\times X$ to $Y=X\times I/((x,0)\sim (f_\mathcal{H}(x), 1))$ where $f_\mathcal{H}$ is the resulting diffeomorphism on $X$ with respect to $\mathcal{H}\in \pi_1(\mathcal{F},\mathcal{E})$. In handlebody language, we describe what $f_\mathcal{H}$ is: Each $H_t$ gives a series of surgeries $X=X_{0,t}\to X_{1,t}\to ...\to X_{n,t}$, where $X_{n,t}$ is diffeomorphic to $X$ but not in a natural way. But again, since $H_0=H_1$, we have $X_{n,0}=X_{n,1}\cong X$. But as $t$ varies, since $H_t$ moves smoothly, every attaching region moves smoothly, thus we have a diffeomorphism $\phi_{m,t}: X_{m,0}\to X_{m,t}, \forall m,t$. In particular $\phi_{n,1}=f_\mathcal{H}:X=X_{n,0}\to X_{n,1}=X$ is the desired diffeomorphism on $X$.

In particular, when $\mathcal{H}$ is a loop of a single $(k,k+1)$-cancelling pair, $Z_t=X\times I\cup h_{k,t}\cup h_{k+1,t}$. Since $h_{k,0}$ and $h_{k+1,0}$ form a standard cancelling pair, the attaching sphere of $h_{k,0}$ must be unknotted. Then let $X_{1,0}=(X\setminus  \nu S^{k-1}) \cup D^k\times S^{n-k}=X\# S^k\times S^{n-k}$ be the manifold after doing surgery on $h_{k,0}$. If we further assume $h_{k,t}=h_{k,0}$ throughout, then $X_{1,t}=X_{1,0}=X\# S^k\times S^{n-k}$. The only $t$-dependent data is the attaching region of $h_{k+1,t}$, which corresponds to $\pi_1(\Emb(S^k\times D^{n-k}, X_{1,0}),*)$, then one can directly see that the resulting $f_\mathcal{H}: X=X_{2,0}\to X_{2,1}=X$ is the composition map mentioned above.

\section{Twin twists and relation with barbell diffeomorphisms}
\label{sec:twin-twists-and-relation-with-barbell-diffeomorphisms}

First we derive a standard model for Montesino $(i,n-i)$-twins: For $i\geq 2$ and $n-i\geq 1$, consider $X=S^1\times S^{i-1}\times D^{n-i}$ with boundary $\partial X= S_l\times S_S\times S_R$, with $S_l=S^1\times 1\times 1$ a 1-sphere, $S_S=-1\times S^{i-1}\times 1$ an $(i-1)$-sphere and $S_R={-1}\times 1\times \partial D^{n-i}$ an $(n-i-1)$-sphere. Then we do surgery on $\nu(1\times S^{i-1}\times 0)$, and let $X'=(X\setminus \nu(1\times S^{i-1}\times 0))\cup D^i\times S^{n-i}$ be the resulting manifold. Then the core $T_R=(S^1\times S^{i-1}\times 0)$ of $X$ becomes $R=(T_R\setminus I\times S^{i-1} )\cup \partial I\times D^i$, which is a $i$-sphere with normal sphere $S_R$, and $S=0\times S^{n-i}\subset D^i\times S^{n-i}$ is a $(n-i)$-sphere with normal sphere $S_S$. Moreover, $R$ intersects with $S$ transversely at 2 points. The resulting manifold $X'$ is just a tubular neighborhood of $R\cup S$ with the same boundary $\partial X'=\partial X=S_l\times S_S\times S_R$. Based at $*=(-1,1,1)$, $S_l$ is homotopic to a longitude of $X'$, that is, $\pi_1(\partial X',*)\to \pi_1(X',*)\cong \pi_1(S^i\vee S^{n-i}\vee S^1,*)\to \pi_1(S^1, *)$ sends $[S_l]$ to the third component $1\in \pi_1(S^1,*)$ after a suitable orientation. An easy way to see that is that when $n-i-1=1$, $\pi_1(\partial X,*)\to \pi_1(X,*)$ simply kills $S_R$ and sends others via identity, when $n-i-1\geq 2$, $\pi_1(\partial X,*)=\pi_1(X,*)$. And by surgering $X$ to $X'$, if $i-1=1$, $\pi_1(X')=\pi_1(X)/[S_S]$ since it is a parallel copy of $1\times S^{i-1}\times 0$, if $i-1\geq 2$, $\pi_1(X')=\pi_1(X)$. Moreover, a simple way to construct a longitude in $X'$ is to consider $\{p,q\}=R\cap S$, and choose $\gamma_R\subset R, \gamma_S\subset S$ connecting $p$ and $q$, then the resulting $\gamma_R\cup_{p,q} \gamma_S$ is a longitude of $X'$. 

A standard twin twist is a Dehn twist near the boundary of $X'$, that is, choose a neighborhood of $\partial X'=I\times S_l\times S_S\times S_R$, and let $\phi_D\in \Diff(I\times S_l,\partial)$ be the standard Dehn twist, then twin twist $\tau$ is identity-extension of $\phi_D\times \id\times \id\in \Diff(\nu(\partial X'),\partial)$ in $\Diff(X',\partial)$.

We provide a more natural way to construct the twin twist $\tau$: Consider a loop of $S^{i-1}$ in $X$: $\gamma_t: S^{i-1}\to e^{2\pi i t}\times S^{i-1}\times 0$, the isotopy extension of $\gamma_t$ at $t=1$ induces a diffeomorphism on $X$ which fixes $\gamma_0$. Moreover, it can be made to fix the whole core $T_R$, that is, it induces a diffeomorphism on $X\setminus \nu T_R=\nu (\partial X)$, which is exactly $\phi_D\times \id\times \id$. Therefore $\tau$ is the induced diffeomorphism after surgering along $\gamma_0$. 

\begin{definition}
   A \emph{Montesino $(i,n-i)$-twin} in $M^n$ is a pair $W=(R,S)$ where $R$ is an embedded $S^i$ in $M$ with trivial normal bundle, $S$ is an embedded $S^{n-i}$ in $M$ also with trivial normal bundle and $R$ intersects $S$ transversely at 2 points. Equivalently, a Montesino twin in $M^n$ is an embedding $i_W: X'\hookrightarrow M$. The twin twist induced by $W$ is just the implanted diffeomorphism $\tau$, denoted by $\tau_W$. 
\end{definition}

Any twin twist is induced by a \emph{parameterised surgery of index $(i-1)$} (see \cite{kosanović2025diffeomorphisms4manifoldsgraspers} for details) which we will briefly describe below:

\begin{definition} \label{def:parameterised-surgery}
    Given a framed embedded $S^{n-i}$ in $M^n$, that is, $\nu S: S^{n-i}\times D^i\to M^n$, let $M_{\nu S}:=(M\setminus \nu S) \cup D^{n-i+1}\times S^{i-1}$ be the manifold obtained by performing surgery on $S$. We say a diffeomorphism $\phi$ of $M$ is induced by a \emph{parameterised surgery of index $(i-1)$} if $\phi$ is in the image of $\text{ps}_{\nu S}$:
    $$\text{ps}_{\nu S}: \pi_1(\Emb(\nu S^{i-1}, M_{\nu S}), i_0)\to \pi_0\Diff(M\setminus \nu S, \partial)\xrightarrow{\cup \id_{\nu S}} \pi_0\Diff(M,\partial)$$
    where $i_0=0\times S^{i-1}\subset M_{\nu S}$ with natural framing is the base point of the embedding space and the first map is induced by isotopy extension.
\end{definition}

\begin{proposition} \label{prop:twin-twist-is-induced-by-parameterised-surgery}
    For any Montesino $(i,n-i)$-twin $W=(R,S)$ in $M$, the twin twist $\tau_W$ is induced by a \emph{parameterised surgery of index $(i-1)$} on $S$. 
\end{proposition}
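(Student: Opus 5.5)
The plan is to transport the standard-model description of $\tau$ into $M$ through the embedding $i_W: X'\hookrightarrow M$. Recall that $X'$ was obtained from $X=S^1\times S^{i-1}\times D^{n-i}$ by surgery on $\nu(1\times S^{i-1}\times 0)$. Reversing this, surgery on $S=0\times S^{n-i}\subset D^i\times S^{n-i}$ inside $X'$ returns $X$. Indeed, removing $\nu S\cong S^{n-i}\times D^i$ and gluing in $D^{n-i+1}\times S^{i-1}$ undoes the original surgery, since surgery on the belt sphere of the surgery cancels it. The first step is therefore to check that $X'_{\nu S}\cong X$, with the attached $0\times S^{i-1}$ corresponding to $\gamma_0=1\times S^{i-1}\times 0$ and with matching framings. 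Under $i_W$ this gives $M_{\nu S}\supset X'_{\nu S}\cong X$, and $i_0$ is identified with $\gamma_0$.

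The second step is to produce the loop. The loop $\gamma_t: S^{i-1}\to e^{2\pi i t}\times S^{i-1}\times 0$ in $X$, together with its product framing, is a loop of framed embeddings based at $i_0$. Pushing it into $M_{\nu S}$ through $X\subset M_{\nu S}$ gives a class $[\gamma_t]\in \pi_1(\Emb(\nu S^{i-1},M_{\nu S}),i_0)$.

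The third step is to compute $\text{ps}_{\nu S}([\gamma_t])$. The isotopy extension of $\gamma_t$ can be chosen to be supported in the image of $X$. By the discussion preceding the definition, it can also be chosen to fix $\nu T_R$ pointwise, where we enlarge the neighborhood of $\gamma_0$ to a neighborhood of the core. At $t=1$ it restricts on $X\setminus\nu\gamma_0=X'\setminus\nu S$ to a diffeomorphism equal to $\phi_D\times\id\times\id$ near $\partial X'$ and to the identity elsewhere. Extending by $\id_{\nu S}$ and then by the identity outside $i_W(X')$ gives exactly the implanted twin twist $\tau_W$.

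The main obstacle is making the identification $M\setminus\nu S\cong M_{\nu S}\setminus\nu i_0$ precise and compatible with framings. I would verify this in the local model by an explicit handle description: $D^i\times S^{n-i}$ splits as the $S^{n-i}$ core together with the dual $D^{n-i+1}\times S^{i-1}$ handle. I would also check that the isotopy extension can be taken to fix $\nu T_R$, which leaves only a Dehn twist in the collar. To do this, I would write the isotopy as rotation in the $S^1$ factor, damped to the identity by a bump function in the $D^{n-i}$ radius.
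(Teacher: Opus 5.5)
Your proposal is correct and follows the same route as the paper. Surgery on $S$ inside the standard model $X'$ recovers $X=S^1\times S^{i-1}\times D^{n-i}\subset M_{\nu S}$ with $i_0$ corresponding to $1\times S^{i-1}\times 0$, and $\text{ps}_{\nu S}$ sends the loop $\gamma_t$ (rotating $S^{i-1}$ around the $S^1$ factor) to the collar Dehn twist, i.e.\ to $\tau_W$. Your bump-function isotopy only makes explicit what the paper asserts in its ``second way'' of describing $\tau$; note that only the time-$1$ map, not the whole isotopy, fixes $\nu T_R$ pointwise.
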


\begin{proof}
    By the standard model of Montesino twins we have constructed, doing surgery on $S$ yields an embedding $i: X=S^1\times S^{i-1}\times D^2=\nu T_R\to M_{\nu S}$ with $i_0=i(1\times S^{i-1}\times 0)$. By the second way we described the twin twist $\tau$, $\tau_W=\text{ps}_{\nu S}(\gamma_t)$ where $\gamma_t: S^{i-1}\to i(e^{2\pi it}\times S^{i-1}\times 0)\subset M_{\nu S}$.
\end{proof}

\begin{proposition} \label{prop:twin-twist-is-pseudo-isotopic-to-identity-when-s-unknotted}
    When $W=(R,S)$ with $S$ unknotted, then there exists a loop $\mathcal{H}\in \pi_0\mathcal{P}$ of handle decompositions of $M\times I$ with a single $(i-1, i)$-cancelling pair resulting in $\tau_W$. In particular, $\tau_W$ is pseudo-isotopic to identity with the corresponding Cerf diagram being a single eye of $(i-1,i)$-handle pair. 
\end{proposition}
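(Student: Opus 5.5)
The plan is to combine \Cref{prop:twin-twist-is-induced-by-parameterised-surgery} with the description, at the end of \cref{sec:cerf-theory-and-parameterised-handle-constructions}, of the diffeomorphism produced by a loop of a single cancelling handle pair. By \Cref{prop:twin-twist-is-induced-by-parameterised-surgery}, $\tau_W=\text{ps}_{\nu S}(\gamma_t)$. Here $\gamma_t$ is the loop of framed $(i-1)$-spheres $i(e^{2\pi i t}\times S^{i-1}\times 0)$ in $M_{\nu S}$, based at $i_0$. It therefore suffices to realize $M_{\nu S}$, $i_0$ and the loop $\gamma_t$ as the data of a loop of handle decompositions of $M\times I$ with one $(i-1,i)$-cancelling pair. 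We want the $(i-1)$-handle fixed and only the attaching region of the $i$-handle moving.

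\textbf{Step 1 (the $(i-1)$-handle).} Since $S$ is unknotted, it bounds an embedded $D^{n-i+1}$, call it $B\subset M$. Surgery on an unknotted framed $S^{n-i}$ gives $M_{\nu S}\cong M\# S^{n-i+1}\times S^{i-1}$. The framing is the standard one, possibly after adjusting; this uses that the framing of $S$ extends over $B$, which I will check or arrange. This is exactly the level set $M_{1,0}=(M\setminus\nu S^{i-2})\cup D^{i-1}\times S^{n-i+1}$ obtained after attaching a trivially attached $(i-1)$-handle $h_{i-1}$ to $M\times I$ along an unknotted $S^{i-2}$ in a ball. Under this identification, the removed $\nu S$ is the neighborhood of the belt sphere $*\times S^{n-i+1}$, up to dualizing, and the attached $D^{n-i+1}\times S^{i-1}$ corresponds to the complement side. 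Concretely, I would match $i_0=0\times S^{i-1}\subset D^{n-i+1}\times S^{i-1}$ with the $(i-1)$-sphere $S^{i-1}\times *$ that meets the dual sphere once. Then $i_0$ is the attaching sphere of a standard cancelling $i$-handle $h_{i,0}$: its attaching sphere meets the belt sphere of $h_{i-1}$ transversely in one point, since $i_0$ meets the core of $\nu S$'s replacement geometrically once.

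\textbf{Step 2 (the loop).} Take $h_{i-1,t}=h_{i-1}$ fixed and let the attaching region of $h_{i,t}$ be $\gamma_t$ with its framing. Since $\gamma_0=\gamma_1=i_0$, this is a loop $\mathcal{H}$ of handle decompositions starting and ending at the standard cancelling position. So it gives a class in $\pi_1(\mathcal{F},\mathcal{E})\cong\pi_0\mathcal{P}$ whose Cerf diagram is a single eye of $(i-1,i)$ type: birth near $t=0$, the loop in the middle, and death near $t=1$. By the discussion in \cref{sec:cerf-theory-and-parameterised-handle-constructions}, the resulting diffeomorphism is isotopy extension of $\gamma_t$ in $M_{1,0}$. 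This is followed by removing $\nu(\text{attaching sphere})$, giving a diffeomorphism of $M_{1,0}\setminus\nu i_0\cong M\setminus\nu S$, which is then extended by the identity. That is precisely $\text{ps}_{\nu S}(\gamma_t)=\tau_W$.

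\textbf{Main obstacle.} The delicate point is Step 1. I must check that the identification $M_{1,0}\setminus \nu(S^{i-1}\times *)\cong M\setminus\nu S$ coming from handle cancellation agrees with the tautological one $M_{\nu S}\setminus\nu i_0=M\setminus \nu S$ used in \Cref{def:parameterised-surgery}. The issue is that the roles of the two spheres are dual: the surgery in the paper's section 2 removes $\nu S^{n-k-1}$, the dual sphere of the $(k+1)$-handle. With $k=i-1$, this dual sphere has dimension $n-i$, matching $S$. So I would set things up so that the belt sphere of the $i$-handle, pushed into $M$, is isotopic to $S$ using the unknotting ball $B$. I also need the framings to agree, so that the embedding $\nu S$ is the standard one. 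The boundary-fixed isotopy between these identifications then shows the two resulting mapping classes coincide in $\pi_0\Diff(M,\partial)$.
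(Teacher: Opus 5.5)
Your proposal follows essentially the same route as the paper. You identify $M_{\nu S}$ with $M\# S^{i-1}\times S^{n-i+1}$ as the level set after a fixed trivial $(i-1)$-handle, attach the $i$-handle along the loop $\gamma_t$ supplied by \Cref{prop:twin-twist-is-induced-by-parameterised-surgery}, and read off $f_{\mathcal{H}}$ as the time-one isotopy extension on $M_{\nu S}\setminus\nu i_0=M\setminus\nu S$ extended by $\id_{\nu S}$. Your extra care about the framing of $S$, and about matching the handle-cancellation identification with the tautological one, addresses points the paper passes over silently.

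One small correction to Step 1. $S$ corresponds to the belt sphere of the $i$-handle, as you correctly say in your last paragraph, not to $*\times S^{n-i+1}$. Also, $i_0$ is itself the core of the replacement $D^{n-i+1}\times S^{i-1}$, so what it meets once is the cocore $D^{n-i+1}\times\{\mathrm{pt}\}$. That cocore, together with the ball $B$, forms the belt sphere $*\times S^{n-i+1}$ of $h_{i-1}$.
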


\begin{proof}
    When $S$ is an unknotted $S^{n-i}$, $M_{\nu S}=M\# S^{n-i+1}\times S^{i-1}=M\# S^{i-1}\times S^{n-i+1}$, this can be obtained by attaching a trivial $(i-1)$-handle $h_{i-1,t}=h_{i-1,0}$ on $M\times I$. Then \Cref{prop:twin-twist-is-induced-by-parameterised-surgery} proved that $\tau_W$ is induced by an element $\gamma\in \pi_1(\Emb(\nu S^{i-1}, M\# S^{i-1}\times S^{n-i+1}), i_0)$. We attach the $i$-handle $h_{i,t}$ along $\gamma_t$, then $\mathcal{H}=\{H_t,t\in S^1\}=\{(h_{i-1,t}, h_{i,t}), t\in S^1\}$ forms a pseudo-isotopy of $M$ with the Cerf diagram being a single eye of $(i-1,i)$-handle pair. Then $\tau_W=\phi_\gamma(1)|_{M\# S^{i-1}\times S^{n-i+1}\setminus \nu S^{i-1}}\cup \id_{\nu S}=f_\mathcal{H}$ where $\phi_\gamma (t)$ denotes the time $t$ isotopy extension on $M\# S^{i-1}\times S^{n-i+1}$.
\end{proof}

Then we recall the definition of an (implanted) barbell diffeomorphism and prove that any implanted barbell diffeomorphism is a special twin twist.

\begin{definition}
    The \emph{standard $(i,n-i)$-barbell diffeomorphism} $\beta=\beta_{i,n-i}$ on $S^{i}\times D^{n-i} \natural S^{n-i}\times D^{i}$ is defined as follows: Consider $D^n$ and 2 disjoint properly embedded $D^{n-i-1}, D^{i-1}$. Let $D^{i-1}$ rotate around a normal sphere $S^i\times *\subset SN(D^{n-i-1})=S^i\times D^{n-i-1}$ and come back. This isotopy extension induces an element in $\Diff(D^n\setminus (\nu D^{n-i-1}\cup \nu D^{i-1})=S^{i}\times D^{n-i} \natural S^{n-i}\times D^{i}, \partial)$. The \emph{implanted $(i,n-i)$-barbell diffeomorphism} is obtained by implanting a barbell $\beta=\beta_{i,n-i}=(S_1, S_2, \gamma)$, i.e. $S_1$ a framed embedded $S^i$ and $S_2$ a framed embedded $S^{n-i}$ with an arc $\gamma$ connecting them, and extending the barbell diffeomorphism by $\id$ on $M\setminus \nu \beta$. Later on when we say a implanted $(i,n-i)$-barbell $\beta=(S_1,S_2,\gamma)$ is \emph{half-unknotted}, we usually mean $S_2$ is unknotted in $M$, i.e. $S_2=\partial\beta_0^\bullet$ for some $\beta_0^\bullet: D^{n-i+1}\hookrightarrow M$.
\end{definition}

\begin{proposition}
    Let $\beta=(R_0,S,\gamma)$ be an implanted $(i,n-i)$-barbell in $M$. Finger-push $R_0$ along $\gamma$ into $S$ to get another sphere $R$; then $R$ intersects $S$ transversely at 2 points. Then the implanted barbell diffeomorphism by $\beta$ is isotopic to twin twist $\tau_W$ where $W=(R,S)$.
\end{proposition}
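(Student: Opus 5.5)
The plan is to reduce everything to the standard local model, since both diffeomorphisms are supported in a regular neighborhood of $R_0\cup\gamma\cup S$. Call this neighborhood $N$. It is diffeomorphic to the barbell $\mathcal{B}=S^i\times D^{n-i}\natural S^{n-i}\times D^i$. Finger-pushing $R_0$ along $\gamma$ is supported inside $N$, so $W=(R,S)$ embeds in $N$, and $\nu(R\cup S)\cong X'$ sits inside $N$. It therefore suffices to prove the statement for the standard barbell $\mathcal{B}$ and the standard twin $X'\subset\mathcal{B}$. Then both sides extend by the identity, and the isotopy rel $\partial N$ extends as well.

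The key step is to express both maps through \Cref{prop:twin-twist-is-induced-by-parameterised-surgery}, as parameterised surgeries of index $(i-1)$ on $S$.

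For the barbell diffeomorphism I use its definition. We have $\mathcal{B}=D^n\setminus(\nu D^{n-i-1}\cup\nu D^{i-1})$, and the map is the isotopy extension of $D^{i-1}$ swinging once around the normal sphere $S^i\times *$ of $D^{n-i-1}$. Under this identification, $S$ is the normal $(n-i)$-sphere to $D^{i-1}$, and $R_0$ is the normal $i$-sphere to $D^{n-i-1}$. Surgering $S$ amounts to regluing $\nu D^{i-1}$, whose core is $D^{i-1}$ together with its normal sphere, essentially a small $S^{i-1}$ linking $S$. In $\mathcal{B}_{\nu S}$ the barbell map is therefore $\text{ps}_{\nu S}$ of the loop in which this small $S^{i-1}$, tubed to the boundary, travels around $R_0$. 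More precisely, the $S^{i-1}$ is dragged along $\gamma$, swept over $R_0$ (a loop of $(i-1)$-spheres parametrised by the $S^i$ family of meridian directions), and brought back.

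For the twin twist, \Cref{prop:twin-twist-is-induced-by-parameterised-surgery} gives the loop $\gamma_t$ that pushes $i_0=1\times S^{i-1}\times 0$ once around the $S^1$ factor of $\nu T_R\subset M_{\nu S}$. I then identify $T_R$ with the surgered $R$. Surgery on $S$ turns the two intersection points of $R\cap S$ into a tube, so $T_R\cong S^1\times S^{i-1}$. Its $S^1$-direction runs along $\gamma$, over the disk of $R_0$, and back along the other side of the finger.

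The main obstacle is comparing these two loops in $\pi_1(\Emb(\nu S^{i-1},\mathcal{B}_{\nu S}),i_0)$, where the two descriptions of the same motion look different: in one, a sphere is swept over $R_0$; in the other, it is pushed around the circle of $T_R$. My approach is to realise both as the boundary motion of the $(i-1)$-spheres $\partial(\text{cocore slices})$ in $\nu T_R$. Near the finger, $T_R$ is the boundary of a thickening of $\gamma\times S^{i-1}$ capped by $R_0$, so pushing $i_0$ around $S^1$ is the same as sliding it along the finger, across $R_0$ viewed as a family of meridional $(i-1)$-spheres, and back. I would make this precise by writing both in the explicit model $D^n$ and exhibiting an explicit homotopy of loops that straightens the finger. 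The framings must also be checked to agree, and they do, because all data come from the trivial normal bundle in $D^n$.

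Having shown that the two loops agree, isotopy extension yields isotopic diffeomorphisms of $\mathcal{B}\setminus\nu S$ rel boundary. Gluing $\id_{\nu S}$ and implanting into $M$ then gives the claim.
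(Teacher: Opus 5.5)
Your proposal is correct and follows essentially the same route as the paper. Both arguments write the twin twist and the barbell map as $\text{ps}_{\nu S}$ of loops of $(i-1)$-spheres in the surgered manifold, and both identify the sweep of $i_0$ around the $S^1$-factor of $T_R$ with the standard barbell motion of $D^{i-1}$ (out along $\gamma$, around $R_0$, back). The paper packages this as a commutative square through $\pi_1(\Emb(D^{i-1},S^i\times D^{n-i}),*)\to\pi_1(\Emb(S^{i-1},M_{\nu S}),*)$ and simply asserts the loop identification, whereas you single it out as the step needing an explicit homotopy.

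One small imprecision: surgering $S$ is not literally regluing $\nu D^{i-1}$. One gets $\mathcal{B}_{\nu S}\cong(\mathcal{B}\cup\nu D^{i-1})\natural S^{i-1}\times D^{n-i+1}$, so $i_0$ is the moving hemisphere $D^{i-1}$ capped by a fixed hemisphere through the extra trivial $(i-1)$-handle. Your ``tubed to the boundary'' description already reflects this.
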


\begin{proof}
    We first describe $\tau_W$ with $W=(R,S)$: After surgering along $S$, we get an embedding $\nu T_R\to M_{\nu S}$ where $T_R=S^1\times S^{i-1}$ which has a framing induced by $\nu R_0$, this corresponds to an element $[\nu T_R]$ in $\pi_1(\Emb(\nu S^{i-1}, M_{\nu S}),*)$, then we have:
    \[\begin{tikzcd}
        & {\pi_0\text{Diff}(M,\partial)} \\
        {\pi_1(\Emb(S^{i-1}, M_{\nu S}),*) } & {\pi_0\text{Diff}(M_{\nu S}\setminus \nu S^{i-1}=M\setminus\nu S, \partial)} \\
        {\pi_1(\Emb(D^{i-1}, D^n\setminus \nu D^{n-i-1}=S^i\times D^{n-i}),*)} & {\pi_0\text{Diff}(S^i\times D^{n-i}\natural S^{n-i}\times D^i,\partial)}
        \arrow[from=2-1, to=2-2]
        \arrow["{\cup \id_{\nu S}}", from=2-2, to=1-2]
        \arrow[from=3-1, to=2-1]
        \arrow[from=3-1, to=3-2]
        \arrow[from=3-2, to=2-2]
    \end{tikzcd}\]

    The first line is just $\text{ps}_{\nu S}$ so by \Cref{prop:twin-twist-is-induced-by-parameterised-surgery} it sends $[\nu T_R]$ to $\tau_W$. On the other hand, when $S^{i-1}$ goes around $T_R$, it just goes along $\gamma$, winds $R_0$ around and then comes back. This is exactly what $D^{i-1}$ does in the standard barbell. Thus we just implant $(\nu D^{i-1}, S^i\times D^{n-i})$ into $M_{\nu S}$ (see Figures in \Cref{eg:example-of-S1xD3} below). The resulting diffeomorphism in $M_{\nu S}\setminus \nu S^{i-1}=M\setminus \nu S$ is the barbell diffeomorphism with respect to theimplanted $\beta=(R_0, \text{normal sphere of }S^{i-1}=S, \gamma)$, when extended to $M$ it is still the same barbell diffeomorphism.
\end{proof}

Combining the above two propositions we get:
\begin{corollary}
    For any implanted $(i,n-i)$-barbell $\beta=(R,S,\gamma)$ with $S$ unknotted in $M$, the implanted barbell diffeomorphism is pseudo-isotopic to identity by $g_\beta\in \pi_0\mathcal{P}$ with the corresponding Cerf diagram being a single eye of $(i-1,i)$-handle pair. Moreover, it is induced from an element $[\nu T_R]\in \pi_1(\Emb(S^{i-1}\times D^{n-i+1}, M\# S^{i-1}\times S^{n-i+1}), *)$.
\end{corollary}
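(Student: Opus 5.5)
The corollary follows by chaining the two preceding propositions; nothing new has to be constructed. The plan is as follows.

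First, start from the half-unknotted barbell $\beta=(R_0,S,\gamma)$ and finger-push $R_0$ along $\gamma$ into $S$. This produces a sphere $R$ meeting $S$ transversely in exactly two points, so $W=(R,S)$ is a Montesino $(i,n-i)$-twin. The normal bundles are trivial: $R$ is isotopic to the framed sphere $R_0$, and $S$ is framed. By the last proposition, the implanted barbell diffeomorphism is isotopic to the twin twist $\tau_W$.

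One point needs checking here. Finger-pushing only moves $R_0$, so $S$ is unchanged. In particular $S=\partial\beta_0^\bullet$ is still unknotted in $M$, and the hypothesis of \Cref{prop:twin-twist-is-pseudo-isotopic-to-identity-when-s-unknotted} holds for $W$.

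Next, apply \Cref{prop:twin-twist-is-pseudo-isotopic-to-identity-when-s-unknotted} to $W$. Because $S$ is unknotted, surgery on $S$ gives $M_{\nu S}\cong M\# S^{i-1}\times S^{n-i+1}$. This is realised by attaching a fixed, trivial $(i-1)$-handle to $M\times I$.

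The loop of attaching regions for the $i$-handle is the loop supplied by \Cref{prop:twin-twist-is-induced-by-parameterised-surgery}. This loop is the class $[\nu T_R]\in\pi_1(\Emb(S^{i-1}\times D^{n-i+1}, M\# S^{i-1}\times S^{n-i+1}),*)$, obtained by sweeping $S^{i-1}$ once around the torus $T_R$. The resulting loop of handle decompositions $\mathcal{H}$ defines an element $g_\beta:=f_{\mathcal{H}}\in\pi_0\mathcal{P}$. Its Cerf diagram is a single eye of an $(i-1,i)$-handle pair, and it results in $\tau_W$.

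Since the barbell diffeomorphism and $\tau_W$ are isotopic, $g_\beta$ (after composing its end with that isotopy) results in the barbell diffeomorphism itself. This gives both claims of the corollary, including that it is induced from $[\nu T_R]$. To compose with the isotopy without changing the Cerf diagram, use the fibration $\mathcal{J}\to\mathcal{P}\to\Diff_{PI}$: the isotopy lifts to a path in $\mathcal{P}$, equivalently a concordance supported in a collar, which introduces no critical points.

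The only step that needs any care is the identification of base points in the last proposition's commutative diagram. We must check that the loop $[\nu T_R]$, with its framing induced from $\nu R_0$, is the same loop used in the handle construction. Both come from the same embedding $\nu T_R\hookrightarrow M_{\nu S}$, so this should be immediate.
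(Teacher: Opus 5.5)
Your proposal is correct and is the paper's own argument: the corollary is obtained by combining the proposition that the implanted barbell diffeomorphism is isotopic to the twin twist $\tau_W$ for $W=(R,S)$ with \Cref{prop:twin-twist-is-pseudo-isotopic-to-identity-when-s-unknotted}, exactly as you do. The paper leaves your extra checks implicit, namely that finger-pushing leaves $S$ unknotted and that passing from $\tau_W$ to the isotopic barbell map is harmless; for the latter, composing $g_\beta$ with a level-preserving isotopy $h$ gives $p\circ(h\circ g_\beta)=p\circ g_\beta$, so the path in $\mathcal{F}$, and hence the Cerf diagram, is literally unchanged.
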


\begin{example} \label{eg:example-of-S1xD3}
    Here we draw the corresponding loop in $\pi_1(\Emb(S^1, S^1\times D^3\# S^1\times S^3), *)$ (for simplicity we omit the framing but it's just the
     natural framing induced by $\nu T_R|_{S^1}$ and $\nu (S^1,T_R)$) which corresponds to Gabai's constructions $\delta_k$ (see \Cref{fig:implanted-barbell-in-S1xD3,fig:finger-push-of-barbell,fig:surgery-along-S-to-get-torus}).
\end{example}

\begin{figure}[!ht]
    \centering
    \includegraphics[width=0.5\textwidth]{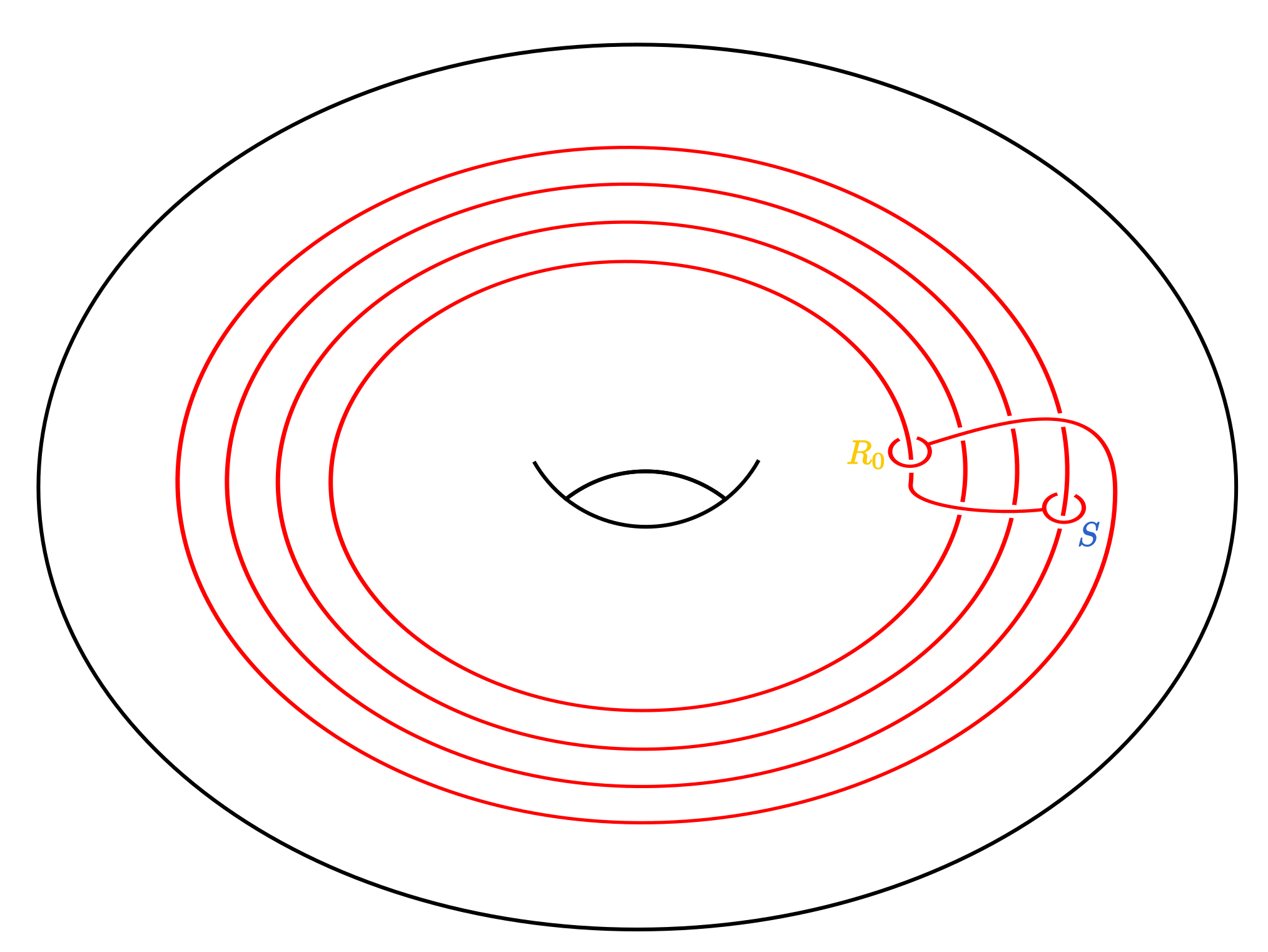}
    \caption{The implanted barbell $\delta_4$ in $M=S^1\times D^3$}
    \label{fig:implanted-barbell-in-S1xD3}
\end{figure}

\begin{figure}[!ht]
    \centering
    \includegraphics[width=0.5\textwidth]{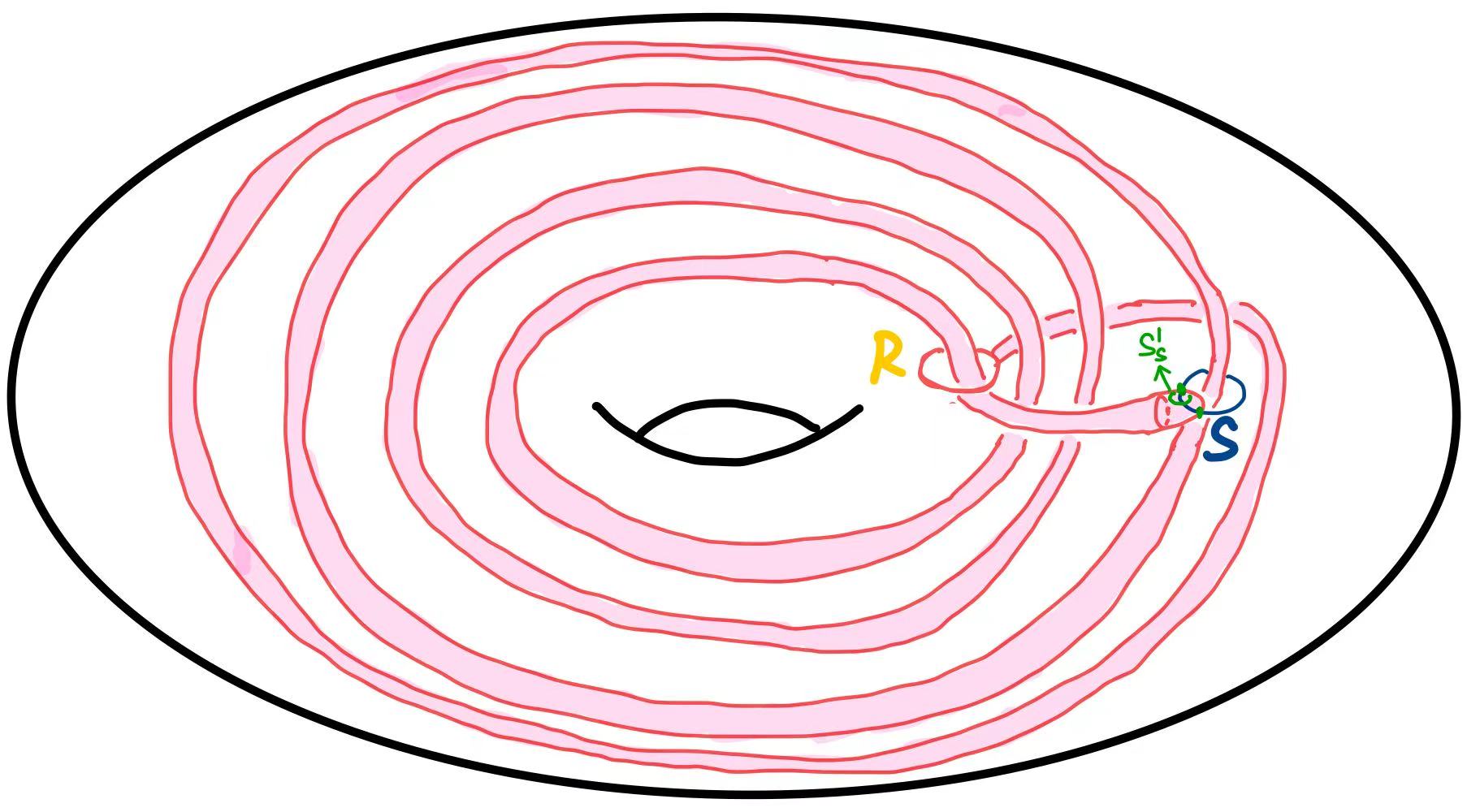}
    \caption{Finger-push $R_0$ to get $R$ such that $R\cap S=2$ points}
    \label{fig:finger-push-of-barbell}
\end{figure}

\begin{figure}[!ht]
    \centering
    \includegraphics[width=0.7\textwidth]{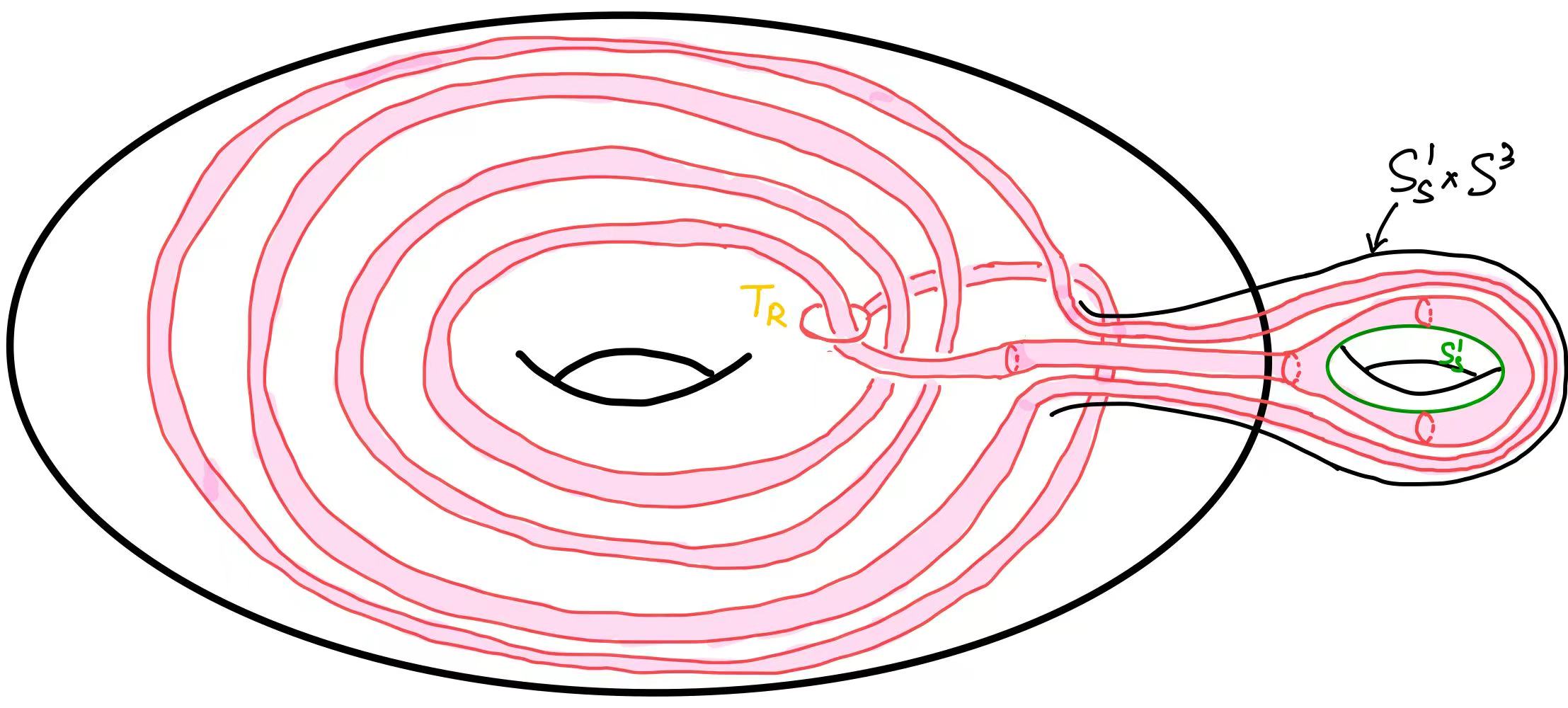}
    \caption{Do surgery along $S$ to get an embedded torus $T_R$ in $S^1\times D^3\# S^1\times S^3$ with the standard $S^1$ (green one), this $T_R$ determines an element in $\pi_1(\Emb(S^1, S^1\times D^3\# S^1\times S^3),*)$ which is a loop of handle decompositions $\mathcal{H}=\{H_t,t\in S^1\}$ (a loop of (1,2)-handle pair) of $S^1\times D^3\times I$ which results in the barbell diffeomorphism $\delta_4=\tau_W$ where $W=(R,S)$.}
    \label{fig:surgery-along-S-to-get-torus}
\end{figure}
\section{Changing from \texorpdfstring{$(i-1,i)$}{(i-1,i)}-handle pair to \texorpdfstring{$(n-i,n-i+1)$}{(n-i,n-i+1)}-handle pair}
\label{sec:changing-from-i-1-i-handle-pair-to-n-i-n-i-1-handle-pair}

The author first found the excellent idea from \cite{Gay_2025}. It is essentially a higher dimensional generalizations of Kirby calculus, where in dimension 4, we use dotted unknotted $S^1=\partial D^2\subset M^3$ to represent a 1-handle, and framed $S^1\subset M$ to represent 2-handle. The central idea is that when the framed $S^1$ is unknotted and 0-framed, \emph{swapping the dotted circle and the 0-framed circle} changes the cobordism but results in the same boundary manifold. 

In this section we first describe the dotted version of trivial $k$-handles in a $(n+1)$-dimensional handle construction $H$ on $X^{n+1}=M\times I$, then derive the strategy of \emph{dotted and 0-framed replacement}. After that, we move on to the one-parameter versions of both, namely, dotted version of a Cerf diagram containing a single eye of $(k-1,k)$-handle pair which corresponds to an element in $\pi_1(\Emb(S^{k-1}\times D^{n-k+1}, M\# S^{k-1}\times S^{n-k+1}),\Emb_0(S^{k-1}\times D^{n-k+1}, M\# S^{k-1}\times S^{n-k+1}))$, and carefully develop the \emph{one-parameter version of dotted and 0-framed replacement}.

Consider the trivial cobordism $Z=M\times I$ from $M$ to $M$, if we attach a trivial $k$-handle on top of $Z$, which means the attaching sphere $S^{k-1}=\partial D^k$ for an embedded $D^k\hookrightarrow M$ and the framing of $S^{k-1}$ is induced by this embedded $D^k$, we get $Z_1=Z\cup h_k$ which is a cobordism from $M$ to $M_1=(M\setminus S^{k-1}\times D^{n-k}) \cup D^k\times S^{n-k}=M\# S^k\times S^{n-k}$. If we push the one handle into original $Z=M\times I$, we know that it is the same as cutting a neighborhood of an unknotted properly embedded $D_{in}^{n-k}$ in $M\times I$, that is, $Z_1=M\times I\setminus \nu D_{in}^{n-k}$ where $D^{n-k}_{in}$ is obtained as follows: Choose an embedding $i: D^{n-k}\to M\times 1$, fix $i(\partial D^{n-k})$ then push $i(D^{n-k})$ into $M\times I$ to get $D_{in}^{n-k}$. Then when the attaching sphere $S^k$ of a $(k+1)$-handle goes through $S^k\times S^{n-k}\setminus D^n\subset M_1$, if we can isotope this $S^k$ such that it is disjoint from $S^k\times *\in S^k\times S^{n-k}$ (when $2k<n$, we can always do that by slightly perturbing the attaching region), then in the dotted version, the $S^k$ can be seen in $M\setminus \nu (\partial D^{n-k})$. And whenever $S^k$ goes through the belt sphere $*\times S^{n-k}$, in the dotted version, $S^k\subset M\setminus \nu (\partial D^{n-k})$ intersects with that $D^{n-k}$. In short, the dotted version of a $k$-handle can be described as:

\begin{definition}
    For a $(n+1)$-dimensional cobordism $Z$ from $\partial_- Z$ to $\partial_+ Z=M$ with $M$ connected, the \emph{dotted version of describing a new cobordism} $Z_1=Z\cup h_k$ with $h_k$ a \emph{trivial} $k$-handle, is by choosing an embedding $\beta^\bullet :D^{n-k}\hookrightarrow M\times 1$ with $\beta:=\partial D^{n-k}$ denoting a dotted $(n-k-1)$-sphere, then $Z_1$ is diffeomorphic to $Z\setminus \nu \beta^\bullet _{in}$ with $\beta_{in}^\bullet : D^{n-k}\hookrightarrow Z$ obtained by pushing interior of $\beta^\bullet (D^{n-k})$ into $Z$ a bit. Any framed embedded $S^k$ in $M\setminus \nu \beta$ represents a new $(k+1)$-handle $h_{k+1}$ on $Z_1$. Whenever the attaching sphere of $h_{k+1}$ intersects $\beta^\bullet$ in the dotted version (or equivalently, it links with $\beta$), it runs over the belt sphere of $h_k$.
\end{definition}

Now if we are giving \emph{both} data for a pair of trivial $(k,n-k-1)$ handles, we can perform the \emph{dotted and 0-framed replacement}:

\begin{proposition}
    For an $(n+1)$-dimensional cobordism $Z$ with $\partial_+ Z=M$, suppose we are given $\beta^\bullet: D^{n-k}\hookrightarrow M$, $\gamma^\bullet: D^{k+1}\hookrightarrow M$ with $\beta,\gamma$ the corresponding boundary embeddings. If $\gamma\cap \beta=\emptyset$, then $\partial_+(Z\setminus \nu \beta^\bullet_{in} \cup _{(\gamma, 0)}  h_{k+1})=\partial_+(Z\setminus \nu \gamma^\bullet _{in} \cup_{(\beta, 0)} h_{n-k})  $. Here $(\gamma, 0)$ means that the attaching sphere of $h_{k+1}$ is $\gamma$ and the framing is induced by $\nu (\gamma, \gamma^\bullet)\oplus \nu (\gamma^\bullet, M)|_{\gamma}$, similar for $(\beta,0)$.
\end{proposition}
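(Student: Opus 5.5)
Both upper boundaries can be computed as surgeries on $M$, and I will show that they coincide by identifying the two surgeries with the same abstract manifold; this is the higher dimensional form of swapping a dotted circle and a $0$-framed unknot.

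By the dotted description of a trivial handle (the definition above), $Z\setminus\nu\beta^\bullet_{in}$ is $Z$ with a trivial $k$-handle attached. Its upper boundary is therefore obtained from $M$ by surgery on the framed $(k-1)$-sphere $\partial D^k$, where $D^k$ is a small disk meeting $\beta^\bullet$ transversely in one point and the framing comes from $D^k$. Equivalently, $\partial_+(Z\setminus\nu\beta^\bullet_{in}) = (M\setminus\nu\beta^\bullet)\cup_\partial (\text{cap})$, the standard model $M_\beta$ in which $\nu\beta^\bullet\cong D^{n-k}\times D^k$ is replaced by $S^{n-k-1}\times D^{k+1}$. Attaching $h_{k+1}$ along $(\gamma,0)$ then does a further surgery on $\gamma\subset M\setminus\nu\beta$. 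Since $\gamma\cap\beta=\emptyset$, after isotoping $\gamma^\bullet$ rel boundary I may assume $\gamma^\bullet\cap\beta^\bullet$ is controlled. The simplest case is $\gamma^\bullet\cap\beta^\bullet=\emptyset$, but I should allow $\gamma^\bullet$ to meet $\beta^\bullet$ in its interior, because the statement only assumes the boundaries are disjoint.

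The key step is to write both sides as
\[
\bigl(M\setminus(\nu\beta\cup\nu\gamma)\bigr)\cup(\text{two standard pieces}).
\]
In the first construction, surgery on the dotted sphere $\beta$ together with $0$-framed surgery on $\gamma$ amounts to surgery on the framed sphere $\beta$ with the framing induced by $\beta^\bullet$: trading $\nu\beta^\bullet_{in}$ for a $k$-handle is exactly $0$-framed surgery on $\beta$ seen from the top boundary. It is followed by $0$-framed surgery on $\gamma$. In the second construction the order is reversed: first $0$-framed surgery on $\gamma$ via the dotted disk $\gamma^\bullet$, then $0$-framed surgery on $\beta$.

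I will check that for a boundary, removing $\nu D^{n-k}_{in}$ has the same effect on $\partial_+$ as a $0$-framed surgery on $\partial D^{n-k}$. Both replace $S^{n-k-1}\times D^{k+1}=\nu\beta$ by $D^{n-k}\times S^k$, glued by the framing that $\beta^\bullet$ induces. Both sides are then the result of $0$-framed surgery on the disjoint link $\beta\sqcup\gamma$ in $M$, and surgeries on disjoint spheres commute. Hence the two upper boundaries are diffeomorphic, and canonically so away from $\nu\beta\cup\nu\gamma$.

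The main obstacle is framing bookkeeping when $\gamma^\bullet$ meets $\beta^\bullet$ in its interior, or more generally when $\gamma$ links $\beta$. In that case, in $M_\beta$ the framing of $\gamma$ must still be the one induced from $\gamma^\bullet$ in $M$, since the statement specifies it via $\nu(\gamma,\gamma^\bullet)\oplus\nu(\gamma^\bullet,M)|_\gamma$. I will argue that framings are determined locally near $\gamma$, and $\gamma$ avoids $\nu\beta$, so the gluing data agree in both descriptions. This reduces the claim to the commutation of surgeries on the disjoint link, with no global condition on the disks needed.
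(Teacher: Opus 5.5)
Your argument is essentially the paper's: both upper boundaries are identified with $(M\setminus(\nu\beta\cup\nu\gamma))\cup_{(\beta,0)}D^{n-k}\times S^{k}\cup_{(\gamma,0)}D^{k+1}\times S^{n-k-1}$, i.e.\ $0$-framed surgery on the disjoint pair $\beta\sqcup\gamma$. Your remark that the framings depend only on collars of $\beta$ and $\gamma$ in their disks is exactly why interior intersections of $\gamma^\bullet$ with $\beta$, or of $\beta^\bullet$ with $\gamma$, do not matter.

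One slip needs fixing. In your first paragraph, replacing $\nu\beta^\bullet\cong D^{n-k}\times D^k$ by $S^{n-k-1}\times D^{k+1}$ does not even match boundaries ($S^{n-1}$ versus $S^{n-k-1}\times S^k$). The correct statement, which you give later and which is the one your argument actually uses, is that $\nu\beta\cong S^{n-k-1}\times D^{k+1}$ is replaced by $D^{n-k}\times S^{k}$.
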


\begin{proof}
    Both sides are $(M\setminus (\nu \beta \cup \nu \gamma))\cup_{(\beta,0)} D^{n-k}\times S^{k} \cup _{(\gamma, 0)} D^{k+1}\times S^{n-k-1}$.
\end{proof}

\begin{remark}
    Later on in the dotted version, when we say the attaching region $S^{i-1}\times D^{n-i+1}$ of some handle $h_i$ is \emph{0-framed}, we will specify a $D^i\hookrightarrow M$ with the attaching sphere $S^{i-1}=\partial D^i$, and the framing is naturally induced by $\nu (S^{i-1},D^i)\oplus \nu(D^i,M)|_{S^{i-1}}$. Usually the $D^i$ is clear from the context.
\end{remark}

\begin{remark} \label{rmk:orientation-reversing-cobordism}
    Let $Z'=Z\setminus \nu \beta^\bullet_{in} \cup _{(\gamma, 0)}  h_{k+1}$ and $Z''=Z\setminus \nu \gamma^\bullet _{in} \cup_{(\beta, 0)} h_{n-k}$. Note that the two cobordisms result in the same boundary manifolds but they are different cobordisms themselves. In fact, when $Z=M\times I$, we can embed $Z'$ into a bigger $ M\times I$ such that $Z'\cup_{\partial_+ Z'=\partial_-\bar{Z''}} \bar{Z''}=M\times I$ where $\bar{Z''}$ denotes the orientation-reversing cobordism from $\partial_+ Z''$ to $\partial_- Z''$ (see \Cref{fig:orientation-reversing-cobordism}).

    \begin{figure}[!ht]
        \centering
        \includegraphics[width=0.3\textwidth]{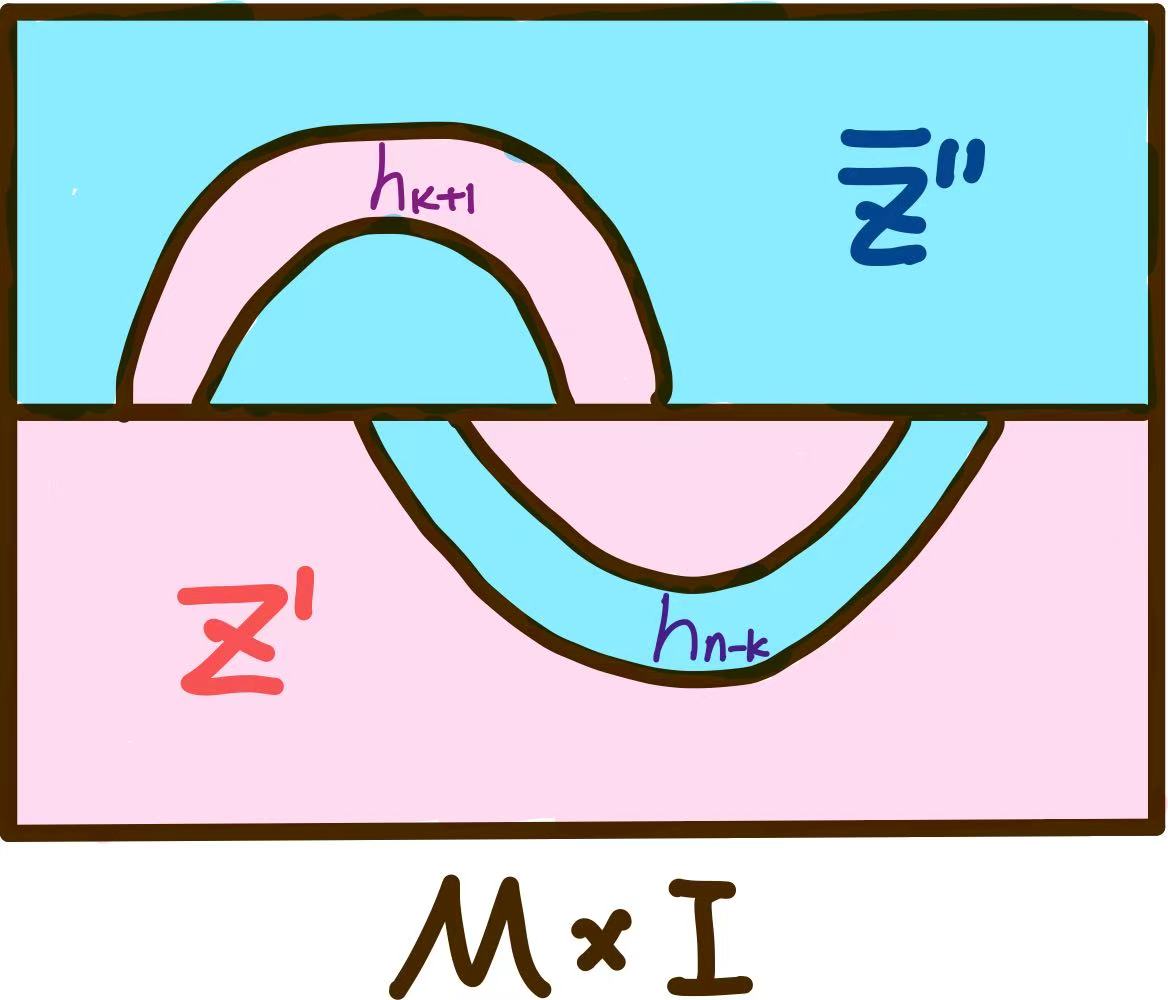}
        \caption{$Z'\cup \bar{Z}''=M\times I$}
        \label{fig:orientation-reversing-cobordism}
    \end{figure}
\end{remark}

Recall what we obtained in the last section: For any half-unknotted barbell $\beta=\beta_{i,n-i}=(R,S,\gamma)$ in $M^n$, we find a loop of framed $S^{i-1}$, i.e. an element in $\pi_1(\Emb(S^{i-1}\times D^{n-i+1}, M\# S^{i-1}\times S^{n-i+1}),*)$ which represents (by the map $\text{ps}_{\nu S}$) that barbell diffeomorphism. And thus, it gives a Cerf diagram containing a single eye of $(i-1,i)$-handle pair which results in the barbell diffeomorphism. Since in the definition of Hatcher-Wagoner invariants, we need the simple connectivity for both the dual sphere of the $(i-1)$-handle and the attaching sphere of the $i$-handle, i.e. $i-1\geq 2, n-i+1\geq 2$, thus when $i=2$, we need to find another pseudo-isotopy resulting in the same barbell diffeomorphism with the Cerf diagram being a single eye of higher index handle pair. As above we see that doing \emph{dotted and 0-framed replacement} allows us to change a $(k,k+1)$-handle pair to a $(n-k-1,n-k)$-handle pair, so the natural idea is to do the \emph{one-parameter version of dotted and 0-framed replacement}. To do that, we need the following lemma (this appeared in the work of Gay in \cite{Gay_2025} when $M=S^4$ and $k=1$, and there Gay was going from the opposite direction, namely, to change a certain eye of (2,3)-handle pair into an eye of (1,2)-handle pair, but the idea of the proof is totally the same):

\begin{lemma} \label{lem:one-parameter-version-of-dotted-and-0-framed-replacement}
    Consider a one-parameter family of embeddings: $\beta^\bullet_t: D^{n-k}\hookrightarrow M$(denote $\beta_t=\partial\beta_t^\bullet$) and $\gamma_t: S^{k}\hookrightarrow M, t\in [0,1]$ with the following properties:

    \begin{enumerate}[label=\em(\arabic*)]
        \item $\gamma_0$ extends to $\gamma_0^\bullet: D^{k+1}\hookrightarrow M$, i.e. $\gamma_0^\bullet|_{\partial D^{k+1}}=\gamma_0$, moreover, $\gamma_0^\bullet$ intersects $\beta_0$ transversely at a single point.
        \item $\beta_1^\bullet =\beta_0^\bullet$ and $\gamma_1=\gamma_0$.
        \item For all $t\in [0,1]$, $\gamma_t \cap \beta_t=\emptyset$.
        \item $\gamma_i$ intersects $\beta_i^\bullet$ transversely at a single point, for $i\in\{0,1\}$.
    \end{enumerate}

    Then there exists an extension of $\beta_t^\bullet$ and $\gamma_t$ to $t\in [0,3]$ and a one-parameter family of embeddings $\gamma_t^\bullet: D^{k+1}\hookrightarrow M, t\in [0,3]$ satisfying:

    \begin{enumerate}[label=\em(\arabic*)]
        \item For all $t\in [0,1]$, $\gamma_t$ and $\beta_t^\bullet$ stay the same as given.
        \item For $t\in[0,3]$, $\partial\gamma_t^\bullet=\gamma_t$.
        \item $\beta_3^\bullet=\beta_0^\bullet$ and $\gamma_3^\bullet=\gamma_0^\bullet$.
        \item For all $t\in [1,3]$, $\gamma_t$ intersects $\beta_t^\bullet$ transversely at a single point.
        \item $\beta_0=\beta_3$ intersects $\gamma_0^\bullet =\gamma_3^\bullet$ transversely at a single point.
        \item The path $\gamma_t^\bullet, t\in [0,3]$ is null homotopic rel $t\in \{0,3\}$ in $\Emb(D^{k+1}, M)$. \qedhere
    \end{enumerate}
\end{lemma}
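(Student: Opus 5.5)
The plan is to follow the strategy of Gay's Lemma 17 in \cite{Gay_2025}. First, use the given data on $[0,1]$ to build a path of spanning disks $\gamma_t^\bullet$ with $\partial\gamma_t^\bullet=\gamma_t$. Then use the segments $[1,2]$ and $[2,3]$ to return $\beta_t^\bullet$ and $\gamma_t$ to their initial positions. Finally, check that the resulting loop of disks is null-homotopic.

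\textbf{Step 1: disks on $[0,1]$.} Apply isotopy extension to the family $\gamma_t$ (an isotopy of $S^k$ in $M$) to get ambient diffeomorphisms $\Phi_t$ with $\Phi_0=\id$ and $\Phi_t\circ\gamma_0=\gamma_t$. Set $\gamma_t^\bullet=\Phi_t\circ\gamma_0^\bullet$ for $t\in[0,1]$. This already satisfies condition (2) on $[0,1]$. We make no claim about how $\gamma_t^\bullet$ meets $\beta_t$; hypothesis (3) only controls the boundary $\gamma_t$. At $t=1$ we have $\partial\gamma_1^\bullet=\gamma_1=\gamma_0$, but $\gamma_1^\bullet$ may differ from $\gamma_0^\bullet$ and may meet $\beta_1=\beta_0$ in many points.

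\textbf{Step 2: moving the disk back on $[1,2]$.} Keep $\gamma_t=\gamma_0$ fixed and use hypothesis (4) at $t=1$: $\gamma_0$ meets $\beta_0^\bullet$ transversely in a single point $x$. Near $x$, $\beta_0^\bullet$ looks like a standard $D^{n-k}$ and $\gamma_0$ like a small $S^k$ linking nothing else. I would move $\beta_t^\bullet$ on $[1,2]$ by an isotopy that drags $\beta^\bullet$ along the disk $\gamma_1^\bullet$. Concretely, shrink $\beta_t^\bullet$ toward a thin neighbourhood of $x$, "sliding" across $\gamma^\bullet$ and through its interior. This is the dotted/0-framed swap: because $\gamma$ is geometrically dual to $\beta^\bullet$, any intersection of $\beta$ with $\gamma_1^\bullet$ can be removed by pushing $\beta$ off the disk along the dual direction provided by $\gamma$. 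Throughout, $\gamma_t$ keeps meeting $\beta_t^\bullet$ in exactly $x$, giving condition (4). At time $2$ we get a $\beta_2^\bullet$ with $\beta_2\pitchfork\gamma_1^\bullet$ a single point.

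\textbf{Step 3: returning on $[2,3]$ and null-homotopy.} Now both $(\beta_2^\bullet,\gamma_1^\bullet)$ and $(\beta_0^\bullet,\gamma_0^\bullet)$ are "standard dual pairs" with the same boundary sphere $\gamma_0$. On $[2,3]$, run the inverse of the $[0,1]$ diffeomorphisms $\Phi_t^{-1}$ applied to $\beta^\bullet$ in reverse order, while simultaneously isotoping $\gamma^\bullet$ back to $\gamma_0^\bullet$. Since $\gamma_t^\bullet=\Phi_t\gamma_0^\bullet$ on $[0,1]$, the composite path of disks is essentially $\Phi_t\gamma_0^\bullet$ followed by its reverse. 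It is therefore null-homotopic rel endpoints in $\Emb(D^{k+1},M)$, which gives condition (6). Meanwhile $\beta^\bullet$ is transported by the same diffeomorphisms, so the single transverse intersection with $\gamma$ persists (conditions (4),(5)), and $\beta_3^\bullet=\beta_0^\bullet$ (condition (3)).

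\textbf{Main obstacle.} The hard part is arranging Steps 2–3 so that $\gamma_t\cap\beta_t^\bullet$ stays exactly one point while $\beta^\bullet$ is carried back. Transporting $\beta^\bullet$ by $\Phi_t^{-1}$ keeps $\gamma$ and $\beta^\bullet$ only in the relative position they had under $\Phi$, and that relative position need not be a single intersection in the middle of $[0,1]$. I would first try to shrink $\beta^\bullet$ to a tiny disk transverse to $\gamma$ near $x$; this is possible because $\beta^\bullet$ is a disk. The tiny disk can then be carried along by any isotopy of $\gamma$, since only a neighbourhood of one point of $\gamma$ is involved. Finally it is re-expanded to $\beta_0^\bullet$ at the end, reversing the shrinking.
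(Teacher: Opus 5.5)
Your three ingredients are the paper's. You carry $\gamma_0^\bullet$ out by an ambient isotopy, bring it back along the reversed isotopy while moving $\beta^\bullet$ by the same ambient diffeomorphisms, and repair the final mismatch of $\beta^\bullet$ by shrinking and expanding a normal disk of $\gamma_0$. The paper transports first and shrinks afterwards. On $[1,2]$ it sets $\gamma_t^\bullet=\phi_t(\gamma_1^\bullet)=\gamma_{2-t}^\bullet$ and $\beta_t^\bullet=\phi_t(\beta_1^\bullet)$. On $[2,3]$, with $\gamma_t^\bullet=\gamma_0^\bullet$ frozen, it shrinks $\beta_2^\bullet$ inside itself to a standard normal ball of $\gamma_0$ and expands to $\beta_0^\bullet$. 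You shrink first and then transport; that reordering is harmless.

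Your Step 3 as written is wrong, however. Moving everything by $\Phi_{3-t}\circ\Phi_1^{-1}$ does end with $\gamma_3^\bullet=\gamma_0^\bullet$. But it ends with $\beta_3^\bullet=\Phi_1^{-1}(\beta_2^\bullet)$, a small disk meeting $\gamma_0$ transversely at $x$, which is not $\beta_0^\bullet$ in general. So condition (3) fails at time $3$. You need the re-expansion from your last paragraph as an explicit extra segment, followed by a reparametrization to $[0,3]$:
\begin{itemize}
\item Keep $\gamma_t^\bullet=\gamma_0^\bullet$ throughout.
\item Straighten $\Phi_1^{-1}(\beta_2^\bullet)$, through small disks meeting $\gamma_0$ transversely only at $x$, to a small sub-disk of $\beta_0^\bullet$.
\item Expand inside $\beta_0^\bullet$.
\end{itemize}
``Reversing the shrinking'' is not literally available, since the transported tiny disk is no longer a sub-disk of $\beta_0^\bullet$.

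Your stated main obstacle is misdiagnosed. On the return leg, $\gamma_t$ and $\beta_t^\bullet$ are the images of their time-$2$ positions under one and the same ambient diffeomorphism. Hence $|\gamma_t\cap\beta_t^\bullet|$ stays equal to its value at time $2$, namely $1$. What happened in the middle of $[0,1]$ is irrelevant, since $\beta^\bullet$ was never moved by $\Phi_t$ there. So the pre-shrinking is not needed for condition (4) at all; you could transport $\beta_1^\bullet$ itself, as the paper does. The only real issue is the endpoint match $\beta_3^\bullet=\beta_0^\bullet$, which is exactly what Step 3 overlooks.

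Similarly, your appeal in Step 2 to ``pushing $\beta$ off $\gamma_1^\bullet$ along the dual direction'' is neither justified nor needed. Shrinking within $\beta_0^\bullet$ keeps $\gamma_0\cap\beta_t^\bullet=\{x\}$ simply because of hypothesis (4). Condition (5) is just hypothesis (1) once (3) holds.
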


\begin{proof}
    First we construct $\gamma_t^\bullet$ for $t\in [0,1]$: Since $\beta_t \cap \gamma_t=\emptyset$, choose an isotopy $\psi_t: M\to M$ which sends $\gamma_0$ to $\gamma_t$ and sends $\beta_0$ to $\beta_t$, then let $\gamma_t^\bullet :=\psi_t(\gamma_0^\bullet)$. 

    But now $\gamma_1^\bullet$ may not be equal to $\gamma_0^\bullet$. Then we construct $\gamma_t^\bullet$ and $\beta_t^\bullet$ for $t\in [1,2]$ such that $\gamma_0^\bullet=\gamma_2^\bullet$: Let $\phi_t: M\to M$ be the isotopy such that $\phi_t(\gamma_{1}^\bullet)=\gamma_{2-t}^\bullet$ then define $\gamma_t^\bullet=\phi_t(\gamma_1^\bullet)$ and $\beta_t^\bullet=\phi_t(\beta_1^\bullet)$.

    But now $\beta_2^\bullet$ may not be equal to $\beta_0^\bullet$. Then we construct $\gamma_t^\bullet$ and $\beta_t^\bullet$ for $t\in [2,3]$ such that $\gamma_t^\bullet=\gamma_2^\bullet=\gamma_0^\bullet$ for $t\in [2,3]$ and $\beta_3^\bullet=\beta_0^\bullet$: By condition (4), $\beta_0^\bullet$ is a normal disk of $\gamma_0$ and $\beta_1^\bullet$ is a normal disk of $\gamma_1$, since $\gamma_2=\phi_2(\gamma_1)=\gamma_0$ and $\beta_2^\bullet =\phi_2(\beta_1^\bullet)$, so $\beta_2^\bullet$ is also a normal disk of $\gamma_2=\gamma_0$. That means, both $\beta_2^\bullet$ and $\beta_0^\bullet$ are normal disks of $\gamma_0$, then we can isotope $\beta_2^\bullet$ to $\beta_0^\bullet$ during $t\in [2,3]$, one way is to shrink $\beta_2^\bullet$ to $\beta_t^\bullet$ inside $\beta_2^\bullet$ to a standard normal ball of $\gamma_0$ and then expand back to $\beta_0^\bullet$.

    Then from the construction one can see directly that conditions (1), (2), (3), (5) are satisfied. For condition (4), for $t\in [1,2], |\gamma_t\cap \beta_t^\bullet|=|\phi_t\gamma_1\cap \phi_t\beta_1^\bullet|=|\gamma_1\cap \beta_1^\bullet|=1$, for $t\in [2,3]$, $\beta_t^\bullet$ is always a normal disk of $\gamma_t=\gamma_0$ so satisfied. For condition (6), by $\gamma_t^\bullet=\gamma_{2-t}^\bullet, t\in [0,2]$ and $\gamma_t^\bullet=\gamma_0^\bullet, t\in [2,3]$, then of course the condition is satisfied.
\end{proof}

\begin{proposition}[one-parameter version of dotted and 0-framed replacement] \label{prop:one-parameter-version-of-dotted-and-0-framed-replacement}
    Suppose that $\beta_t^\bullet: D^{n-k}\hookrightarrow M$, $\gamma_t: S^k\hookrightarrow M$, $t\in [0,1]$ satisfying the four conditions in the above lemma, moreover, $\beta_t^\bullet=\beta_0^\bullet$, and $\gamma_0^\bullet$ is a standard normal disk of $\beta_0$. By using the above lemma, we extend $\beta_t^\bullet$ and $\gamma_t$ to $t\in[0,3]$ and get $\gamma_t^\bullet$ with $\gamma_t=\partial \gamma_t^\bullet$ for $t\in[0,3]$. We further assume $\gamma_1^\bullet$ and $\gamma_0^\bullet$ yield the same framing for $\gamma_1=\gamma_0$. Since $\beta_0^\bullet=\beta_3^\bullet$ and $\gamma_0^\bullet=\gamma_3^\bullet$,

    \begin{enumerate}[label=\em(\arabic*)]
        \item For $t\in [0,1]$, regard $\beta_t^\bullet=\beta_0^\bullet$ as dotted version of a trivial $k$-handle, regard $\gamma_t=\partial\gamma_t^\bullet$ as a loop of 0-framed $(k+1)$-handle. Then it gives $\mathcal{H}_1\in \pi_0\mathcal{P}$ with Cerf diagram being a single eye of $(k,k+1)$-handle pair and results in $f_{\mathcal{H}_1}\in \pi_0\Diff(M,\partial)$.
        \item For $t\in [0,3]$, regard $\gamma_t^\bullet$ as dotted version of a trivial $(n-k-1)$-handle (the $(n-k-1)$-handle can be made not moving for $t\in[0,3]$ by condition (6) in above lemma), regard $\beta_t=\partial\beta_t^\bullet$ as a loop of framed $(n-k)$-handle. Then it gives $\mathcal{H}_2\in \pi_0\mathcal{P}$ with Cerf diagram being a single eye of $(n-k-1,n-k)$-handle pair and results in $f_{\mathcal{H}_2}\in \pi_0\Diff(M,\partial)$.
    \end{enumerate}

    Then:  $[f_{\mathcal{H}_1}]=[f_{\mathcal{H}_2}]\in \pi_0\Diff(M,\partial)$.
\end{proposition}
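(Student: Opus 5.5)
We compare the two loops of handle decompositions through their level-by-level boundary manifolds, which is how the resulting diffeomorphism $f_{\mathcal{H}}$ is read off in \cref{sec:cerf-theory-and-parameterised-handle-constructions}. For every $t\in[0,3]$ we have the pair $(\beta_t^\bullet,\gamma_t^\bullet)$ with $\gamma_t=\partial\gamma_t^\bullet$ disjoint from $\beta_t=\partial\beta_t^\bullet$. For $t\in[0,1]$ this is condition (3); for $t\in[1,3]$ it holds because $\gamma_t$ meets $\beta_t^\bullet$ in one transverse point, so $\gamma_t$ is a normal sphere of $\beta_t$. The non-parametrized dotted and 0-framed replacement proposition then gives, for each $t$, a canonical identification
\[
\partial_+\bigl(M\times I\setminus\nu\beta^\bullet_{t,in}\cup_{(\gamma_t,0)}h_{k+1}\bigr)=(M\setminus(\nu\beta_t\cup\nu\gamma_t))\cup D^{n-k}\times S^k\cup D^{k+1}\times S^{n-k-1}=\partial_+\bigl(M\times I\setminus\nu\gamma^\bullet_{t,in}\cup_{(\beta_t,0)}h_{n-k}\bigr).
\]
This identification is natural in $t$, since both sides are built from the same smoothly varying data $(\nu\beta_t,\nu\gamma_t)$ with 0-framings induced by $\beta_t^\bullet,\gamma_t^\bullet$.

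\textbf{Step 1: interpret each family as a loop.} In $\mathcal{H}_1$ only the $(k+1)$-handle moves, and only for $t\in[0,1]$. I extend $\mathcal{H}_1$ to $[0,3]$ by the family $(\beta_t^\bullet,\gamma_t)$, $t\in[1,3]$. Throughout $[1,3]$ the spheres $\gamma_t$ and $\beta_t$ are geometrically dual, so the pair stays in cancelling position. Moving the dotted disk $\beta_t^\bullet$ on $[1,3]$ is realized by an ambient isotopy of $M$ carrying everything along. Hence this extension changes neither the class in $\pi_1(\mathcal F,\mathcal E)$ nor the resulting diffeomorphism. To make this precise, I use the isotopies $\phi_t$ from the lemma and note that on $[1,3]$ the family is ambient-isotopic to the constant one. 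Dually, $\mathcal{H}_2$ lives on $[0,3]$. By condition (6) of the lemma, the dotted $(n-k-1)$-handle $\gamma_t^\bullet$ may be taken constant after composing with a null-homotopy. On $[0,1]$ the $(n-k)$-handle $\beta_t=\beta_0$ is constant, and it moves only on $[1,3]$.

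\textbf{Step 2: compare boundary monodromies.} By the description in \cref{sec:cerf-theory-and-parameterised-handle-constructions}, $f_{\mathcal{H}}$ is the monodromy $\phi_{2,1}$ of the family of top boundaries $X_{2,t}\cong M$, where $X_{2,t}$ is $M$ surgered along the two 0-framed spheres. The identification above holds for every $t$ and sends the top level of $\mathcal{H}_1$ at time $t$ to that of $\mathcal{H}_2$ at time $t$. The two monodromies are therefore conjugate by the identification at $t=0$ and $t=3$. These identifications coincide because $\beta_3^\bullet=\beta_0^\bullet$ and $\gamma_3^\bullet=\gamma_0^\bullet$; the framing hypothesis that $\gamma_1^\bullet$ and $\gamma_0^\bullet$ induce the same framing on $\gamma_1$ is what makes the 0-framings close up at $t=1$. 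Finally, at the endpoints the identification with $M$ itself must be the same on both sides. This follows because at $t=0$ the pair is standard ($\gamma_0^\bullet$ is a standard normal disk of $\beta_0$), so both surgeries cancel in the same local ball, giving the same diffeomorphism to $M$ supported there.

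\textbf{Main obstacle.} The hardest part is Step 2's naturality: checking that the pointwise replacement identifications assemble into a continuous family compatible with the identifications $\phi_{m,t}$ used to define $f_{\mathcal{H}}$. This must hold even though $\mathcal{H}_2$ used the null-homotopy from condition (6) to freeze its dotted handle. I would first handle this by realizing both families by a single ambient isotopy $\Psi_t$ of $M$ (built from $\psi_t$, $\phi_t$ and the shrinking isotopy) with $\Psi_t(\beta_0^\bullet,\gamma_0^\bullet)=(\beta_t^\bullet,\gamma_t^\bullet)$ wherever possible. Both monodromies are then $\Psi_3$ restricted to the complement of $\nu\beta_0\cup\nu\gamma_0$, extended by the standard cancellation. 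The replacement of the $\gamma^\bullet$-path by a constant path contributes an isotopically trivial correction by condition (6).
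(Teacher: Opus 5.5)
Your proposal is correct and follows the paper's proof. The paper likewise regards the whole $[0,3]$ family in $(k,k+1)$ form as $\mathcal{H}_1$ concatenated with the $[1,3]$ piece, which is trivial because the pair stays in cancelling position there, and then obtains $\mathcal{H}_2$ from it by pointwise dotted and 0-framed replacement, so the two loops have the same top-boundary family and hence the same resulting diffeomorphism. Your extra discussion of naturality in $t$ and of the endpoint identifications with $M$ only makes explicit what the paper leaves implicit; note that the triviality on $[1,3]$ comes from the cancelling position itself, not from the family being ambient-isotopic to a constant one.
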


\begin{proof}
    First consider $\beta_t^\bullet, \gamma_t=\partial\gamma_t^\bullet,t\in[0,3]$, this loop of handle decompositions of $M\times I$ yields $\mathcal{H}_0\in \pi_0\mathcal{P}$ with Cerf diagram being a single eye of $(k,k+1)$-handle pair. And by doing dotted and 0-framed replacement \emph{pointwise} for $t\in [0,3]$ from $\mathcal{H}_0$, we get another pseudo-isotopy which is exactly $\mathcal{H}_2\in \pi_0\mathcal{P}$, thus $f_{\mathcal{H}_0}=f_{\mathcal{H}_2}$. 
    
   But note that $\beta_1^\bullet=\beta_3^\bullet$ and $\gamma_1=\gamma_3$ with the same framing induced by $\gamma_1^\bullet$ and $\gamma_3^\bullet$, so for $t\in [1,3]$, $\beta_t^\bullet$ and $\gamma_t=\partial\gamma_t^\bullet$ also determine a loop of handle constructions for $M\times I$, thus a pseudo-isotopy $\mathcal{H}\in \pi_0\mathcal{P}$. But here $\beta_t^\bullet$ intersects $\gamma_t$ transversely at a single point for all $t\in [1,3]$, which means they are all at cancelling position for all $t$, thus $[\mathcal{H}]=[\id]\in \pi_0\mathcal{P}$. So $\beta_t^\bullet, \gamma_t=\partial\gamma_t^\bullet,t\in[0,3]$ together give a pseudo-isotopy $[\mathcal{H}_0=\mathcal{H}_1*\mathcal{H}]=[\mathcal{H}_1]\in \pi_0\mathcal{P}$.

   Thus $[f_{\mathcal{H}_1}]=[f_{\mathcal{H}_0}]=[f_{\mathcal{H}_2}]\in \pi_0\Diff(M,\partial)$.
\end{proof}

For any half-unknotted implanted barbell $\beta=\beta_{i,n-i}=(R,S,\gamma)$, from the last section, we get a loop of $(i-1,i)$-handle pair $g_\beta\in \pi_0\mathcal{P}$ resulting in the barbell diffeomorphism. By first changing to the dotted version and then applying \Cref{prop:one-parameter-version-of-dotted-and-0-framed-replacement}, we can see that there is a natural extension $\gamma_t^\bullet,t\in [0,1]$ by pulling $\gamma_0^\bullet$ along $\gamma_t$ in $T_R$, i.e. $\gamma_t^\bullet=\gamma_0^\bullet\cup_{s\in [0,t]} \gamma_s$. Thus $\gamma_1^\bullet=\gamma_0^\bullet \text{ tube}_{\gamma} R$. Thus framing of $ \gamma_1=\gamma_0$ induced by $\gamma_1^\bullet$ is the same as that induced by $\gamma_0^\bullet$ (For moving pictures of $\gamma_t^\bullet$ in the case of $\beta=\delta_k$, see \Cref{fig:movement-of-disk}). Thus we have:

\begin{corollary} \label{cor:changing-from-i-1-i-handle-pair-to-n-i-n-i-1-handle-pair}
    For any half-unknotted implanted barbell $\beta=\beta_{i,n-i}=(R,S,\gamma)$ in $M^n$, there exists $f_\beta\in \pi_0\mathcal{P}$ whose Cerf diagram contains only a single eye of $(n-i,n-i+1)$-handle pair resulting in that barbell diffeomorphism. In particular, the first Hatcher-Wagoner invariant $\Sigma(f_\beta)=0\in \Wh_2(\pi_1M)$.
\end{corollary}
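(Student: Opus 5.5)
The plan is to start from the corollary of the previous section, which supplies $g_\beta\in\pi_0\mathcal{P}$. Its Cerf diagram is a single eye of an $(i-1,i)$-handle pair, and $g_\beta$ results in the barbell diffeomorphism. The pair is encoded by the loop $[\nu T_R]\in\pi_1(\Emb(S^{i-1}\times D^{n-i+1}, M\# S^{i-1}\times S^{n-i+1}),*)$.

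I then pass to the dotted description with $k=i-1$. The trivial $(i-1)$-handle is recorded by a dotted disk $\beta_t^\bullet=\beta_0^\bullet: D^{n-i+1}\hookrightarrow M$. Since $S$ is unknotted, $S=\partial\beta_0^\bullet$, and the dotted sphere is the unknotted $S$ itself. The moving $i$-handle has attaching sphere $\gamma_t: S^{i-1}\hookrightarrow M\setminus \nu S$, the slice of $T_R$ at time $t$. Then $\gamma_0$ is a small meridian sphere of $S$ bounding a standard normal disk $\gamma_0^\bullet$, so condition (1) of \Cref{lem:one-parameter-version-of-dotted-and-0-framed-replacement} holds. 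Condition (2) holds because the loop closes up. Condition (3) holds because $T_R$ lies in the complement of $\nu S$ after the surgery. Condition (4) is the cancelling position at the endpoints: the meridian $\gamma_0$ meets $\beta_0^\bullet$ in one point.

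Next I verify the extra hypotheses of \Cref{prop:one-parameter-version-of-dotted-and-0-framed-replacement}. These are that $\beta_t^\bullet$ is constant, that $\gamma_0^\bullet$ is a standard normal disk, and that $\gamma_1^\bullet$ and $\gamma_0^\bullet$ induce the same framing on $\gamma_1=\gamma_0$. For the framing I use the explicit extension $\gamma_t^\bullet=\gamma_0^\bullet\cup\bigcup_{s\le t}\gamma_s$, obtained by sweeping along $T_R$. This gives $\gamma_1^\bullet=\gamma_0^\bullet\,\text{tube}_\gamma R$. The tubed disk agrees with $\gamma_0^\bullet$ near its boundary, and the framing of $\gamma_1$ is determined by a collar of the boundary in the disk together with the normal bundle there. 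Hence the two framings agree, and this also matches the framing of $\nu T_R$ induced by $\nu R$.

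This framing comparison is the step I expect to be the most delicate. It must hold for the frames, not only for the spheres. The framing of $\nu T_R$ restricted to $\gamma_1$ must agree with the $0$-framing from the disk, and otherwise $\mathcal{H}_1$ would not be $g_\beta$. My first approach is to check it in the standard model $S^1\times S^{i-1}\times D^{n-i}$, where everything is explicit.

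With these hypotheses verified, the proposition with $k=i-1$ gives $\mathcal{H}_1=g_\beta$ and $\mathcal{H}_2=:f_\beta$. Here $\mathcal{H}_2$ has a single eye of an $(n-k-1,n-k)=(n-i,n-i+1)$-handle pair, and $[f_{\mathcal{H}_2}]=[f_{\mathcal{H}_1}]$ is the barbell diffeomorphism.

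It remains to show $\Sigma(f_\beta)=0$. The invariant $\Sigma$ is computed from the Cerf graphic: it is built from the $\Wh_2$-element formed by the handle-sliding and crossing data along the path. A single eye has no crossings and no handle slides among handles of equal index. The pair is born in cancelling position and dies in cancelling position, and the only data is the incidence along the eye. Following Hatcher--Wagoner, the associated $\Wh_2$ word is then empty, or equivalently a product of trivial elementary matrices, so $\Sigma(f_\beta)=0$.
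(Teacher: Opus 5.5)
Your proposal is correct and follows the paper's route: start from $g_\beta$, pass to the dotted picture with $\beta_t^\bullet=\beta_0^\bullet$ bounding $S$, sweep $\gamma_0^\bullet$ along $T_R$ so that $\gamma_1^\bullet=\gamma_0^\bullet\,\text{tube}_\gamma R$, apply \Cref{prop:one-parameter-version-of-dotted-and-0-framed-replacement} with $k=i-1$, and get $\Sigma(f_\beta)=0$ from the absence of handle slides. One correction to your framing justification: the framing a disk induces on its boundary is not determined by a collar, since tubing into a sphere whose normal bundle has nontrivial clutching class in $\pi_{i-1}(SO_{n-i})$ changes it. The framings agree because $R$ is framed (equivalently, because the framing of $\nu T_R$ is carried continuously along the sweep), which is exactly what your proposed check in the standard model would confirm.
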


\begin{remark}
    We are not going into the definition of the first Hatcher-Wagoner invariant $\Sigma:\pi_0\mathcal{P}\to \Wh_2(\pi_1M)$(to see a clear definition, read \cite{singh2022pseudoisotopiesdiffeomorphisms4manifolds}), but briefly speaking, it records the handle slides that happen during $\{f_t: M\times I\to I\}\in \pi_1(\mathcal{F},\mathcal{E})$. So when there is only a single eye of $(n-i, n-i+1)$-handle pair, there's at most one handle for each index, so no handle slides at all time.
\end{remark}

In Hatcher's work \cite[Lemma 4.3 and Duality Formula 4.4]{AST_1973__6__1_0}, he showed that there exists an involution $\bar{\cdot}$ on both $\Wh_2(\pi_1 M)$ and $\Wh_1(\pi_1 M; \mathbb{Z}_2\times \pi_2M)$ such that for any pseudo-isotopy $f$ of $M$, let $\bar{f}=(F_f^{-1}\times \id)\circ R\circ f\circ R$ where $R: X\times I\to X\times I: (x,s)\to (x,1-s)$, then $\Sigma(\bar{f})=(-1)^n\bar{\Sigma}(f)$, $\Theta(\bar{f})=(-1)^n\bar{\Theta}(f)$. Let $\mathcal{P}_0$ be the connected component of $\id\in\mathcal{P}=\mathcal{P}(M)$, then $\Theta: \ker\Sigma \to \Wh_1(\pi_1 M; \mathbb{Z}_2\times \pi_2M)/\Theta(\mathcal{P}_0\cap \ker \Sigma)$ are well-defined with $\Theta(\mathcal{P}_0\cap \ker \Sigma)\subset\chi(K_3\mathbb{Z}[\pi_1 M])$ since $f\in \mathcal{P}_0\Leftrightarrow \bar{f}\in \mathcal{P}_0$. Therefore $\bar{\cdot}$ induces an involution on both $\Wh_2(\pi_1 M)$ and $\Wh_1(\pi_1 M; \mathbb{Z}_2\times \pi_2M)/\Theta(\mathcal{P}_0\cap \ker \Sigma)$, i.e. $\overline{\Theta(\mathcal{P}_0\cap \ker \Sigma)}=\Theta(\mathcal{P}_0\cap \ker \Sigma)$.

\begin{proposition} \label{rmk:orientation-reversing-cobordism-for-hatcher-wagoner-invariants}
   For a half-unknootted barbell $\beta$ in $M^n$, we have $\Sigma(g_\beta)=(-1)^{n+1}\bar{\Sigma}(f_\beta)=0$, $\Theta(g_\beta)=(-1)^{n+1}\bar{\Theta}(f_\beta)$.
\end{proposition}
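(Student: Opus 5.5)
The plan is to show that $g_\beta$ and $\bar{f_\beta}$ agree in $\pi_0\mathcal{P}$ up to an element of $\pi_0\mathcal{P}$ whose invariants vanish. Combined with Hatcher's duality formula $\Sigma(\bar f)=(-1)^n\bar\Sigma(f)$ and $\Theta(\bar f)=(-1)^n\bar\Theta(f)$, this would give the claim, provided the extra sign $-1$ comes from an orientation reversal. The geometric input is \Cref{rmk:orientation-reversing-cobordism}: a cobordism $Z'$ built with a dotted $k$-handle and a $0$-framed $(k+1)$-handle, glued to the reverse $\bar{Z''}$ of the cobordism obtained by dotted/0-framed replacement, gives $M\times I$.

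\textbf{Step 1: compare the two loops pointwise.} Over $t\in[0,3]$, work with the loop $\mathcal{H}_0$ from the proof of \Cref{prop:one-parameter-version-of-dotted-and-0-framed-replacement}, which represents $g_\beta$ via $[\mathcal{H}_0]=[\mathcal{H}_1]$, and with its replacement $\mathcal{H}_2$, which represents $f_\beta$. For each $t$, $Z'_t\cup\bar{Z''_t}=M\times I$, so we get a one-parameter family of functions on $M\times I$ (after stacking into a bigger interval). In its lower half it has the $(i-1,i)$ eye of $g_\beta$. In its upper half it has the reversed $(n-i,n-i+1)$ eye of $f_\beta$; turning the Morse functions upside down makes the indices $(i-1,i)$, matching Hatcher's construction of $\bar f$ via $R$. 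Both halves bound the same level set at each time, namely the common boundary $\partial_+ Z'_t=\partial_+ Z''_t$. Since this level set is literally unchanged by the cobordism decomposition, the total family is trivial. It starts and ends at a cancelling position, and each $t$-slice is a product, so it represents $[\id]$ up to the correction term $F_f^{-1}\times\id$ appearing in the definition of $\bar f$. This should yield $g_\beta\circ \bar f_\beta^{\,\pm1}\simeq$ a pseudo-isotopy with trivial invariants, i.e.\ $g_\beta \sim \bar f_\beta^{-1}$ modulo $\mathcal{P}_0$.

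\textbf{Step 2: turn this into the formula.} Since $\Sigma$ and $\Theta$ are homomorphisms (on $\ker\Sigma$ for $\Theta$), Step 1 gives $\Theta(g_\beta)=-\Theta(\bar f_\beta)=-(-1)^n\bar\Theta(f_\beta)=(-1)^{n+1}\bar\Theta(f_\beta)$, and likewise for $\Sigma$. We have $\Sigma(f_\beta)=0$ by \Cref{cor:changing-from-i-1-i-handle-pair-to-n-i-n-i-1-handle-pair}. Alternatively, $\Sigma(g_\beta)=0$ follows directly because $g_\beta$ has a single eye and hence no handle slides. The error term lies in $\Theta(\mathcal{P}_0\cap\ker\Sigma)\subset\chi(K_3)$, which is why the identity is stated in the quotient, with the involution applied before quotienting as in \Cref{rmk:involution}.

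\textbf{Main obstacle.} The hard part is Step 1: making the gluing $Z'_t\cup\bar{Z''_t}$ vary smoothly in $t$, and pinning down the sign, i.e.\ whether one gets $\bar f_\beta$ or $\bar f_\beta^{-1}$. One must also account for the factor $F_f^{-1}\times\id$ correctly, since both loops end at the same barbell diffeomorphism $F$. I would first try to realize the whole picture as a two-parameter family and read off the composite $R\circ f_\beta\circ R$ directly. The goal would be to check that the induced map on the top level is $F^{-1}$, so that the composite with $g_\beta$ yields the identity diffeomorphism. That would reduce the sign question to Hatcher's conventions.
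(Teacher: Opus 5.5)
This is essentially the paper's proof. The paper likewise glues $Z_{\mathcal{H}_0}$ (representing $g_\beta$) to $\bar Z_{\mathcal{H}_2}$ (the reverse of its pointwise dotted/0-framed replacement, representing $f_\beta$) via \Cref{rmk:orientation-reversing-cobordism}, deduces $\Sigma(g_\beta)+\Sigma(\bar f_\beta)=0$ and $\Theta(g_\beta)+\Theta(\bar f_\beta)=0$ modulo $\Theta(\mathcal{P}_0\cap\ker\Sigma)$, and finishes with Hatcher's duality formula as in your Step 2; your sign worry also resolves as you guessed, since the upper half is literally the upside-down family of $f_\beta$, so the stacked loop is $g_\beta\cdot\bar f_\beta$ (its monodromy $F_\beta\circ F_\beta^{-1}$ is the identity) and hence $g_\beta\sim\bar f_\beta^{-1}$.

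Two small corrections: in the $(n+1)$-manifold $M\times I$ the reversed $(n-i,n-i+1)$ eye has indices $(i,i+1)$, not $(i-1,i)$, and being a product slice by slice holds for every pseudo-isotopy, so the actual reason the stacked loop is trivial (which the paper also leaves implicit) is that for each $t$ the handles of indices $i-1,i,i,i+1$ cancel crosswise (dotted $\beta_t^\bullet$ against the reversed handle on $\beta_t$, and the $i$-handle on $\gamma_t$ against the reversed dotted $\gamma_t^\bullet$), with the two middle index-$i$ handles independent because $\gamma_t\cap\beta_t=\emptyset$.
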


\begin{proof}
    Note that in the proof of the above proposition, we obtain $\mathcal{H}_2=\{H_{2,t},t\in [0,3]$ which is a loop of $(n-i,n-i+1)$-handle constructions of $M\times I\}$ by doing dotted and 0-framed replacement \emph{pointwise} from $\mathcal{H}_0=\{H_{0,t},t\in S^1$ which is a loop of $(i-1,i)$-handle constructions of $M\times I\}$. Denote $Z_{\mathcal{H}_0}$(resp. $Z_{\mathcal{H}_2}$) as the corresponding cobordism from $S^1\times M$ to $Y_\mathcal{H}=M\times I/((x,0)\sim (f_{\mathcal{H}_0}(x),1))$ (resp. $Y_{\mathcal{H}_2}=M\times I/((x,0)\sim (f_{\mathcal{H}_2}(x),1))$). Since $f_{\mathcal{H}_0}=f_{\mathcal{H}_2}$, by \Cref{rmk:orientation-reversing-cobordism} we know that $Z_{\mathcal{H}_0}\cup \bar{Z}_{\mathcal{H}_2}=S^1\times X\times I$, thus $0=\Sigma(\mathcal{H}_0)+\Sigma(\bar{\mathcal{H}_2})=\Sigma(\mathcal{H}_1)+\Sigma(\bar{\mathcal{H}_2})$. Similarly, $\Theta(\mathcal{H}_1)+\Theta(\bar{\mathcal{H}_2})=0\in \Wh_1(\pi_1 M; Z_2\times \pi_2 M)/\Theta(\mathcal{P}_0\cap \ker \Sigma)$. Thus $\Sigma(g_\beta)=-\Sigma(\bar{f}_\beta)=(-1)^{n+1}\bar{\Sigma}(f_\beta)=0$ and $\Theta(g_\beta)=-\Theta(\bar{f}_\beta)=(-1)^{n+1}\bar{\Theta}(f_\beta)\in \Wh_1(\pi_1 M; \mathbb{Z}_2\times \pi_2M)/\Theta(\mathcal{P}_0\cap \ker \Sigma)$, therefore $\Theta(g_\beta)=(-1)^{n+1}\bar{\Theta}(f_\beta)\in \Wh_1(\pi_1 M; \mathbb{Z}_2\times \pi_2M)/\chi(K_3\mathbb{Z}[\pi_1 M])$. 
\end{proof}

\begin{example}
    We apply the above procedure to Budney and Gabai's barbell diffeomorphisms $\delta_k$, changing its corresponding loop of (1,2)-handle pair which we obtained in the last section into the loop of (2,3)-handle pair.

    Recall the loop of (1,2)-handle constructions for Cerf diagram resulting in $\delta_k$ (again we let $k=4$ for simplicity), as shown in \Cref{fig:element-in-pi1-representing-barbell}.

    \begin{figure}[!ht]
        \centering
        \includegraphics[width=0.7\textwidth]{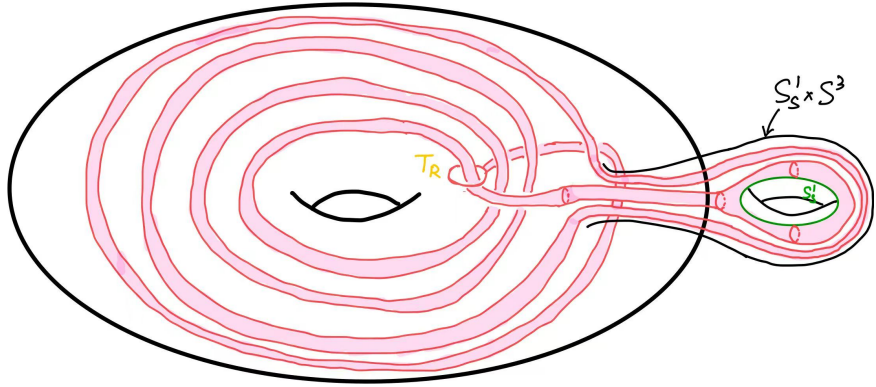}
        \caption{Element in $\pi_1(\Emb(S^1\times D^3, S^1\times D^3\# S^1\times S^3), *)$ which represents barbell $\delta_k$.}
        \label{fig:element-in-pi1-representing-barbell}
    \end{figure}

    Change it to the one-parameter dotted version, i.e.  $\beta_t^\bullet=\beta_0^\bullet: D^3\hookrightarrow S^1\times D^3$, $\gamma_t: S^1\hookrightarrow S^1\times D^3$, $\gamma_0=\gamma_1$ a standard meridian of $\beta_0=\partial\beta_0^\bullet$, $t\in [0,1]$ (see \Cref{fig:dotted-version-of-12-pair}).

    \begin{figure}[!ht]
        \centering
        \includegraphics[width=0.5\textwidth]{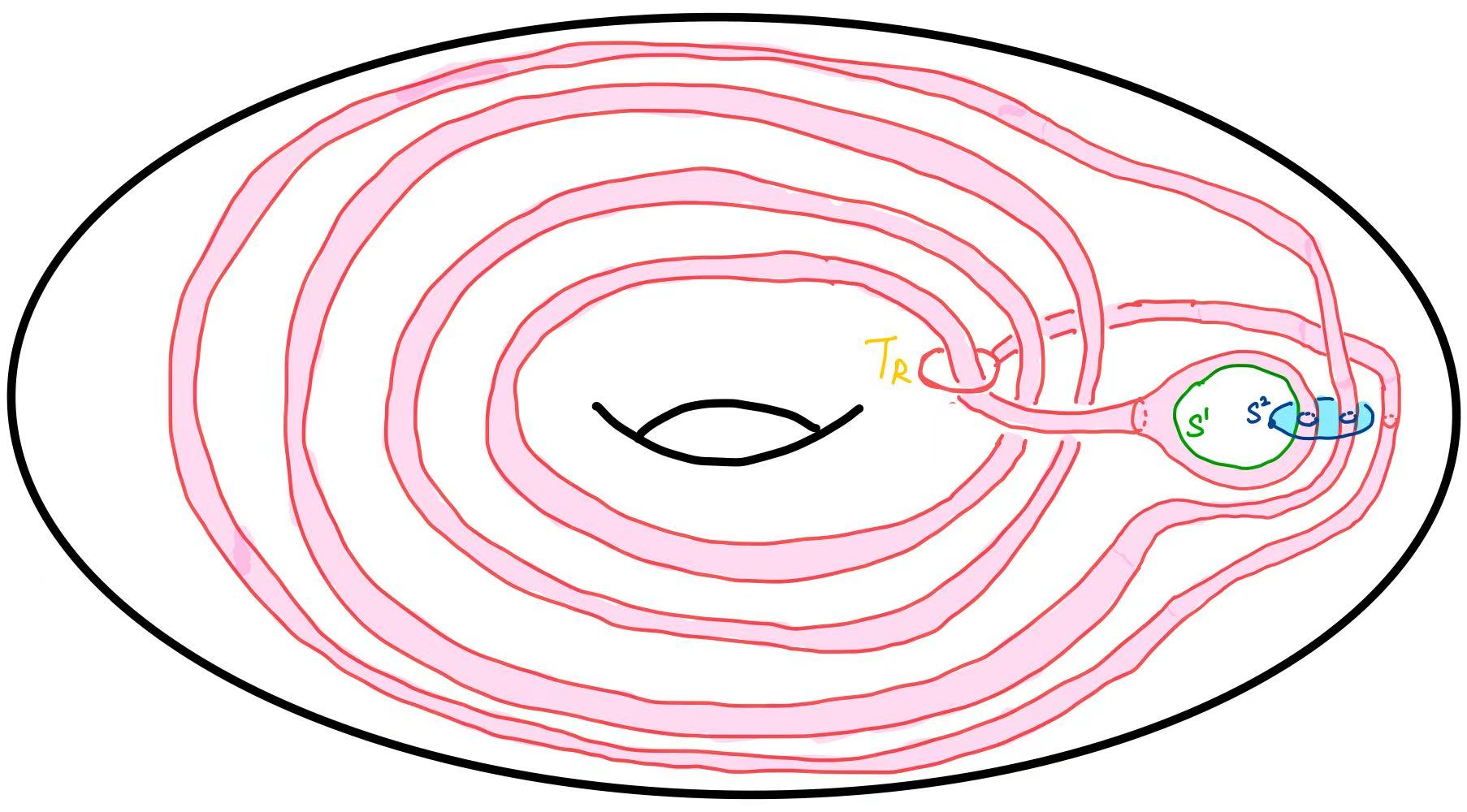}
        \caption{Dotted version of the loop of (1,2)-handle pair. The dark blue is the dotted $S^2=\partial \beta_0^\bullet$, the light blue region is $\beta_t^\bullet=\beta_0^\bullet: D^3\hookrightarrow S^1\times D^3$ which represents the 1-handle.}
        \label{fig:dotted-version-of-12-pair}
    \end{figure}

    Now following the constructions we made in \Cref{lem:one-parameter-version-of-dotted-and-0-framed-replacement}, we need to find $\gamma_t^\bullet, t\in [0,1]$. Let $\gamma_0^\bullet$ just be the standard normal disk of $\partial \beta_0^\bullet$, then $\gamma_t^\bullet$ is just dragging $\gamma_0^\bullet$ onto $T_R$ along the trajectory of $S^1$ in $T_R$, finally $\gamma_1^\bullet=\gamma_0^\bullet \#_{tube} R$ (see \Cref{fig:movement-of-disk}).

    \begin{figure}[!ht]
        \centering
        \includegraphics[width=0.45\textwidth]{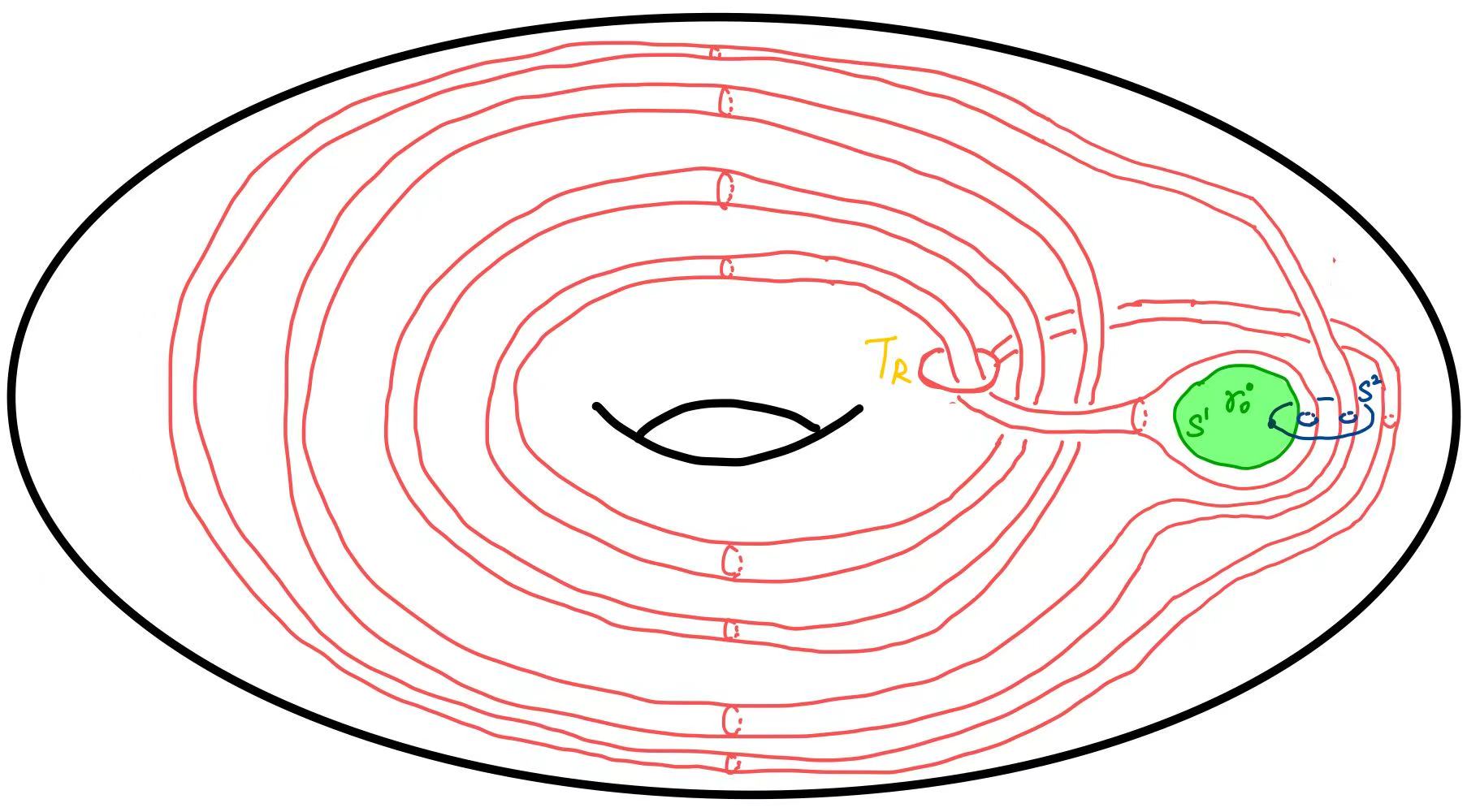}
        \includegraphics[width=0.45\textwidth]{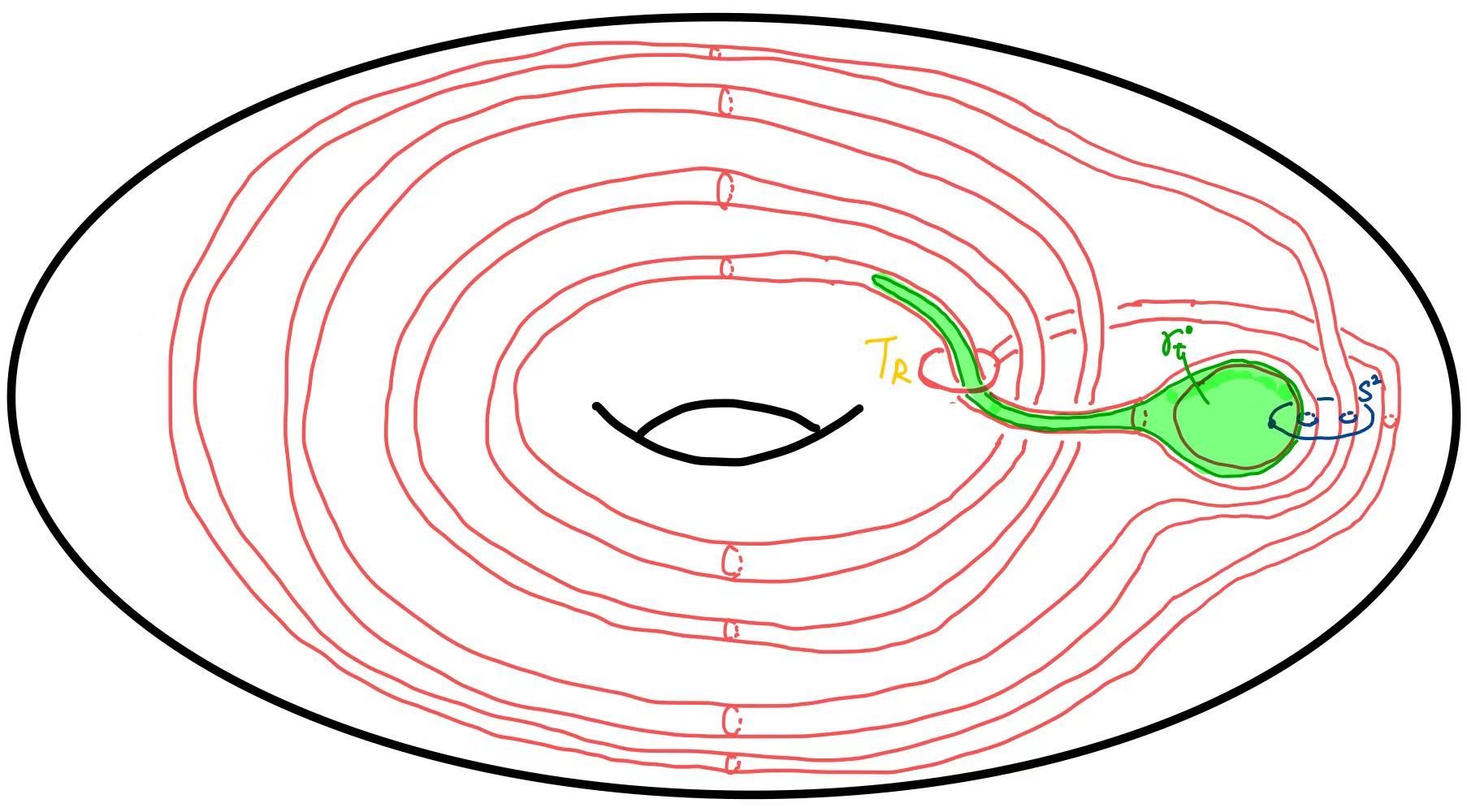}
        \includegraphics[width=0.5\textwidth]{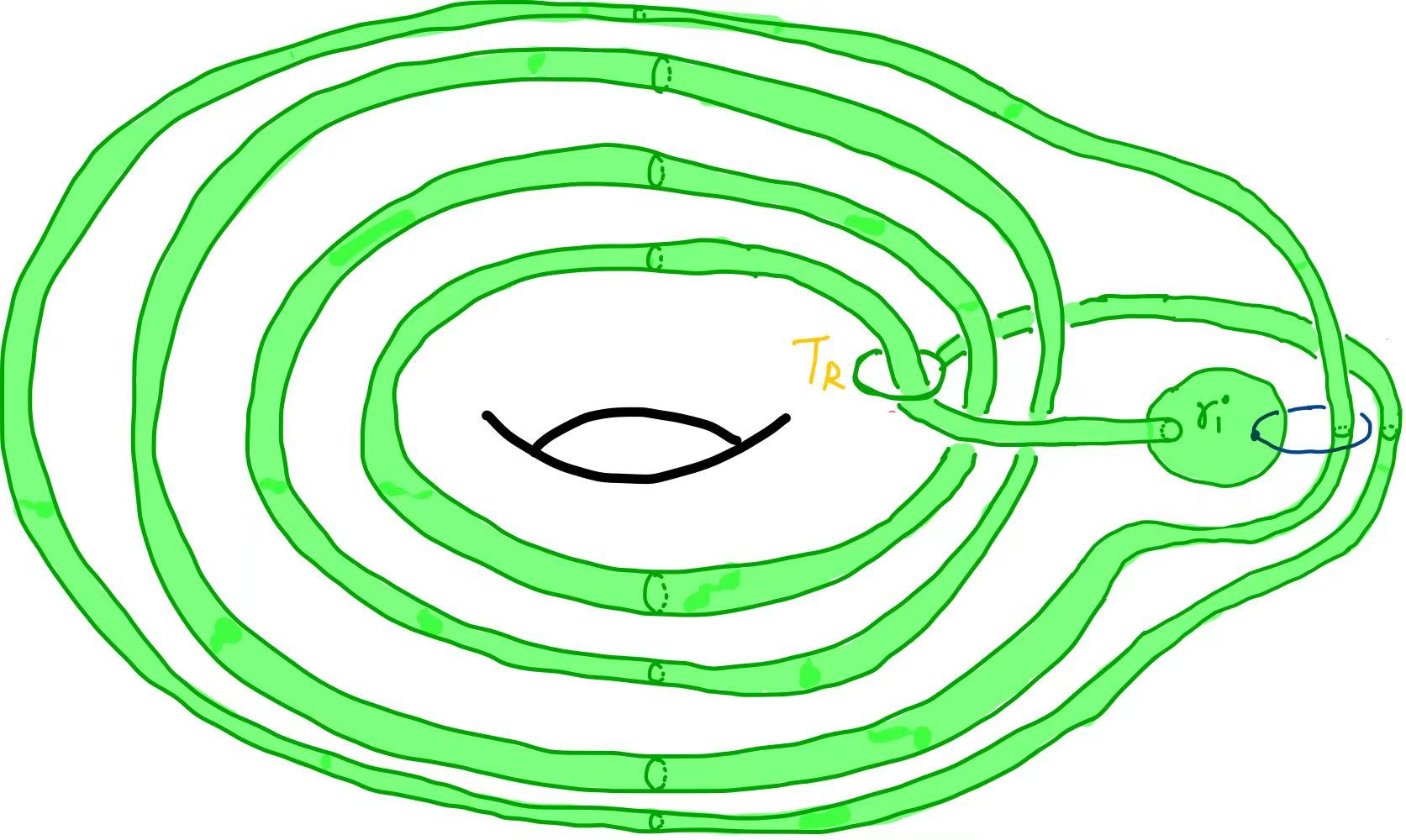}
        \caption{Movement of $\gamma_t^\bullet: D^2\hookrightarrow S^1\times D^3$. The figures illustrate $\gamma_0^\bullet, \gamma_t^\bullet, \gamma_1^\bullet$.}
        \label{fig:movement-of-disk}
    \end{figure}

    Now at $t\in[1,2]$, as the constructions in \Cref{lem:one-parameter-version-of-dotted-and-0-framed-replacement} read, consider $\phi_t: S^1\times D^3\to S^1\times D^3$ such that $\gamma_t^\bullet=\phi_t(\gamma_1^\bullet)=\gamma_{2-t}^\bullet$, $\beta_t^\bullet=\phi_t(\beta_1^\bullet=\beta_0^\bullet)$. In the interval $t\in [1,2]$, $\beta_t=\beta_0$ intersects $\gamma_t^\bullet$ at a single point. While $\beta_2^\bullet$ intersects $\gamma_2^\bullet$ at $S^1\cup I^1$, so when $t\in [2,3]$, $\beta_t$ shrinks in $\beta_2^\bullet$ to a standard normal sphere of $\gamma_2=\gamma_0$ so that $\beta_3=\beta_0$. Therefore, when during $t\in [2,3]$, $\beta_t$ will intersect $\gamma_t^\bullet=\gamma_0^\bullet$ first at 1 point, then at 3 points, and finally at 1 point, which will become essential to our calculations of Hatcher-Wagoner invariants. So in order to understand how $\beta_t,t\in [2,3]$ shrinks, we only need to understand $\beta_2^\bullet$:

    $\beta_2^\bullet$ is the isotopy image of $\beta_1^\bullet=\beta_0^\bullet$ when pulling $\gamma_1$ back to standard $\gamma_0$ along $R$, but this is just the \emph{backward barbell diffeomorphism} with data $\beta=(R_0, S, \gamma)$. Furthermore, the $\beta_1^\bullet$ is just a mid-ball of this implanted barbell (see \Cref{fig:barbell-equivalent-to-pulling-back})! Thus in \cite{Budney_2025} we know the embedded surgery description of the mid-ball after performing a barbell. In \Cref{fig:mid-ball-transformation} we draw the embedded surgery explicitly to get $\beta_2^\bullet$.

    Then performing \emph{one-parameter version of dotted and 0-framed replacement} we regard $\gamma_t^\bullet,t\in [0,3]$ as the dotted version of $h_{2,t},t\in [0,3]$ and regard $\beta_t,t\in [0,3]$ as the attaching spheres of $h_{3,t}, t\in[0,3]$. Thus we change the Cerf diagram of a (1,2)-handle pair to the Cerf diagram of a (2,3)-handle pair, both resulting in barbell $\delta_k$.

    \begin{figure}[!ht]
        \centering
        \includegraphics[width=0.5\textwidth]{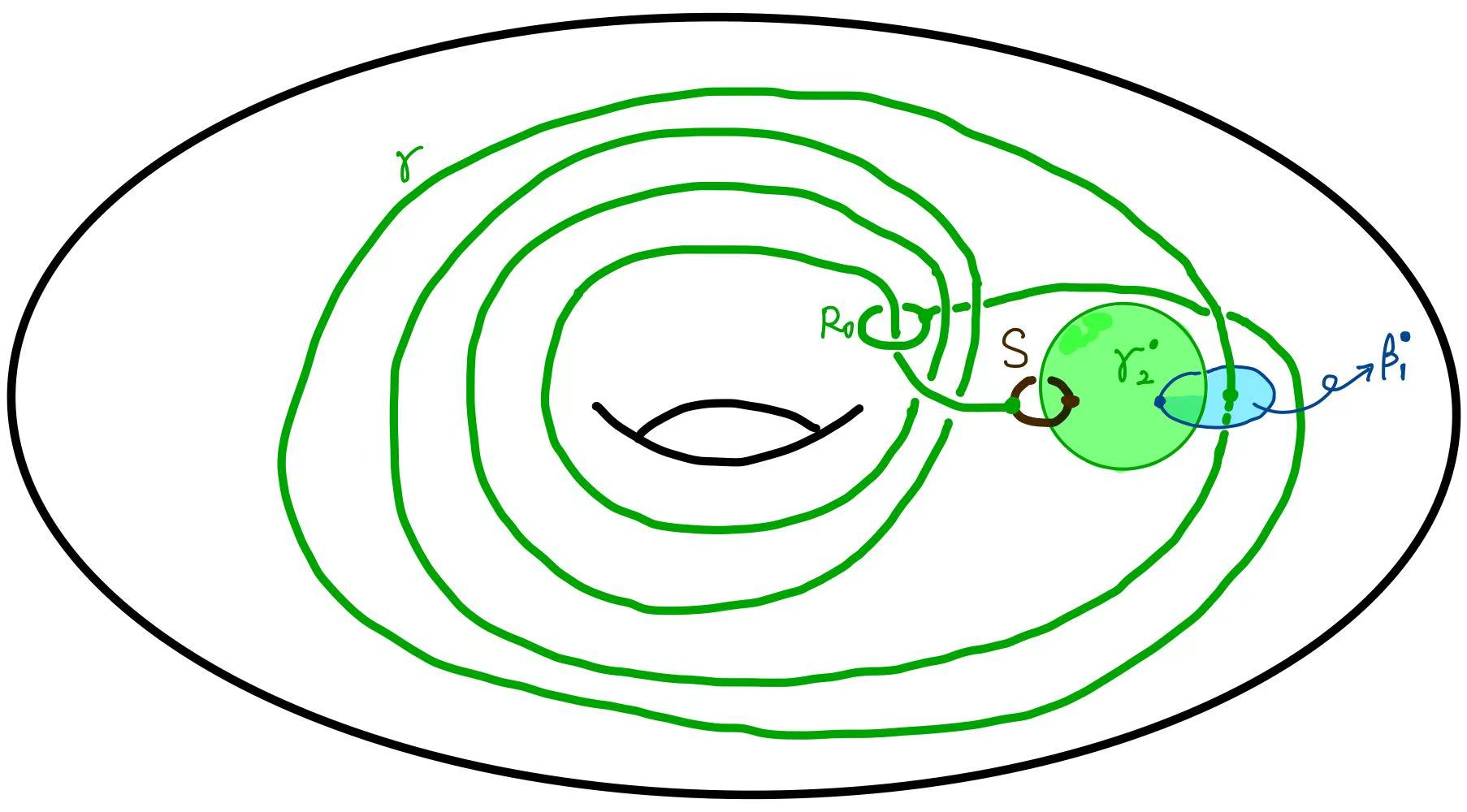}
        \caption{Pulling $\gamma_1$ back to $\gamma_0$(therefore pulling $\gamma_1^\bullet$ back to $\gamma_0^\bullet$) is equivalent to doing the barbell diffeomorphism $\beta=(R_0, S, \gamma)$.}
        \label{fig:barbell-equivalent-to-pulling-back}
    \end{figure}

    \begin{figure}[!ht]
        \centering
        \includegraphics[width=0.5\textwidth]{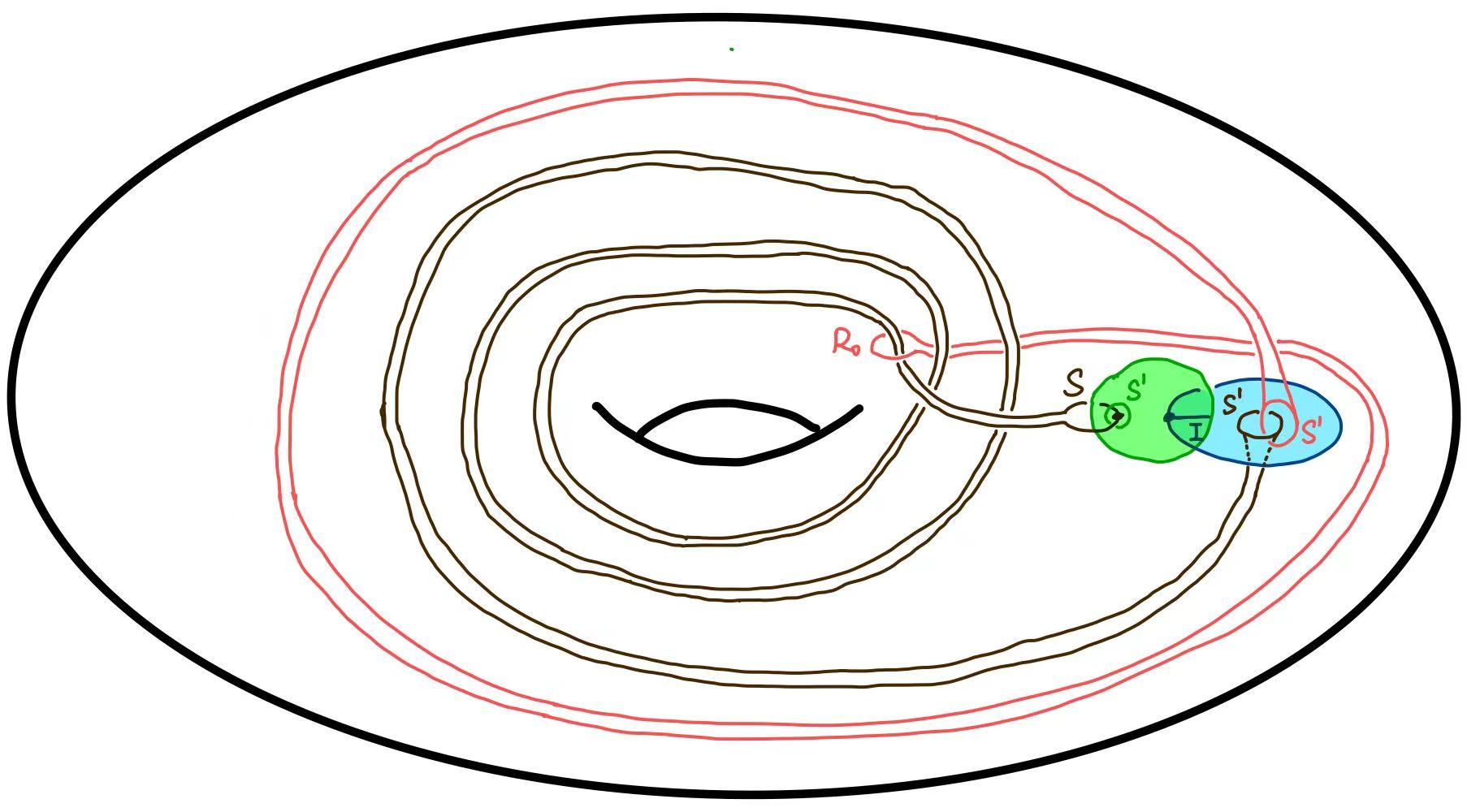}
        \caption{The mid-ball $\beta_1^\bullet$ becomes $\beta_2^\bullet=\partial(\beta_1^\bullet \cup h_{2,b} \cup h_{2, r})$ where $h_{2,b}$ is the 2-handle with attaching $S_b^1$ the brown one and core $D^2_b=\{S \text{ tube along the brown arc\}}$, $h_{2,r}$ is the 2-handle with attaching $S_r^1$ the red one and core $D^2_r=\{R \text{ tube along the red arc\}}$. As a result, $\beta_2^\bullet= (\beta_1^\bullet\setminus (S_b^1\times D^2\cup S_r^1\times D^2))\cup (D_b^2\times S^1\cup D_r^2\times S^1) $. Therefore, $\beta_2^\bullet\cap \gamma_2^\bullet=\text{the green }S^1 \text{ and the blue }I$ in the picture.}
        \label{fig:mid-ball-transformation}
    \end{figure}    
\end{example}
\section{Computing second Hatcher-Wagoner invariants for \texorpdfstring{$\beta_{2,n-2},n\geq 4$}{beta2,n-2,n>=4}}
\label{sec:computing-second-hatcher-wagoner-invariants-for-beta-2-n-2}

In this section, first we recall the definition for the second Hatcher-Wagoner invariant, namely, $\Theta: \ker\Sigma\to \Wh_1(\pi_1M, \mathbb{Z}_2\times \pi_2M)/\chi(K_3\mathbb{Z}[\pi_1 M])$. For simplicity, we restrict to the case when the Cerf diagram contains only a single eye of $(k,k+1)$-handle pair, $2\leq k\leq n-2$. For general definitions and well-definedness, see \cite{AST_1973__6__1_0} and \cite{singh2022pseudoisotopiesdiffeomorphisms4manifolds}.

Recall that $\Wh_1(\pi_1M, \mathbb{Z}_2\times \pi_2 M)=(\mathbb{Z}_2\times \pi_2 M)[\pi_1 M]/(\beta\cdot [1], \alpha\cdot [\sigma]-\alpha^\tau\cdot [\tau\sigma\tau^{-1}], \alpha,\beta\in \mathbb{Z}_2\times \pi_2 M, \tau,\sigma\in \pi_1 M)$ (see \cite{singh2022pseudoisotopiesdiffeomorphisms4manifolds} for details). 

To define $\Theta$, we need the following data: For $\{f_t,t\in I\}\in \pi_1(\mathcal{F},\mathcal{E})$ with Cerf diagram only a single eye of $(k,k+1)$-handle pair, suppose that the eye appears just before $t=\epsilon$ at the critical value $\frac{1}{2}$, and ends right after $t=1-\epsilon$ at the same critical value, then consider $V:=\bigcup_{t\in [\epsilon, 1-\epsilon]} f_t^{-1}(1/2)=M'\times[\epsilon, 1-\epsilon]$ where $M'=M\# S^k\times S^{n-k}$. For each $f_t, t\in [\epsilon,1-\epsilon]$, we call the belt $(n-k)$-sphere of the $k$-handle $B_t$, the attaching $k$-sphere of the $(k+1)$-handle $A_t$. Let $A:=\bigcup_{t\in[\epsilon, 1-\epsilon]} A_t\cong S^k\times I\subset V$, $B:=\bigcup_{t\in[\epsilon, 1-\epsilon]} B_t\cong S^{n-k}\times I\subset V$. Without loss of generality, we may assume that $A$ intersects $B$ transversely in $V$. Then we know that $A\cap B=I\sqcup_{i=1,...,k} S^1$ and denote the $i$-th $S^1$ as $S_i^1$. Now, choose a base point $*\in X\times 0\subset X\times I$, a fixed $*'\in I\subset A\cap B$ and a path $\delta_0\subset X\times I:$ $*\to *'$. For each $i$, choose a point $p_i\in S_i^1$, and two paths: $\delta^A_i\subset A$: $*'\to p_i$,
$\delta^B_i\subset B: *'\to p_i$. \emph{Denote} $\gamma_i:=\delta^B_i*(\delta^A_i)^{-1}\in \pi_1(X\times I, *')$ and $\gamma_i^*:=\gamma_i^{\delta_0}\in \pi_1(X\times I,*)$ by pulling $\gamma_i$ from the base point $*'$ back to the base point $*$ along $\delta_0$. Also note that both $A$ and $B$ are simply connected, then choose $D_i^A\looparrowright A$ such that $\partial D^A_i=S^1_i$, $D_i^B\looparrowright B$ such that $\partial D^B_i=S_i^1$, so \emph{denote} $\beta_i:=[D^A_i\cup D^B_i]\in \pi_2(X\times I, p_i)$  and $\beta_i^*:=\beta_i^{\delta_0*\delta_i^B}\in \pi_2(X\times I,*)$ by pulling $\beta_i$ from base point $p_1$ back to base point $*$ along $\delta_0*\delta_i^B$. 

To define the $\mathbb{Z}_2$ component, we consider two framings of the same $\mathbb{R}^{n-k}$-vector bundle on $S_i^1$. Consider $\nu (S_i^1, B)$(the normal bundle of $S_i^1$ in $B$), since $A$ intersects $B$ transversely in $V$, then $E=\nu (S_i^1, B)=\nu (A,V)|_{S_i^1}$ is a $(n-k)$-dimensional trivial vector bundle over $S_i^1$. We define two framings on $E$: $e_{i,B}$ is the framing naturally induced by $D^B_i\looparrowright B$, $e_{i,A}$ is induced by the attaching data for the 3-handle, namely, $A_t\times D^{n-k}\hookrightarrow M'=M'_t$. Then $e_{i,A}-e_{i,B}\in \pi_1(SO_{n-k})=\mathbb{Z}\text{ or }\mathbb{Z}_2$, we define $s_i:=e_{i,A}-e_{i,B} \text{ mod }2$. Thus for every $i$, we have defined $s_i\in \mathbb{Z}_2$.

Using all those notations above, we can define the second Hatcher-Wagoner invariant $\Theta$ for $F:=\{f_t,t\in I\}\in \pi_1(\mathcal{F},\mathcal{E})$:
\begin{definition}
    For $F:=\{f_t,t\in I\}\in \pi_1(\mathcal{F},\mathcal{E})$ with Cerf diagram only a single eye of $(k,k+1)$-handle pair with $2\leq k\leq n-2$, define $\Theta(F):=\sum_{i=1,...,k} (s_i, \beta_i^*)\cdot [\gamma_i^*]$. 
\end{definition}

\begin{example} \label{eg:calculating-second-hatcher-wagoner-invariant-for-delta-k}
    Now we calculate the second Hatcher-Wagoner invariant for implanted barbell diffeomorphism $\delta_k$: 
    
    Using results in last section, by projecting the movements to a single $S^1\times D^3\# S^1\times S^3$, and then using the dotted version and \emph{one-parameter version of dotted and 0-framed replacement} in $S^1\times D^3$, we use $\beta_t$ and $\gamma_t^\bullet$ in $S^1\times D^3$ to represent $A_t, B_t$ in $S^1\times D^3\# S^1\times S^3$. We see that the attaching sphere $A_t=\beta_t$ intersects $B_t=\gamma_t^\bullet\cup D^2$ at a single point for $t\in[0,2]$ and $\bigcup _{t\in[2,3]} A_t\cap B_t=\beta_2^\bullet \cap \gamma_2^\bullet=I\sqcup S^1$ with $I$ the blue arc in \Cref{fig:mid-ball-transformation}, and $S^1$ the green one in \Cref{fig:mid-ball-transformation}. Thus from \Cref{fig:mid-ball-transformation} we see $\gamma^*=\gamma_1^*=(k-1)^*=k-1\in \pi_1(S^1\times D^3, *)=\mathbb{Z}$. 

    To calculate $\beta^*$, we need to find $D^B\looparrowright \gamma_2^\bullet \times I $ and $D^A\looparrowright \beta_2^\bullet $ which bound $\partial D^A=\partial D^B=S^1$. By projecting to a single $S^1\times D^3$ for $t\in[2,3]$, we know that $\gamma_t^\bullet=\gamma_2^\bullet$ is the standard green one in \Cref{fig:mid-ball-transformation} which won't change, so it's enough to find $D^B\looparrowright \gamma_2^\bullet $. For $D^B$, we have an immediate choice in \Cref{fig:mid-ball-transformation}, which is just the small disk in green $\gamma_2^\bullet$ which bounds the green $S^1$. For $D^A$, we pull the green $S^1$, which is the meridian of the brown tube, along the black tube back to the meridian of brown $S^1\subset $ the blue $D^3$, which is just a parallel of red $S^1\subset$ the blue $D^3$, then capping with the red $D^2$ which is the red long tube connected with $R_0$. In all, $\beta^*=\beta_1^*=(R_0^\gamma)^*\in \pi_2(S^1\times D^3, *)=0$, here $\gamma$ is the arc in the definition of barbell, which is the arc connecting $S$ with $R_0$.

    Now we calculate $s=s_1$. For $s_1^A=\text{attaching framing of }\nu (A, V)|_{t\in [2,3], S^1\subset A}$, from the dotted version we know that $\{A_t|t\in [2,3]\}\subset \beta_2^\bullet$ is 0-framed, thus $s_1^A$ has a section which is the normal bundle of $\beta_2^\bullet$ in $S^1\times D^3$, which is exactly a normal section of $\nu (S^1, \gamma_2^\bullet)\subset \nu (S^1, \gamma_2^\bullet \times I=B|_{t\in [2,3]})$ when restricted to the green $S^1$. Thus $s^A=s^B$, $s=0$.

    Thus we obtain that $\Theta(\delta_k)=(0, (R_0^\gamma)^*)[k-1]=(0,0)[k-1]=0\in \Wh_1(\mathbb{Z}, \mathbb{Z}_2\times 0)$.

\end{example}

The calculation can be generalized similarly to any half-unknotted implanted barbell $\beta=\beta_{i,n-i}=(R_0, S, \gamma)$ with $S$ unknotted, in this section we restrict ourselves to the case of $\beta_{2,n-2}$ for $n\geq 4$, i.e. let $i=2$ in the remainder of this section. 

After doing an isotopy of $M$ we can put $S=\partial \beta_0^\bullet$ into the standard position and use strategies in \cref{sec:changing-from-i-1-i-handle-pair-to-n-i-n-i-1-handle-pair} to find a Cerf diagram containing a single eye of $(n-2,n-1)$-handle pair with $\gamma_t^\bullet, \beta_t, t\in [0,3]$. Note that for $t\in [0,2]$, $\beta_t$ intersects $\gamma_t^\bullet$ at a single point, so as the example suggests, the second Hatcher-Wagoner invariant can be calculated just using $\beta_2^\bullet$ and $\gamma_2^\bullet=\gamma_0^\bullet$. Assume we have obtained $\beta_2^\bullet\subset M$, $\beta_2^\bullet \cap \gamma_2^\bullet=I\sqcup _{i=1,...,k} S^1$, then the same argument of the example works in the general case, i.e. using notations above, we have
$$\Theta(F)=\sum_{i=1,...,k}(s_i, \beta_i^*)\cdot [\gamma_i^*]$$
where for each $i$, $\gamma_i=\delta^B_i*(\delta^A_i)^{-1}$ with $\delta^A_i\looparrowright \beta_2^\bullet$, $\delta^B_i\looparrowright \gamma_2^\bullet$ connecting $*'$ and $p_i\in S^1_i$, $\beta_i=D^A_i\cup D^B_i$ with $D_i^A\looparrowright \beta_2^\bullet$ and $D^B_i\looparrowright \gamma_2^\bullet$, $s_i=e_{i,A}-e_{i,B} \text{ mod }2$ with $E=\nu (S_i^1, \gamma_2^\bullet \times I)=\nu (\beta_2^\bullet , M\times I)|_{S_i^1}$. $\nu (S_i^1, \gamma_2^\bullet \times I)=\nu (S^1_i, \gamma_2^\bullet)\oplus E'=\nu (S_i^1, D_i^B)\oplus E'$, $\nu (\beta_2^\bullet, M\times I)|_{S_i^1}=\nu (\beta_2^\bullet, M)\oplus E''$ where $E',E''$ are two $1$-dimensional trivial bundle over $S_i^1$. $e_{i,B}$ is the framing induced by $\nu (S_i^1, \gamma_2^\bullet)$ and $e_{i,A}$ is the framing induced by $\nu (\beta_2^\bullet,M)|_{S_i^1}$. Since $\beta_2^\bullet$ intersects $\gamma_2^\bullet$ transversely at $S_i^1$ in $M$ and $codim_{\gamma_2^\bullet} S_i^1=codim_{\beta_2^\bullet} M=1$, we have $\nu (S_i^1, \gamma_2^\bullet)=\nu (\beta_2^\bullet,M)|_{S_i^1}$; thus $s_i=e_{i,A}-e_{i,B}=0$.

To calculate $\beta_i^*$ and $\gamma_i^*$, we need a smarter change: It can be very hard to visualize $\beta_2^\bullet$, especially when $R_0$ goes through $\beta_0^\bullet$ in a strange way ($R_0\cap \beta_0^\bullet$ may be knotted in $D^{n-1}=\beta_0^\bullet$). But we know that there is an isotopy $\phi_2: M\to M$ which is isotopic to identity such that $\phi_2(\beta_1^\bullet=\beta_0^\bullet)=\beta_2^\bullet$, $\phi_2(\gamma_1^\bullet)=\gamma_2^\bullet$. This means we can pullback every data we need to $\beta_1^\bullet$ and $\gamma_1^\bullet$. Then it will be easier to visualize in the given implanted barbell $\beta$. 

Note that $\beta_1^\bullet=\beta_0^\bullet$ is the given unknotted ball which bounds $S$, i.e. $S=\partial\beta_1^\bullet$. Also recall that $\gamma_1^\bullet=\gamma_0^\bullet \text{ tube}_\gamma \text{ }R_0$. Without loss of generality, we may assume that $\beta_0^\bullet \cap R_0=\sqcup_{i=1,...,k} S_i^1$ and $\text{int}(\beta_0^\bullet)\cap \gamma=\emptyset  $ (by finger-pushing $R_0$ along $\gamma$ without reaching $S$, such that the arc $\gamma$ of the implanted barbell $\beta$ can be made arbitrarily short). Let $*_0=S\cap \gamma$ be the base point, note that $*_0$ will not move during the whole time $t\in [0,3]$ (in \Cref{fig:mid-ball-transformation}, $*_0$ is the left vertex of the blue $I$). For each $i$, find a path $\delta^B_i\subset \beta=(R_0, S,\gamma)$ which is a path from $*_0$ to $p_i\in S_i^1$. Also, for each $i$, $S_i^1$ divides $R_0$ into two embedded disks $D_i$ and $D_i'$, where $D_i'$ is the one connected to the arc $\gamma$. Let $D^B_i=D_i$. Now comes the following main theorem:

\begin{theorem} \label{thm:general-calculation-of-second-hatcher-wagoner-invariant-for-beta-2-n-2}
    For a half-unknotted implanted barbell $\beta=\beta_{2,n-2}=(R_0, S, \gamma)$ with $S=\partial \beta_0^\bullet$ where $\beta_0^\bullet: D^{n-1}\hookrightarrow M$, by finger-pushing $R_0$ along the arc, we can make $\gamma$ short enough such that $\text{int}(\beta_0^\bullet)\cap \gamma =\emptyset$. Now suppose that $\beta_0^\bullet \cap R_0=\bigsqcup _{i=1}^k S_i^1$. Choose $p_i\in S_i^1$ and let $*_0=\gamma\cap S$ be the base point. For each $i$, find a path $\delta^B_i\subset \beta=(R_0, S,\gamma)$ which is a path from $*_0$ to $p_i\in S_i^1$. Also, for each $i$, $S_i^1$ divides $R_0$ into two embedded disks $D_i$ and $D_i'$, where $D_i'$ is the one connected to the arc $\gamma$. Let $D^B_i=D_i$ (see \Cref{fig:general-barbell} for an illustration). Then the $f_\beta\in \pi_0\mathcal{P}$ we constructed in \Cref{cor:changing-from-i-1-i-handle-pair-to-n-i-n-i-1-handle-pair} which results in the implanted barbell diffeomorphism with respect to $\beta$ satisfies:
    
    $$\Theta(f_\beta)=\sum_{i=1,...,k} (0, [D^B_i]^{\delta_i^B})\cdot  [\delta_i^B]$$
    Here we identify $\pi_i(M,*_0)$ with $\pi_i(M, \beta_0^\bullet)$ so that $[D_i]\in \pi_2(M,\beta_0^\bullet)$ and $\delta_i^B\in \pi_1(M,\beta_0^\bullet)$. 
\end{theorem}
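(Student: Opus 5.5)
The approach is to compute $\Theta(f_\beta)$ from the data $\beta_2^\bullet\cap\gamma_2^\bullet$ described just before the statement. Then pull everything back by the ambient isotopy $\phi_2$ (isotopic to the identity), with $\phi_2(\beta_1^\bullet)=\beta_2^\bullet$ and $\phi_2(\gamma_1^\bullet)=\gamma_2^\bullet$. After pulling back, every quantity can be read off from $\beta_1^\bullet=\beta_0^\bullet$ and $\gamma_1^\bullet=\gamma_0^\bullet\,\text{tube}_\gamma\,R_0$.

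First, recall from the construction in \Cref{lem:one-parameter-version-of-dotted-and-0-framed-replacement} that on $t\in[0,2]$ the spheres $A_t=\beta_t$ and $B_t$ meet in a single point. Hence all circle components of $A\cap B$ come from $t\in[2,3]$, where $\beta_t$ shrinks inside $\beta_2^\bullet$ to a standard normal sphere. So $A\cap B$ restricted to $[2,3]$ is identified with $\beta_2^\bullet\cap\gamma_2^\bullet$. Applying $\phi_2^{-1}$, this becomes $\beta_0^\bullet\cap\gamma_1^\bullet$. Since $\gamma_0^\bullet$ is a standard normal disk meeting $\beta_0^\bullet$ in an arc, and $\text{int}(\beta_0^\bullet)\cap\gamma=\emptyset$, this intersection is the arc $I$ (through $\gamma_0^\bullet$ and the tube, starting at $*_0$) together with the circles $R_0\cap\beta_0^\bullet=\bigsqcup S_i^1$. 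The base point $*'$ is taken on the arc at $*_0$, which stays fixed throughout, so we may take $\delta_0$ constant.

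Next, identify the ingredients of the definition of $\Theta$ for each $S_i^1$.
\begin{enumerate}
\item The $\mathbb{Z}_2$-part $s_i$ vanishes by the codimension-one argument already given before the statement. Indeed, $\nu(S_i^1,\gamma_2^\bullet)=\nu(\beta_2^\bullet,M)|_{S_i^1}$, and the extra trivial line bundles coming from the $I$-direction agree.
\item For $\delta_i^A$, use that $\beta_0^\bullet$ is a ball, so any path in it from $*_0$ to $p_i$ is canonical up to homotopy. Choose $\delta_i^B$ inside $\gamma_1^\bullet$: it travels from $*_0$ through the tube (i.e.\ along $\gamma$) onto $R_0$ and to $p_i$, which is exactly a path in $\beta=(R_0,S,\gamma)$. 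Then $\gamma_i=\delta_i^B*(\delta_i^A)^{-1}$ is the class of $\delta_i^B$ in $\pi_1(M,\beta_0^\bullet)$.
\item For the $\pi_2$-part, $D_i^A\subset\beta_0^\bullet$ is a disk in a ball, hence contributes nothing once we identify $\pi_2(M,*_0)$ with $\pi_2(M,\beta_0^\bullet)$. For $D_i^B\subset\gamma_1^\bullet$ we may take the subdisk of $R_0$ bounded by $S_i^1$ not containing the tube attachment, namely $D_i$. So $\beta_i^*=[D_i]^{\delta_i^B}$.
\end{enumerate}
Summing over $i$ gives the formula. We must also check that the $\phi_2$-pullback is legitimate: $\phi_2$ is isotopic to the identity, so it acts trivially on these homotopy classes, and $*_0$ and the paths transform consistently.

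The main obstacle I expect is the careful justification that the intersection data in $V=M'\times I$ restricted to $t\in[2,3]$ really reduces to the static intersection $\beta_2^\bullet\cap\gamma_2^\bullet$ in $M$, including the transversality of $A$ and $B$ in $V$. Here $B$ is $\gamma_2^\bullet$ times the interval (capped in $M'$), and $A$ is swept out by the shrinking $\beta_t\subset\beta_2^\bullet$. The nested spheres $\beta_t$ foliate $\beta_2^\bullet$, so $A$ restricted to $[2,3]$ is a reparametrization of $\beta_2^\bullet$ minus a small ball, and the intersection in $V$ projects diffeomorphically onto $\beta_2^\bullet\cap\gamma_2^\bullet$. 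A second, milder point is orientation and sign conventions for $\gamma_i$ versus its inverse. I would fix these by comparing with \Cref{eg:calculating-second-hatcher-wagoner-invariant-for-delta-k}, where the answer $[k-1]$ matches the formula.
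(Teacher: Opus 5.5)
Your proposal is correct and follows essentially the same route as the paper. In both, $s_i=0$ comes from the codimension-one identification $\nu(S_i^1,\gamma_2^\bullet)=\nu(\beta_2^\bullet,M)|_{S_i^1}$. Everything is then pulled back along $\phi_2$ to $\beta_1^\bullet=\beta_0^\bullet$ and $\gamma_1^\bullet=\gamma_0^\bullet\,\text{tube}_\gamma R_0$, where contractibility of the ball kills $\delta_i^A$ and $D_i^A$, and $D_i^B=D_i$ is the natural choice inside the disk $\gamma_1^\bullet$.

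On the basepoint issue you flag, the paper simply notes that $\phi_t$ fixes $*_0$ for all $t\in[1,2]$, so no transport along a track is needed. Your looser appeal to ``$\phi_2$ is isotopic to the identity'' also suffices, since any resulting simultaneous conjugation vanishes in $\Wh_1$.
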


\begin{proof}
    In the above discussion we already showed that $s_i=0\in \mathbb{Z}_2$ for all $i$, we only need to show that $[D_i]^{\delta_i^B}=\beta_i^*$ and $[\delta_i^B]=\gamma_i^*$. But since we choose $*_0\in M$ which won't move during the whole procedure, then $*=(*_0,0)\in M\times 0$, $*'=(*_0,1/2)\in (M')_t=M\# S^{n-2}\times S^2$. So for the calculation, we can choose $\beta_i^*$ and $\gamma_i^*$ based at $*_0\in M$. Then by definition and by changing data to $\gamma_1^\bullet=\gamma_0^\bullet \text{ tube}_\gamma \text{ }R_0$ and $\beta_1^\bullet=\beta_0^\bullet$, we pick $\delta_i^A\subset \beta_1^\bullet \text{ connecting }$ $*_0$ to $p_i$ and pick $D_i^A\looparrowright \beta_1^\bullet$ with $\partial D_i^A=S_i^1$. Then by definition $\beta_i^*=[\phi_2((D_i^A\cup D_i^B)^{\delta_i^B})]=[(D_i^A\cup D_i^B)^{\delta_i^B}]\in \pi_2(M, *_0)$, since by \Cref{lem:one-parameter-version-of-dotted-and-0-framed-replacement}, $ \phi_t: M\to M,t\in[1,2]$ is a one-parameter family of isotopies of $M$ with $\phi_1=\id$ and $\phi_t(*_0)=*_0,\forall t\in [1,2]$. But $\beta_0^\bullet$ is an embedded contractible $D^{n-1}$, so $\beta_i^*=[D_i^B]^{\delta_i^B}$. Also for the same reason, $\gamma_i^*=\delta_i^B*(\delta_i^A)^{-1}\in \pi_1(M, *_0)$ which is $\delta_i^B\in \pi_1(M, \beta_0^\bullet)$. 
\end{proof}

\begin{example}
    Using \Cref{fig:movement-of-disk} (where we draw $\gamma_1^\bullet$ and $\beta_1^\bullet=\beta_0^\bullet$ explicitly when $\beta=\delta_k$ in $S^1\times D^3$) and results in \Cref{thm:general-calculation-of-second-hatcher-wagoner-invariant-for-beta-2-n-2}, one can easily see that for the implanted barbell $\delta_k$ in $S^1\times D^3$, $\Theta(f_{\delta_k})=(0,0)\cdot [k-1]=0$, which coincides with \Cref{eg:calculating-second-hatcher-wagoner-invariant-for-delta-k}.
\end{example}

\section{Generalizations to immersed barbell diffeomorphisms}
\label{sec:generalizations-to-immersed-barbell-diffeomorphisms}

The main goal of this section is to generalize implanted $(2,2)$-barbells $\beta_{2,2}$ in $M^4$ to \emph{immersed $(2,2)$-barbells}, and then define the corresponding \emph{immersed barbell diffeomorphisms}. Following what we did in the above sections, we similarly construct $g_\beta\in \pi_0\mathcal{P}$ with its Cerf diagram being a single eye of (1,2)-handle pair and $f_\beta\in \pi_0\mathcal{P}$ with its Cerf diagram being a single eye of (2,3)-handle pair, both resulting in that immersed barbell diffeomorphism. After that we also compute the Hatcher-Wagoner invariants for $f_\beta$, thus also for $g_\beta$, by \Cref{rmk:orientation-reversing-cobordism-for-hatcher-wagoner-invariants}. By this small generalization and the theorem in the last section, we show that $\langle\Theta(f_\beta),\Theta(g_\beta),\beta \text{ half-unknotted immersed barbells}\rangle$ contains all elements in $\Wh_1(\pi_1 M; \mathbb{Z}_2\times \pi_2 M)/\chi(K_3[\mathbb{Z}\pi_1M])$ realizable by Singh's procedure in \cite{singh2022pseudoisotopiesdiffeomorphisms4manifolds}. Explicitly, 
$$\langle\Theta(f_\beta),\Theta(g_\beta) |\beta \text{ half-unknotted immersed barbells}\rangle\supset \Xi=\langle (s,\sigma)\cdot [\gamma]|s=0 \text{ or }w_2^M(\sigma)\neq 0\rangle$$ Also, we obtain parallel results for higher dimensional manifolds. 

\begin{definition}
    An \emph{immersed barbell} $\beta=(R_0,S,\gamma)$ consists of a framed immersed 2-sphere $R_0$ in $M$, a framed embedded 2-sphere $S$ in $M$, and a framed embedded arc $\gamma$ in $M$ connecting $R_0$ and $S$. By finger-pushing $R_0$ along $\gamma$ into $S$ to get another framed immersed 2-sphere $R$ in $M$ such that $R\cap S=2\text{ pts}$, we do surgery along $\nu S$ to get $M_{\nu S}$, then after surgery we get $T_R=(R\setminus D^2\times \partial I)\cup S^1\times I $ which is a framed immersed torus. Regarding $T_R$ as an $S^1$-parameterised immersions of $S^1$ in $M$, denoted as $\gamma_t,t\in S^1$, we slightly change the parametrization of $T_R$ such that all pieces $\gamma_t,t\in S^1$ are actually embeddings of $S^1$. As in \cref{sec:twin-twists-and-relation-with-barbell-diffeomorphisms}, $\nu T_R$ thus defines an element $[\nu T_R]\in \pi_1(\Emb(\nu S^1, M_{\nu S}), *)$. We define the corresponding \emph{immersed barbell diffeomorphism} as $\text{ps}_{\nu S}[\nu T_R]$, which was defined in \Cref{def:parameterised-surgery}. 
\end{definition}

In the same manner as \Cref{prop:twin-twist-is-pseudo-isotopic-to-identity-when-s-unknotted} we can show that, when $S$ is unknotted, there is a loop $g_\beta\in \pi_0\mathcal{P}$ of handle constructions of $M\times I$ containing a single (1,2)-handle pair which results in that immersed barbell diffeomorphism. And by doing \emph{one-parameter version of dotted and 0-framed replacement} stated in \Cref{lem:one-parameter-version-of-dotted-and-0-framed-replacement} we change the loop of (1,2)-handle pair $g_\beta$ to a loop of (2,3)-handle pair $f_\beta\in \pi_0\mathcal{P}$, thus we can compute the Hatcher-Wagoner invariants for $f_\beta$. 

Throughout the process, the only difference from the embedded case is that, to obtain $\gamma_t^\bullet,t\in [0,1]$ with $\gamma_t=\partial\gamma_t^\bullet$ which is the given loop of attaching spheres of the 2-handles, in the previous embedded case, we naturally choose $\gamma_t^\bullet=\gamma_0^\bullet\cup _{s\in [0,t]} \gamma_s$ (that is, pull $\gamma_0^\bullet$ along $\gamma_t,t\in [0,1]$), but now since $T_R$ is an immersed torus, it may have double self-intersection points, then there will be $t_0<t_1$ such that $p:=\gamma_{t_0}\cap \gamma_{t_1}\neq \emptyset$ (by a reparametrization of $T_R$, which won't change the resulting $g_\beta\in \pi_0\mathcal{P}$, one can assume that for any $t_0\neq t_1$, there's at most one intersection point between $\gamma_{t_0}$ and $\gamma_{t_1}$). In this case when the intersection point is reached for the second time at $t=t_1$, one must finger-push the small intersection disk $D^2=\{\text{normal disk of }T_R \text{ at point } p\}\subset \gamma_t^\bullet, t\in (t_0,t_1)$ towards the trajectory of $\gamma_t$ to get an embedded $\gamma_t^\bullet, t>t_1$. See \Cref{fig:finger-push-ensure-immersion} for an illustration.

\begin{figure}[!ht]
    \centering
    \includegraphics[width=0.5\textwidth]{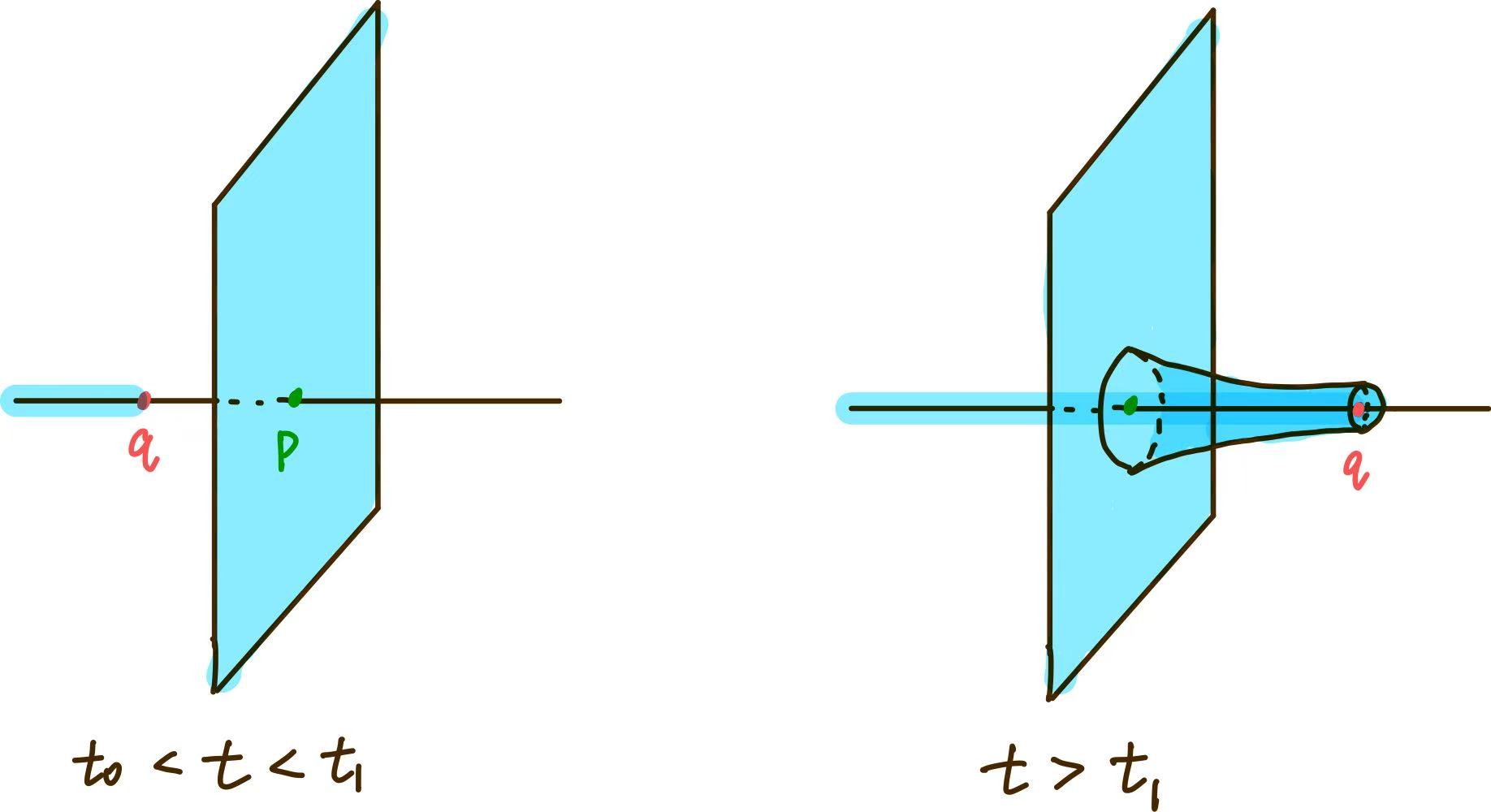}
    \caption{Here we draw a local region of $T_R$ near the self-intersection point in an $\mathbb{R}^3$-slice. $p=\gamma_{t_0}\cap \gamma_{t_1}$, $q\in \gamma_t$, and the blue region is the local part of $\gamma_t^\bullet$ in an $\mathbb{R}^3$-slice. When $t>t_1$, the blue disk $\subset \gamma_t^\bullet$ will be finger-pushed along $\gamma_t$ to ensure $\gamma_t^\bullet$ is embedded.}
    \label{fig:finger-push-ensure-immersion}
\end{figure}

After understanding that we can deduce the following theorem, which is a generalization of \Cref{thm:general-calculation-of-second-hatcher-wagoner-invariant-for-beta-2-n-2}:

\begin{theorem} \label{thm:general-calculation-of-second-hatcher-wagoner-invariant-for-immersed-beta-2-n-2}
    For a half-unknotted immersed barbell $\beta=(R_0,S,\gamma)$ with $\beta_0^\bullet: D^3\hookrightarrow M, S=\partial\beta_0^\bullet $, perturb self-intersections of $R_0$ away from $\beta_0^\bullet$. By finger-pushing $R_0$ along the arc, we can make $\gamma$ short enough such that $\text{int}(\beta_0^\bullet)\cap \gamma =\emptyset$. Now suppose that $\beta_0^\bullet \cap R_0=\sqcup _{i=1,...,k} S_i^1$. Choose $p_i\in S_i^1$ and let $*_0=\gamma\cap S$ be the base point. For each $i$, find a path $\delta^B_i\subset \beta=(R_0, S,\gamma)$ which is a path from $*_0$ to $p_i\in S_i^1$. Also, for each $i$, $S_i^1$ divides $R_0$ into two immersed disks $D_i$ and $D_i'$, where $D_i'$ is the one connected to the arc $\gamma$. Let $D^B_i=D_i$. Then the $f_\beta\in \pi_0\mathcal{P}$ we constructed which results in the immersed barbell diffeomorphism with respect to $\beta$ satisfies:
    
    $$\Theta(f_\beta)=\sum_{i=1,...,k} (0, [D^B_i]^{\delta_i^B})\cdot  [\delta_i^B]$$
    Here we identify $\pi_i(M,*_0)$ with $\pi_i(M, \beta_0^\bullet)$ so that $[D_i]\in \pi_2(M,\beta_0^\bullet)$ and $\delta_i^B\in \pi_1(M,\beta_0^\bullet)$.    
\end{theorem}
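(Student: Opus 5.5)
The plan is to run the proof of \Cref{thm:general-calculation-of-second-hatcher-wagoner-invariant-for-beta-2-n-2} again with $n=4$. The only change is that the family $\gamma_t^\bullet$, $t\in[0,1]$, is now built with the finger-pushes of \Cref{fig:finger-push-ensure-immersion}. We must then check that these finger-pushes do not affect any of the data entering $\Theta$.

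\textbf{Step 1: build the two pseudo-isotopies.} First, after a reparametrization of the immersed torus $T_R$, the loop $\gamma_t$ is a loop of embedded framed circles in $M\# S^1\times S^3$. The argument of \Cref{prop:twin-twist-is-pseudo-isotopic-to-identity-when-s-unknotted} then gives $g_\beta$, a loop of $(1,2)$-handle pairs.

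Passing to the dotted picture, $\beta_t^\bullet=\beta_0^\bullet$ is fixed and $\gamma_0^\bullet$ is a standard normal disk of $S$. We define $\gamma_t^\bullet$ by sweeping $\gamma_0^\bullet$ along the trajectory of $\gamma_t$. Each time a self-intersection point of $T_R$ is reached for the second time, we finger-push the small disk sheet along that trajectory, so that every $\gamma_t^\bullet$ stays embedded.

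We must check the hypotheses of \Cref{lem:one-parameter-version-of-dotted-and-0-framed-replacement} and \Cref{prop:one-parameter-version-of-dotted-and-0-framed-replacement}:
\begin{enumerate}[label=(\roman*)]
\item $\gamma_t\cap\beta_t=\emptyset$ for all $t$;
\item $\gamma_1^\bullet$ and $\gamma_0^\bullet$ induce the same framing on $\gamma_1=\gamma_0$.
\end{enumerate}
For (i), the self-intersections of $R_0$ were perturbed away from $\beta_0^\bullet$, so the finger-push region is disjoint from $S$. For (ii), the finger-pushes are supported away from the boundary circle, so the framing is unchanged. Granting both, the proposition produces $f_\beta$, a single eye of $(2,3)$-handle pairs, so $\Sigma(f_\beta)=0$.

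\textbf{Step 2: the $\mathbb{Z}_2$-components vanish.} As in \cref{sec:computing-second-hatcher-wagoner-invariants-for-beta-2-n-2}, $\Theta(f_\beta)$ is read off from the intersection $\beta_2^\bullet\cap\gamma_2^\bullet=I\sqcup\bigsqcup S_i^1$ over $t\in[2,3]$. For each $i$, the codimension identification
$$\nu(S_i^1,\gamma_2^\bullet)=\nu(\beta_2^\bullet,M)|_{S_i^1}$$
holds verbatim, because $\beta_2^\bullet$ and $\gamma_2^\bullet$ are embedded and meet transversely. Hence $s_i=0$ for every $i$.

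\textbf{Step 3: the $\pi_1$ and $\pi_2$ components.} We pull everything back by $\phi_2^{-1}$, an isotopy fixing $*_0$, to the pair $(\beta_1^\bullet,\gamma_1^\bullet)$. Here $\beta_1^\bullet=\beta_0^\bullet$ is a contractible embedded ball, so the $\delta_i^A$ and $D_i^A$ contribute trivially. Therefore
$$\gamma_i^*=[\delta_i^B],\qquad \beta_i^*=[D_i^B]^{\delta_i^B}.$$

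\textbf{Main obstacle.} The difficulty is identifying $\gamma_1^\bullet\cap\beta_0^\bullet$ and the relevant disks in $\gamma_1^\bullet$. Now $\gamma_1^\bullet$ is $\gamma_0^\bullet$ tubed along $\gamma$ to a modified copy $\widetilde R_0$ of $R_0$, in which each self-intersection is resolved by a finger tube running along the sweep. The approach is in three parts.
\begin{enumerate}[label=(\alph*)]
\item The finger tubes lie in a neighbourhood of $R_0$ away from $\beta_0^\bullet$. Hence $\gamma_1^\bullet\cap\beta_0^\bullet$ is still exactly the circles $S_i^1$, and the paths $\delta_i^B$ can be chosen to avoid the tubes.
\item The embedded disk $\widetilde D_i\subset\gamma_1^\bullet$ bounded by $S_i^1$ differs from the immersed disk $D_i$ only by replacing neighbourhoods of double points with tubes.
\item This replacement is a homotopy in $M$: shrink each tube back to the double point, i.e. undo the finger move. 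So $[\widetilde D_i]=[D_i]\in\pi_2(M,\beta_0^\bullet)$.
\end{enumerate}
A subtle point is a double point of $R_0$ with one sheet in $D_i$ and the other in $D_i'$. There the tube may travel through $D_i$ or $D_i'$ depending on which sheet is swept second. Since the whole sweep happens in the complement of $\beta_0^\bullet$, the homotopy class relative to $\beta_0^\bullet$ is still unchanged. Substituting into the definition of $\Theta$ then gives the formula.
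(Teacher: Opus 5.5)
Your overall route is the paper's: rerun the embedded computation, build $\gamma_t^\bullet$ with finger-pushes, pull back by $\phi_2$ to $(\beta_0^\bullet,\gamma_1^\bullet)$, and undo fingers to compare $\pi_2$-classes. However, step (a) is false. With it falls your claim in Steps 2--3 that $\gamma_1^\bullet\cap\beta_0^\bullet$ consists only of $I$ and the $S_i^1$.

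\textbf{Why (a) fails.} The finger at a double point $p=\gamma_{t_0}(w)\cap\gamma_{t_1}(v)$ is not a local modification near $p$. It is stretched ahead of $\gamma_t(v)$ for all $t\in[t_1,1]$. So $\gamma_1^\bullet$ contains the tube $SN(T_R)|_{l_p}$ over the whole path $l_p=\bigcup_{t\in[t_1,1]}\gamma_t(v)\subset T_R$. This path ends near the standard circle $\gamma_1=\gamma_0$ next to $S$, and the tube is capped by a disk in the normal sphere of $\gamma_1$ at $\gamma_1(v)$.

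Perturbing the double points off $\beta_0^\bullet$ does not keep $l_p$ off $\beta_0^\bullet$. The sweep passes over all of $R_0$, and $R_0$ meets $\beta_0^\bullet$ precisely in the $S_i^1$; so ``the whole sweep happens in the complement of $\beta_0^\bullet$'' is also wrong. Each transverse crossing of $l_p$ with some $S_i^1$ produces a new component of $\gamma_1^\bullet\cap\beta_0^\bullet$, namely a meridian circle of $S_i^1$ in $\beta_0^\bullet$.

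Such crossings are unavoidable in general. If the second-visited sheet lies in $D_i$, then $l_p$ must cross $S_i^1$ to reach the $D_i'$ side where $\gamma$ is attached. This is exactly your ``subtle point'', which already contradicts (a). Each extra circle is an extra component of $A\cap B$ and contributes its own term $(s,\beta^*)\cdot[\gamma^*]$ to $\Theta(f_\beta)$. Your argument never sees these terms.

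\textbf{What is missing.} You need to verify that these terms vanish, which is the main content of the paper's proof.
\begin{enumerate}
\item The $\mathbb{Z}_2$-part is $0$ by the same codimension identification as for the $S_i^1$.
\item For the $\pi_2$-part, suppose the meridian lies over $x_m=\gamma_{t_m}(v)\in l_p$. The disk it bounds in $\gamma_1^\bullet$ is the rest of the finger $SN(T_R)|_{l_p[t_m,1]}$ together with the cap. This disk, united with the normal disk of $S_i^1$ at $x_m$ in $\beta_0^\bullet$, is the boundary of the $3$-ball $DN(T_R)|_{l_p[t_m,1]}\cup D^3$. Here $D^3$ is a piece of the normal $3$-disk of $\gamma_1$ at $\gamma_1(v)$.
\item Hence $\beta^*=0$ in $\pi_2(M,\beta_0^\bullet)$, and the term is $(0,0)\cdot[\alpha]=0$ regardless of $\alpha$.
\end{enumerate}
Once this is added, your (b)--(c) argument that $[\widetilde D_i]=[D_i]$ and your Steps 2--3 give the stated formula.
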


\begin{proof}
    By finger-pushing $R_0$ along $\gamma$ into $S$ to get $R$, we then attach a 1-handle, in the dotted version, we push $\beta_0^\bullet$ into $M\times I$ and cut out a neighborhood of it. On the top we just do surgery on $\nu S$ and get $T_R$ which is an immersed framed torus in $M\# S^1\times S^3$. On the left side of \Cref{fig:immersed-torus-and-standard-gamma} we draw the immersed $T_R$ in the dotted version with standard $\gamma_0^\bullet$ in green. Thus we get initial data $\gamma_0^\bullet$, $\{\gamma_t,t\in [0,1]\}\in \pi_1(\Emb(\nu S^1, M\# S^1\times S^3),*)$ and $\beta_t^\bullet=\beta_0^\bullet, t\in [0,1]$. To use \emph{one-parameter version of dotted and 0-framed replacement} and calculate $\Theta(f_\beta)$ we only need to find $\gamma_t^\bullet, t\in [0,1]$. 
    
    As in the process from the last section, we have $s_i=0$, $\beta_i^*$ and $\gamma_i^*$ can be calculated in $\gamma_1^\bullet$ and $\beta_1^\bullet=\beta_0^\bullet$. By the discussions just above the theorem, we isotope $\gamma_0^\bullet$ along $\gamma_t$ to $\gamma_t^\bullet$. Whenever we reach a self-intersection point $p=\gamma_{t_1}(v)$ ($\gamma_{t}: S^1\to T_R\subset M: w\to \gamma_{t}(w)$) the second time, the normal disk at the intersection point needs to be stretched along $l_p:=\cup_{t\in [t_1, 1]} \gamma_t(v)$. Thus in the case $T_R$ has just a single self-intersection point, $\gamma_1^\bullet=(\gamma_0^\bullet\text{ tube}_\gamma R_0-D^2) \cup S^1\times l_p \cup D_1^2 $ where $S^1\times l_p=SN(T_R)|_{l_p}$ (for $X$ immersed in $M$, we use $SN(X)$ to denote the sphere bundle of $N(X)$ which is the normal bundle of $X$ in $M$), that is, each $S^1$ is a meridian sphere, and $D_1^2\subset S^2=SN(\gamma_1)|_{\gamma_1(v)}$ which bounds $S^1=SN(T_R)|_{\gamma_1(v)}$ such that $D_1^2\cap \gamma_0^\bullet=\emptyset$ (see \Cref{fig:immersed-torus-and-standard-gamma} for an illustration). 

    When there are more double self-intersection points, by a nice reparametrization of $T_R$, we may assume the self-intersection points are $p_j=\gamma_{t_{0,j}}(w_j)\cap \gamma_{t_{1,j}}(v_j),t_{0,j}<t_{1,j}, j=1,...,n$ and $\{v_j\}$ are distinct from each other. Then let $l_{p_j}:=\cup_{t\in [t_{1,j}, 1]} \gamma_t(v_j)$. Then $\gamma_1^\bullet=(\gamma_0^\bullet\text{ tube}_\gamma R_0-\sqcup _{j=1,...,n}D^2) \cup_{j=1,...,n} (S^1\times l_{p_j}) \cup_{j=1,...,n} D_{1,n}^2$ where $S^1\times l_{p_j}=SN(T_R)|_{l_{p,j}}$ and $D_{1,j}^2\subset S^2=SN(\gamma_1)|_{\gamma_1(v_j)}$ which bounds $S^1=SN(T_R)|_{\gamma_1(v_j)}$ such that $D_j^2\cap \gamma_0^\bullet=\emptyset$.

    Then consider $\gamma_1^\bullet\cap \beta_1^\bullet$, it's $R\cap \beta_1^\bullet$ which are $k$ disjoint $S^1$, namely $S_i^1, i=1,...,k$, and several meridians of $S_i^1$ in $\beta_0^\bullet$. But for every meridian $S^1=SN(T_R)|_{x_m}, x_m=\gamma_{t_{j,m}}(v_j)\in l_{p_j}, t_{j,m}\in [t_{1,j},1]$ for some $j$ (like the one in purple in \Cref{fig:immersed-torus-and-standard-gamma}), $D^B=(SN(T_R)|_{l_{p_j}[t_{j,m},1]} \cup D_{1,j}^2)\subset \gamma_1^\bullet$ which bounds that $S^1$ is obviously null in $\pi_2 (M,\beta_0^\bullet)$, since it bounds $D^3=DN(T_R)|_{l_{p_j}[t_{j,m}, 1]}\cup D_{1,j}^3, D_{1,j}^3\subset D^3=DN(\gamma_1)|_{\gamma_1(v_j)}$ where $DN(T_R)$ denotes the normal disk bundle of $T_R$. Thus in the calculation of $\Theta$, every meridian $S^1$ corresponds to $(0,0)\cdot[\alpha]=0\in \Wh_1(\pi_1 M; \mathbb{Z}_2\times \pi_2 M)$, so it makes no contributions. So $\Theta(f_\beta)=\sum_{i=1,...,k} (0, \beta_i^*)\cdot [\gamma_i^*]$, but for the same reason, $\beta_i^*=[D_i^B]^{\delta_i^B}$, that is, stretching the disk near the intersection point along $l_{p_j}$ makes no difference in $\pi_2 (M)=\pi_2(M,\beta_0^\bullet)$. Thus, $\Theta(f_\beta)=\sum_{i=1,...,k} (0, [D^B_i]^{\delta_i^B})\cdot  [\delta_i^B]$.  
    
    \begin{figure}[!ht]
        \centering
        \includegraphics[width=0.75\textwidth]{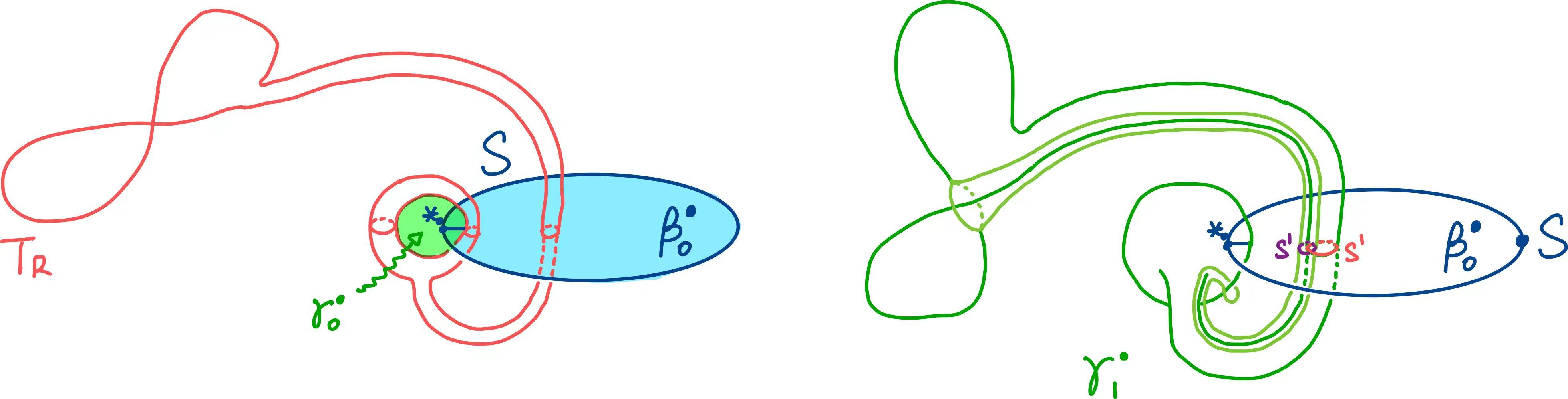}
        \caption{Here we illustrate immersed torus $T_R$ and the standard $\gamma_0^\bullet$ on the left and the desired $\gamma_1^\bullet$ on the right. If $T_R$ is embedded, $\gamma_1^\bullet\cap \beta_0^\bullet=I\cap S^1$ where $I$ is in blue and $S^1$ is in red. But since $T_R$ has a self-intersection point, by finger-pushing a small disk near the intersection point along $\gamma_t$ as we said in \Cref{fig:finger-push-ensure-immersion}, this induces another $S^1\subset \gamma_1^\bullet\cap \beta_0^\bullet$ in purple, which is exactly a meridian of the red $S^1$.}
        \label{fig:immersed-torus-and-standard-gamma}
    \end{figure}
\end{proof}

\begin{corollary} \label{cor:realize-all-elements-with-half-unknotted-immersed-barbell}
    Let $M^n$ be a smooth manifold with $n\geq 4$. For any $\sigma\in \pi_2 M$ with $w_2^M(\sigma)=0$, $\forall \alpha\in \pi_1 M$, there is a half-unknotted immersed barbell $\beta=\beta_{2,n-2}=(R, S, \gamma)$ with $S$ unknotted and $f_\beta\in \ker\Sigma\subset \pi_0\mathcal{P}$ with its Cerf diagram being a single eye of $(n-2,n-1)$-handle pair resulting in the immersed barbell diffeomorphism with respect to $\beta$ such that $\Theta(f_\beta)=(0,\sigma)\cdot [\alpha]$. In particular, when $n\geq 5$, $\beta$ is embedded.
\end{corollary}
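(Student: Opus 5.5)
We build the barbell by hand so that the formula of \Cref{thm:general-calculation-of-second-hatcher-wagoner-invariant-for-immersed-beta-2-n-2} (and, for $n\geq 5$, \Cref{thm:general-calculation-of-second-hatcher-wagoner-invariant-for-beta-2-n-2}) reduces to a single term. The aim is a barbell whose sphere $R_0$ meets the unknotted ball $\beta_0^\bullet$ in exactly one circle $S_1^1$. The inner disk $D_1$ cut off by this circle should represent $\sigma$ after basing, and the path $\delta_1^B$ should represent $\alpha$.

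\textbf{Step 1: the local model.} Take a small unknotted ball $\beta_0^\bullet\cong D^{n-1}$ in a coordinate chart, with $S=\partial\beta_0^\bullet$ and base point $*_0\in S$. Take a short arc $\gamma$ starting at $*_0$ and ending at a small $2$-sphere $R'$, all inside the chart. Let $R'$ be a small unknotted $2$-sphere that meets $\operatorname{int}(\beta_0^\bullet)$ transversely in one unknotted circle $S_1^1$. For instance, let $R'$ be the boundary of a small $3$-ball straddling a piece of $\beta_0^\bullet$. The circle $S_1^1$ cuts $R'$ into two small disks. Let $D_1'$ be the half containing $\gamma(1)$ and $D_1$ the other half. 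For now $[D_1\cup(\text{disk in }\beta_0^\bullet)]=0$.

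\textbf{Step 2: realize $\alpha$ and $\sigma$.} Choose a loop based at $*_0$ representing $\alpha$ and a map $S^2\to M$ representing $\sigma$, based via this loop as appropriate. Tube $D_1$, at an interior point away from $S_1^1$, along an arc to an immersed sphere $\Sigma$ representing $\sigma^{\alpha^{-1}}$ or $\sigma$, whichever the basing convention requires. Choose the arc so that the resulting $\delta_1^B$ runs from $*_0$ along $\gamma$, then into $R_0$, then to $p_1$, and together with the tube produces the class $\alpha$.

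A cleaner alternative is to keep $D_1$ small and instead route $\gamma$ along the loop $\alpha$. Then $\delta_1^B=\gamma*(\text{path in }R_0)$ represents $\alpha$ in $\pi_1(M,\beta_0^\bullet)$. After that, tube $D_1$ into a sphere representing $\sigma$, based at the end of $\gamma$. Pulling back along $\delta_1^B$ gives exactly $\sigma$ as an element of $\pi_2(M,*_0)$, and the formula yields $(0,\sigma)\cdot[\alpha]$.

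The arc $\gamma$ must then be shortened by finger-pushing, as in the theorem's hypothesis. Finger-pushing $R_0$ back along $\gamma$ does not change $R_0\cap\beta_0^\bullet$ provided $\gamma$ avoids $\operatorname{int}(\beta_0^\bullet)$. It transports the path into $R_0$ itself, so $\delta_1^B$ keeps its homotopy class.

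\textbf{Step 3: framing and genericity.} The barbell requires $R_0$ to be a framed immersed sphere. Its normal bundle must therefore be trivial, which is where the hypothesis $w_2^M(\sigma)=0$ is used. In dimension $4$ one also adjusts the Euler number by adding local kinks, which keeps $w_2$ unchanged and makes the normal bundle trivial. Self-intersections of $\Sigma$ are perturbed away from $\beta_0^\bullet$.

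For $n\geq 5$, general position lets us take $\Sigma$ and the tube embedded, since $2\cdot 2<n$ for $n\geq 5$. The normal bundle is then a stably trivial bundle of rank $n-2\geq 3$ over $S^2$ with $w_2=0$, hence trivial. So $\beta$ is an honest implanted barbell.

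\textbf{Step 4: conclude.} Apply \Cref{thm:general-calculation-of-second-hatcher-wagoner-invariant-for-immersed-beta-2-n-2} for $n=4$, or \Cref{thm:general-calculation-of-second-hatcher-wagoner-invariant-for-beta-2-n-2} for $n\geq5$. Its proof works verbatim in dimension $n$ for immersed $R_0$, because meridional circles contribute nothing. Together with \Cref{cor:changing-from-i-1-i-handle-pair-to-n-i-n-i-1-handle-pair}, which gives $\Sigma(f_\beta)=0$, this yields $\Theta(f_\beta)=(0,\sigma)\cdot[\alpha]$.

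The main obstacle is the bookkeeping of basings. One must check that the class $[D_1]^{\delta_1^B}$ is exactly $\sigma$, and not a conjugate or $-\sigma$; the sign can be fixed by choosing the orientation of the tube. One must also check that the finger-push shortening $\gamma$ preserves both the class $\delta_1^B$ and the triviality of the framing.
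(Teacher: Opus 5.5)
Your ``cleaner alternative'' in Step 2 is correct and is essentially the paper's proof. The paper takes $R_\sigma$ disjoint from a small $\beta_0^\bullet$ and lets $\gamma=\gamma_1*\gamma_2$ cross $\operatorname{int}(\beta_0^\bullet)$ once at $p$, with $\gamma_1$ representing $\alpha$ and $\gamma_2$ chosen so that $\sigma^{(\gamma_1*\gamma_2)^{-1}}=\sigma$. Finger-pushing $R_\sigma$ back along $\gamma$ gives exactly your one-circle configuration, and the framing is handled through $w_2^M(\sigma)=0$ as you do (interior twists for $n=4$, general position for $n\geq 5$).

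Drop the first option in Step 2, though. With $\gamma$ short, $\delta_1^B$ stays in the chart and is trivial in $\pi_1(M,\beta_0^\bullet)$ however the tube on $D_1$ is routed, since that arc only rebases $\sigma$. So that option only produces $(0,\sigma')\cdot[1]=0$ in $\Wh_1$.
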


\begin{proof}
    For $\sigma$ satisfying the condition, we first find $R$ which is an immersed 2-sphere with $q\in R$, such that $R$ represents $\sigma\in \pi_2(M,q)$. Since $w_2^M(\sigma)=0$, when $n\geq 5$, by slightly perturbing $R$ we get an embedded framed $R_\sigma$ representing $\sigma$; when $n=4$, we can induce some interior twists (see \cite[section 1.3]{9ac3db80-cd43-35c8-b9b4-a671794480ea}) locally on $R$ away from $q$ to get $R_\sigma$ which is a framed immersed 2-sphere representing the same $\sigma\in \pi_2(M,q)$. Then find a small embedded $\beta_0^\bullet: D^{n-1}\hookrightarrow M$ away from $R_\sigma$ with base point $*_0\in S:=\partial \beta_0^\bullet$. Fix $p\in int(\beta_0^\bullet)$. Find a path $\gamma_1$ from $*_0$ to $p$ such that $int(\gamma_1)\cap (\beta_0^\bullet\sqcup R_\sigma)=\emptyset$, $\gamma_1$ represents $\alpha\in \pi_1(M,\beta_0^\bullet)$. Find $\gamma_2$ from $p$ to $q$ such that $int(\gamma_2)\cap (\beta_0^\bullet\sqcup R_\sigma)=\emptyset$, $(\gamma_1*\gamma_2)^{-1}$ induces a natural isomorphism from $\pi_2(M,q)$ to $\pi_2(M,*_0)$: $\tau\to \tau^{(\gamma_1*\gamma_2)^{-1}}$, we need $\sigma^{(\gamma_1*\gamma_2)^{-1}}=\sigma$. Let $\gamma=\gamma_1*\gamma_2$, then let $\beta=(R_\sigma, S, \gamma)$. By \Cref{thm:general-calculation-of-second-hatcher-wagoner-invariant-for-beta-2-n-2} and \Cref{thm:general-calculation-of-second-hatcher-wagoner-invariant-for-immersed-beta-2-n-2}, $\Theta(f_\beta)=(0,\sigma)\cdot [\alpha]$.

    \begin{figure}[!ht]
        \centering
        \includegraphics[width=0.3\textwidth]{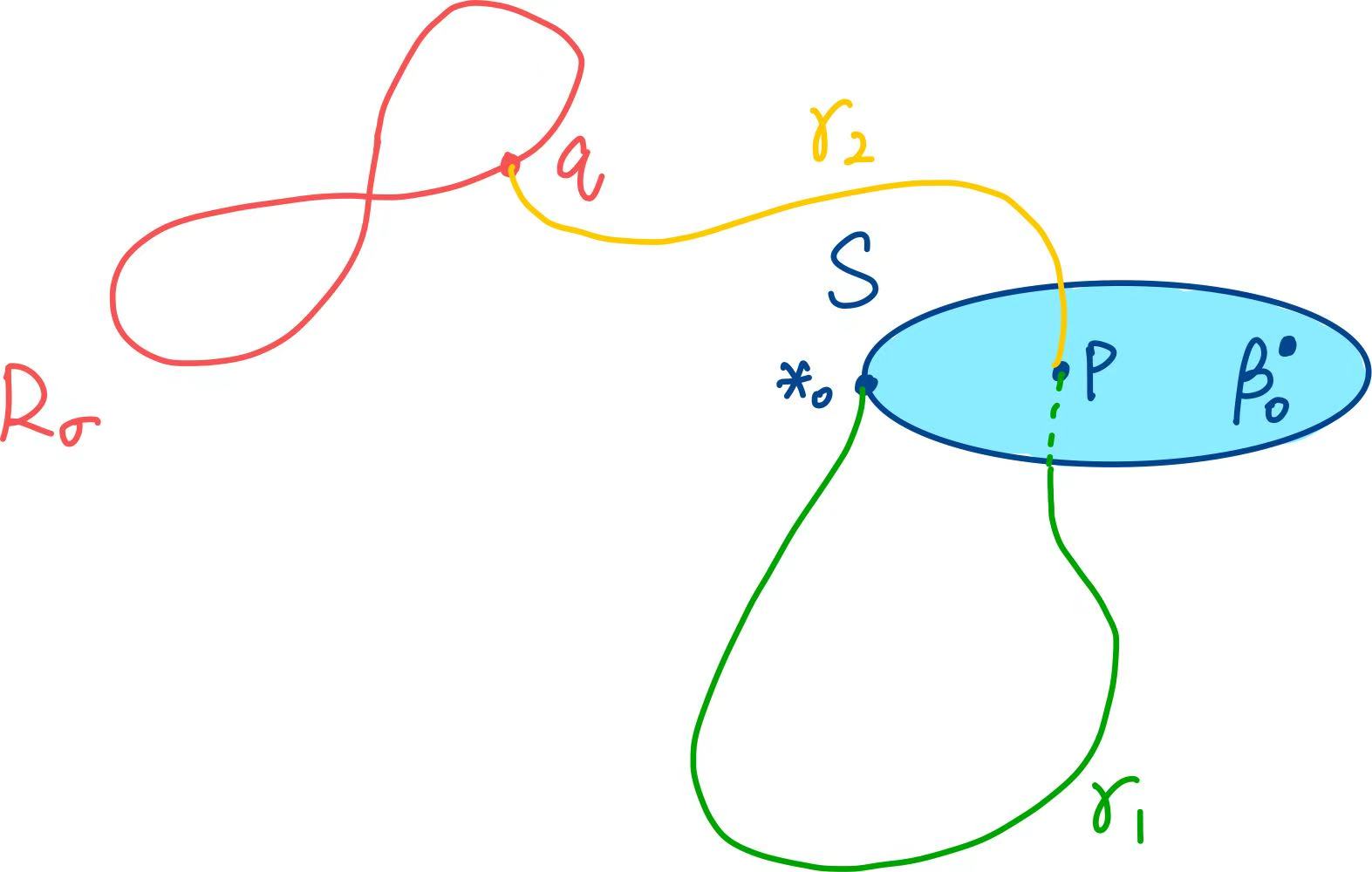}
        \caption{An illustration of $\beta=(R_\sigma, S,\gamma=\gamma_1*\gamma_2)$}
    \end{figure}
\end{proof}

When there is an odd 2-sphere $\tau\in \pi_2M$ (i.e. $w_2^M(\tau)=1$), we can do more (the key point is that the $\pi_2$ component of $\Theta$ only depends on $D_i$, not on $D_i'$):

\begin{corollary} \label{cor:realize-all-elements-with-half-unknotted-immersed-barbell-even-odd}
    If there exists $\tau\in \pi_2 (M^n)$ with $w_2^M(\tau)=1$, then for any $\sigma\in \pi_2M$, $\forall \alpha\in \pi_1 M$, there is a half-unknotted immersed barbell $\beta=\beta_{2,n-2}=(R, S, \gamma)$ and $f_\beta\in \ker\Sigma\subset \pi_0\mathcal{P}$ with its Cerf diagram being a single eye of $(n-2,n-1)$-handle pair resulting in the immersed barbell diffeomorphism with respect to $\beta$ such that $\Theta(f_\beta)=(0,\sigma)\cdot [\alpha]$. In particular, when $n\geq 5$, $\beta$ is embedded.
\end{corollary}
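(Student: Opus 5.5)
The plan is to reduce to \Cref{cor:realize-all-elements-with-half-unknotted-immersed-barbell} when $w_2^M(\sigma)=0$. In the remaining case $w_2^M(\sigma)=1$, we use the freedom noted just before the statement. The $\pi_2$-component of $\Theta(f_\beta)$ from \Cref{thm:general-calculation-of-second-hatcher-wagoner-invariant-for-beta-2-n-2} and \Cref{thm:general-calculation-of-second-hatcher-wagoner-invariant-for-immersed-beta-2-n-2} only sees the disk $D_i$ cut off by $S_i^1$, not the complementary disk $D_i'$ attached to the arc. So we build a sphere $R_0$ which as a whole represents an even class, hence can be taken framed (embedded if $n\geq 5$, immersed with interior twists if $n=4$), but whose piece beyond a single circle of intersection with $\beta_0^\bullet$ represents $\sigma$.

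Concretely, suppose $w_2^M(\sigma)=1$ and set $\sigma'=\sigma+\tau$, so that $w_2^M(\sigma')=0$ and $w_2^M(\tau)=1$. Choose immersed spheres representing $\tau$ and $\sigma$, and tube them together along an arc. The resulting sphere $R$ represents $\tau+\sigma$ up to basing, with appropriate signs and orientations chosen so the sum is the intended one. Since $w_2^M$ is additive on $\pi_2$, $R$ has even $w_2$. As in the proof of \Cref{cor:realize-all-elements-with-half-unknotted-immersed-barbell}, perturb $R$ (if $n\ge5$) or add local interior twists (if $n=4$) to make it framed. Perform these modifications only on the $\tau$-side, away from the tube.

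Next, place a small unknotted ball $\beta_0^\bullet$ so that its interior meets $R$ exactly in one circle $S_1^1$: the meridian circle of the connecting tube. We realize this by letting a short sub-arc of the tube pass through the ball, in the standard way a tube crosses a hypersurface. The circle then splits $R$ into the $\tau$-side and the $\sigma$-side. Attach the barbell arc $\gamma$ from $*_0\in S=\partial\beta_0^\bullet$ to the $\tau$-side, so that $D_1'$ is the $\tau$-side and $D_1^B=D_1$ is the $\sigma$-side. Route the arc, and the path $\delta_1^B$ from $*_0$ to $p_1\in S_1^1$, so that $\delta_1^B$ represents $\alpha$ in $\pi_1(M,\beta_0^\bullet)$; this mirrors the choice of $\gamma_1$ in the previous corollary. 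Then adjust the basing path on the $\sigma$-side so that $[D_1]^{\delta_1^B}=\sigma$; any change of basing of $\sigma$ can be absorbed by the choice of the tube's arc. The theorem then gives $\Theta(f_\beta)=(0,\sigma)\cdot[\alpha]$, with $f_\beta\in\ker\Sigma$ and a single eye of $(n-2,n-1)$-handle pair by \Cref{cor:changing-from-i-1-i-handle-pair-to-n-i-n-i-1-handle-pair}.

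The main obstacle is consistency of basings. We need $\delta_1^B$ to lie in $\beta$ and to represent $\alpha$, while simultaneously $[D_1]^{\delta_1^B}=\sigma$ and the arc $\gamma$ lands on the $\tau$-side. The plan is to let $\gamma$ run from $*_0$ along a loop representing $\alpha$ back near the ball, and then reach the tube near $S_1^1$. We take $\delta_1^B$ to be $\gamma$ followed by a short path on $D_1'$ to $p_1$. Basing ambiguities of $\tau$ are irrelevant, since $\tau$ only enters the framing condition. We must also check that the twists introduced on the $\tau$-side when $n=4$ lie in $D_1'$ and therefore do not affect $[D_1]$; this holds because the twists are local and we performed them away from the tube.
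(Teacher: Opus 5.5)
Your proposal is correct and follows essentially the same route as the paper. You handle the odd case by tubing $R_\sigma$ to $R_\tau$ along an arc $\gamma_0$ and framing the resulting even sphere. You then take $\beta_0^\bullet$ to be a small normal disk to $\gamma_0$, so that it meets $R$ in a single meridian circle, and attach the barbell arc on the $\tau$-side so that $D_1^B$ is the $\sigma$-side. Your bookkeeping of basings, choosing $\gamma$ so that $\delta_1^B$ represents $\alpha$ and then using the $\sigma$-half of the tube arc to fix the basing of $\sigma$, spells out what the paper compresses into ``by suitably choosing $\gamma_0$ and $\gamma$''.
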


\begin{proof}
    For $\sigma\in \pi_2 M$ with $w_2^M(\sigma)=0$, the result just follows from \Cref{cor:realize-all-elements-with-half-unknotted-immersed-barbell}. For $w_2^M(\sigma)=1$. We find an immersed 2-sphere $R_\sigma$ representing $\sigma$, and an immersed 2-sphere $R_\tau$ representing $\tau$. Connect $R_\tau$ and $R_\sigma$ with a tube along an embedded arc $\gamma_0$ disjoint from both $R_\sigma$ and $R_\tau$ to get $R(\gamma_0)$. Then, when $n=4$, $R(\gamma_0)$ can be made into a framed immersed 2-sphere by inducing some interior twists locally; when $n\geq 5$, by slightly perturbing, $R(\gamma_0)$ can be made into a framed embedded 2-sphere. Find a small normal disk $\beta_0^\bullet$ on $\gamma_0$, let $*_0\in S=\partial \beta_0^\bullet$ be the base point. Find another embedded path $\gamma$ from $*_0$ to $p_\tau\in R_\tau$ away from $\beta_0^\bullet$ and $R(\gamma_0)$. See \Cref{fig:proof-of-corollary-6.4} for an illustration. By suitably choosing $\gamma_0$ and $\gamma$, we can find $\beta=(R(\gamma_0), S, \gamma)$ such that $\Theta(f_\beta)=(0,\sigma)\cdot[\alpha]$. 
    \begin{figure}[!ht]
        \centering
        \includegraphics[width=0.3\textwidth]{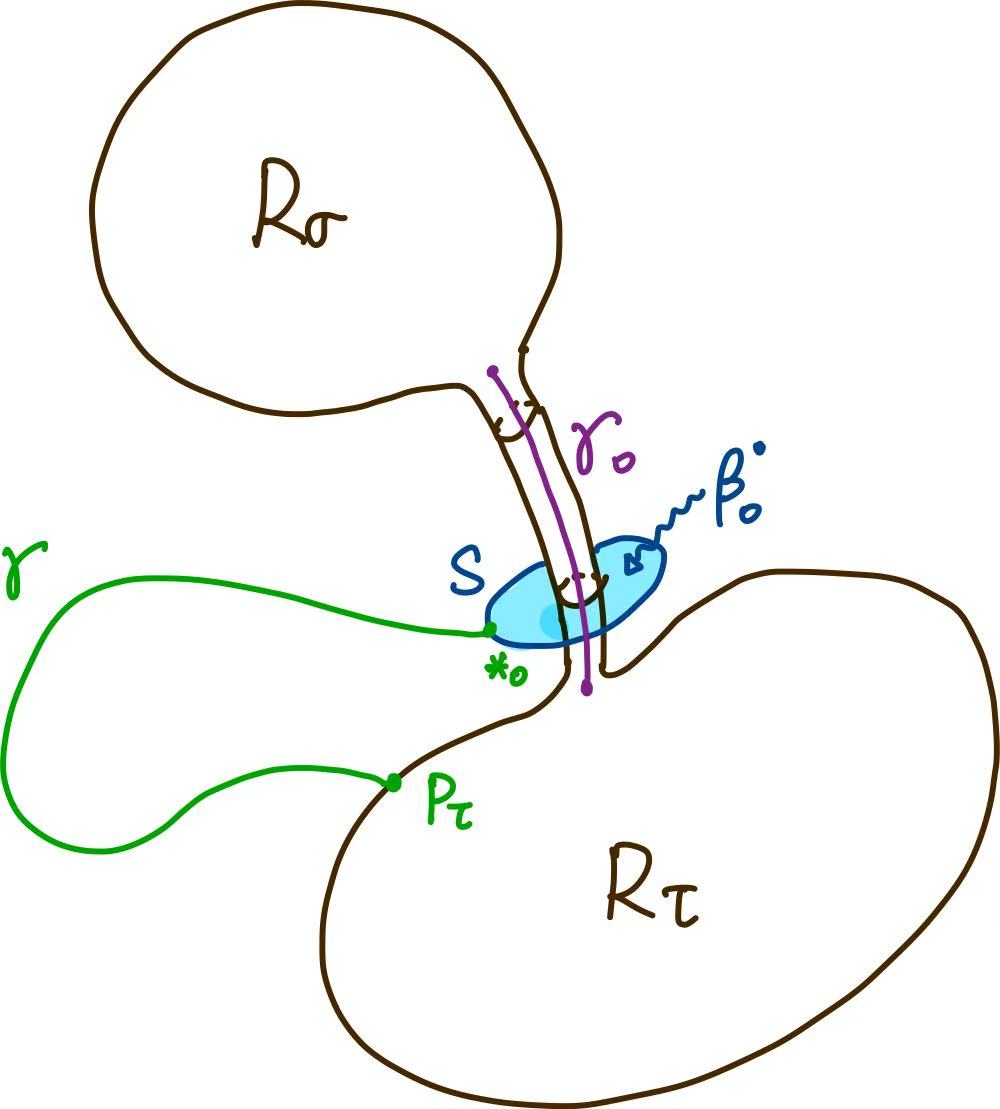}
        \caption{An illustration for the proof of \Cref{cor:realize-all-elements-with-half-unknotted-immersed-barbell-even-odd}}
        \label{fig:proof-of-corollary-6.4}
    \end{figure}
\end{proof}

Singh proved in \cite{singh2022pseudoisotopiesdiffeomorphisms4manifolds} that for any compact $M^4$, $\Xi=\langle(s,\sigma)\cdot[\gamma]| s=0 \text{ or }w_2^M(\sigma)\neq 0\rangle\subset \Wh_1(\pi_1 M; \mathbb{Z}_2\times \pi_2 M)$, we have $\Xi\subset \Theta(\ker \Sigma)$. Note that by \Cref{rmk:orientation-reversing-cobordism-for-hatcher-wagoner-invariants,rmk:involution}, if $\exists \tau\in \pi_2 M$ with $w_2^X(\tau)=1$, then for $\beta=\beta_{2,n-2}$ with $\Theta(f_\beta)=(0,\tau)\cdot [\alpha]$, $\Theta(g_\beta)=(-1)^{n+1}\bar\Theta (f_\beta)=(-1)^n(1, w_1^M(\alpha)\tau^{\alpha^{-1}})\cdot[\alpha^{-1}]$. Thus, combining the above results we get:

\begin{corollary}
    For a 4-dimensional manifold $M$, given any $(s,\sigma)\cdot[\gamma]\in \Wh_1(\pi_1 M; \mathbb{Z}_2\times \pi_2 M)$ with $s=0$ or $w_2^M(\sigma)\neq 0$,

    \begin{enumerate}[label=\em(\arabic*)]
        \item If $s=0$, there exists a half-unknotted immersed barbell $\beta$ and $f_\beta\in \ker\Sigma\subset \pi_0\mathcal{P}$ resulting in the immersed barbell diffeomorphism with respect to $\beta$, whose Cerf diagram contains a single eye of (2,3)-handle pair, satisfying $\Theta(f_\beta)=(s,\sigma)\cdot[\gamma]$.
        \item If $s=1$ and $w_2^M(\sigma)=1$, there exists a half-unknotted immersed barbell $\beta$ and $g_\beta\in \ker\Sigma\subset \pi_0\mathcal{P}$ resulting in the immersed barbell diffeomorphism with respect to $\beta$, whose Cerf diagram contains a single eye of (1,2)-handle pair, satisfying $\Theta(g_\beta)=(s,\sigma)\cdot[\gamma]$. \qedhere
    \end{enumerate}
\end{corollary}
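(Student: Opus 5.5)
Part (1) follows directly from the two realization corollaries, and part (2) from the duality formula applied to a suitable $f_\beta$.

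\textbf{Part (1), $s=0$.} We want $\Theta(f_\beta)=(0,\sigma)\cdot[\gamma]$.
If $w_2^M(\sigma)=0$, apply \Cref{cor:realize-all-elements-with-half-unknotted-immersed-barbell} with $\alpha=\gamma$. It gives a half-unknotted immersed barbell $\beta$ and $f_\beta\in\ker\Sigma$ whose Cerf diagram is a single eye of a $(2,3)$-handle pair ($n=4$), with $\Theta(f_\beta)=(0,\sigma)\cdot[\gamma]$.
If $w_2^M(\sigma)=1$, take $\tau=\sigma$ as the odd class in \Cref{cor:realize-all-elements-with-half-unknotted-immersed-barbell-even-odd}, which then produces the same conclusion. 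In both cases $\Sigma(f_\beta)=0$ holds by \Cref{cor:changing-from-i-1-i-handle-pair-to-n-i-n-i-1-handle-pair}, since a single eye involves no handle slides.

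\textbf{Part (2), $s=1$ and $w_2^M(\sigma)=1$.} We use the duality of \Cref{rmk:orientation-reversing-cobordism-for-hatcher-wagoner-invariants} together with the involution of \Cref{rmk:involution}. For $n=4$ the sign is $(-1)^{n+1}=-1$, so
$$\Theta(g_\beta)=-\overline{(0,\tau)\cdot[\alpha]}=-(w_2^M(\tau),\,-w_1^M(\alpha)\tau^{\alpha^{-1}})\cdot[\alpha^{-1}]=(1,\,w_1^M(\alpha)\tau^{\alpha^{-1}})\cdot[\alpha^{-1}],$$
using that the $\mathbb{Z}_2$-component is unaffected by the sign. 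We want this to equal $(1,\sigma)\cdot[\gamma]$, so we choose
$$\alpha=\gamma^{-1},\qquad \tau=w_1^M(\gamma^{-1})\,\sigma^{\gamma^{-1}}.$$
Then $\tau^{\alpha^{-1}}=\tau^{\gamma}$, and the $\pi_1$-action and the orientation character compose so that $w_1^M(\alpha)\tau^{\alpha^{-1}}=\sigma$.

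This choice of $\tau$ must still satisfy $w_2^M(\tau)=1$. That holds because $w_2$ of the normal bundle is invariant under changing the basing path and under reversing orientation, so $w_2^M(\tau)=w_2^M(\sigma)=1$. We can therefore apply \Cref{cor:realize-all-elements-with-half-unknotted-immersed-barbell-even-odd}, with odd class $\tau$ itself, to get $\beta$ and $f_\beta$ with $\Theta(f_\beta)=(0,\tau)\cdot[\alpha]$.

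The construction of \Cref{prop:twin-twist-is-pseudo-isotopic-to-identity-when-s-unknotted}, adapted to immersed barbells as explained in \cref{sec:generalizations-to-immersed-barbell-diffeomorphisms}, gives $g_\beta$. It results in the same immersed barbell diffeomorphism, has a single eye of a $(1,2)$-handle pair, and satisfies $\Sigma(g_\beta)=0$. Its invariant is then $\Theta(g_\beta)=(1,\sigma)\cdot[\gamma]$.

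\textbf{Main obstacle.} The delicate step is bookkeeping rather than geometry.
\begin{enumerate}
\item The involution must be applied in $\Wh_1(\pi_1M;\mathbb{Z}_2\times\pi_2M)$ before passing to the quotient by $\chi(K_3\mathbb{Z}[\pi_1M])$, as \Cref{rmk:involution} warns.
\item The sign conventions and the $\pi_1$-action conventions must be checked, since $\alpha^\tau$ versus $\alpha^{\tau^{-1}}$ matters.
\item The duality proof in \Cref{rmk:orientation-reversing-cobordism-for-hatcher-wagoner-invariants} must be verified to carry over verbatim to the immersed case, where $g_\beta$ comes from $[\nu T_R]$ for an immersed torus. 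I expect it does, since that proof only uses the pointwise dotted and 0-framed replacement.
\end{enumerate}
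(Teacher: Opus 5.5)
Your proposal is correct and follows the paper's own argument: part (1) comes from the two realization corollaries, the even case directly and the odd case by taking $\sigma$ itself as the odd class. Part (2) comes from the duality $\Theta(g_\beta)=(-1)^{n+1}\bar\Theta(f_\beta)$ applied to an $f_\beta$ with $\Theta(f_\beta)=(0,\tau)\cdot[\alpha]$ and $\tau$ odd; your explicit choice $\alpha=\gamma^{-1}$, $\tau=w_1^M(\gamma^{-1})\,\sigma^{\gamma^{-1}}$ spells out what the paper leaves implicit in the formula $(-1)^n(1,w_1^M(\alpha)\tau^{\alpha^{-1}})\cdot[\alpha^{-1}]$ stated just before the corollary.
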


We have the parallel version for higher dimensional manifold $M^n,n\geq 5$:

\begin{corollary} \label{cor:realize-all-elements-with-half-unknotted-immersed-barbell-even-odd-higher-dimensions}
    For a $n$-dimensional manifold $M$ with $n\geq 5$, given any $(s,\sigma)\cdot[\gamma]\in \ M; \pi_1 M)$ with $s=0$ or $w_2^M(\sigma)\neq 0$,

    \begin{enumerate}[label=\em(\arabic*)]
        \item If $s=0$, there exists a half-unknotted implanted barbell $\beta=\beta_{2,n-2}$ with the $(n-2)$-sphere unknotted in $M$ and $f_\beta\in \ker\Sigma\subset \pi_0\mathcal{P}$ resulting in the implanted barbell diffeomorphism with respect to $\beta$, whose Cerf diagram contains a single eye of $(n-2,n-1)$-handle pair, satisfying $\Theta(f_\beta)=(s,\sigma)\cdot[\gamma]$.
        \item If $s=1$ and $w_2^M(\sigma)=1$, there exists a half-unknotted implanted barbell $\beta=\beta_{2,n-2}$ and $g_\beta\in \ker\Sigma\subset \pi_0\mathcal{P}$ resulting in the implanted barbell diffeomorphism with respect to $\beta$, whose Cerf diagram contains a single eye of $(1,2)$-handle pair, satisfying $\Theta(g_\beta)=(s,\sigma)\cdot[\gamma]$. \qedhere
    \end{enumerate}
\end{corollary}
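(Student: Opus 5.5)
For part (1) I would treat the two cases exactly as the 4-dimensional corollary was deduced, using that for $n\geq 5$ the immersed barbells produced earlier are actually embedded. Given $(0,\sigma)\cdot[\gamma]$, if $w_2^M(\sigma)=0$ I apply \Cref{cor:realize-all-elements-with-half-unknotted-immersed-barbell}; if $w_2^M(\sigma)=1$ there is an odd class (namely $\sigma$ itself), so \Cref{cor:realize-all-elements-with-half-unknotted-immersed-barbell-even-odd} applies. In both cases the corollaries give, for $n\geq 5$, an embedded framed $2$-sphere $R$ (a generic perturbation of a map $S^2\to M^n$ is an embedding once $n\geq 5$, and the framing exists because the relevant $w_2$ of the normal bundle vanishes). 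They also give an unknotted $S=\partial\beta_0^\bullet$ and an arc. \Cref{cor:changing-from-i-1-i-handle-pair-to-n-i-n-i-1-handle-pair} then supplies $f_\beta\in\ker\Sigma$ with a single $(n-2,n-1)$ eye, and \Cref{thm:general-calculation-of-second-hatcher-wagoner-invariant-for-beta-2-n-2} computes $\Theta(f_\beta)=(0,\sigma)\cdot[\gamma]$.

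For part (2), suppose $s=1$ and $w_2^M(\sigma)=1$. I would use \Cref{rmk:orientation-reversing-cobordism-for-hatcher-wagoner-invariants}: $\Theta(g_\beta)=(-1)^{n+1}\bar\Theta(f_\beta)$, together with the involution formula of \Cref{rmk:involution}. The strategy is to find the input for $f_\beta$ by solving backwards. Choose $\alpha=\gamma^{-1}$ and a class $\tau\in\pi_2M$, and set $\Theta(f_\beta)=(0,\tau)\cdot[\gamma^{-1}]$. The involution then gives
\[(-1)^{n+1}\bigl(w_2^M(\tau),\,-w_1^M(\gamma^{-1})\tau^{\gamma}\bigr)\cdot[\gamma].\]
The $\mathbb{Z}_2$ component is insensitive to the sign, so it equals $w_2^M(\tau)$. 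For this to be $1$ we need $w_2^M(\tau)=1$. We then solve $(-1)^{n}w_1^M(\gamma)\,\tau^{\gamma}=\sigma$ for $\tau$, which gives $\tau=(-1)^n w_1^M(\gamma)\,\sigma^{\gamma^{-1}}$.

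Next I check that $w_2^M(\tau)=w_2^M(\sigma)=1$. This holds because $w_2$ is a homomorphism $\pi_2M\to\mathbb{Z}_2$ that is insensitive to sign, and it is invariant under the $\pi_1$-action: transporting a sphere along a loop preserves the normal bundle's $w_2$. Since $w_2^M(\tau)=1$ there is an odd class, so part (1) applied to $(0,\tau)\cdot[\gamma^{-1}]$ produces an embedded half-unknotted $\beta_{2,n-2}$ with $\Theta(f_\beta)=(0,\tau)\cdot[\gamma^{-1}]$. The corresponding $g_\beta$ comes from the corollary preceding \Cref{rmk:orientation-reversing-cobordism-for-hatcher-wagoner-invariants}. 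It has a single $(1,2)$ eye and results in the same barbell diffeomorphism, and $\Sigma(g_\beta)=0$ holds by that proposition. Its invariant is $\Theta(g_\beta)=(1,\sigma)\cdot[\gamma]$.

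The main obstacle I expect is the caveat of \Cref{rmk:involution}. The identity $\Theta(g_\beta)=(-1)^{n+1}\bar\Theta(f_\beta)$ holds only modulo $\chi(K_3\mathbb{Z}[\pi_1M])$, and it requires applying the involution at the level of $\Wh_1$ before passing to the quotient. I would phrase the conclusion as equality in $\Wh_1(\pi_1M;\mathbb{Z}_2\times\pi_2M)/\chi(K_3\mathbb{Z}[\pi_1M])$, which is where $\Theta$ takes values anyway. A secondary point to verify is the sign and orientation bookkeeping, especially the $w_1^M(\gamma)$ sign under $\gamma\mapsto\gamma^{-1}$, since $w_1$ is a homomorphism to $\{\pm1\}$.
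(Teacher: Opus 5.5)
Your proposal is correct and follows the route the paper intends. Part (1) comes from the two realization corollaries (even class, or an odd class existing, with $\beta$ embedded once $n\geq 5$). Part (2) comes from applying $\Theta(g_\beta)=(-1)^{n+1}\bar\Theta(f_\beta)$ to an $f_\beta$ with $\Theta(f_\beta)=(0,\tau)\cdot[\gamma^{-1}]$. Your explicit choice $\tau=(-1)^n w_1^M(\gamma)\,\sigma^{\gamma^{-1}}$ is odd because $w_2^M$ is additive and $\pi_1$-invariant, and it spells out the step the paper leaves implicit. Your remark that the equality only holds modulo $\chi(K_3\mathbb{Z}[\pi_1 M])$ also matches how the paper states it.
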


But in \cite{AST_1973__6__1_0} Hatcher and Wagoner showed that when $n\geq 5$, $\Theta: \ker\Sigma\to \Wh_1(\pi_1 M; \mathbb{Z}_2\times \pi_2 M)/\chi(K_3\mathbb{Z}[\pi_1 M])$ is surjective. Moreover, it is bijective when $n\geq 6$. To give a refinement of that result, for every $\gamma\in \pi_1 M$, we will construct a half-unknotted implanted barbell $\beta=\beta_{3,n-3}$ with $\Theta(f_\beta)=(1,0)\cdot[\gamma]$ in the next section.

In particular, for $M=(X_1\# X_2)\times I$ with $X_i$ closed, orientable, aspherical 3-manifold, Singh used $W$ and approximations for $K_3(\mathbb{Z}[\pi_1 M])$ and $\Theta(\mathcal{J}\cap \ker \Sigma)$ to show that there is $K\subset \pi_0\Diff_{PI}(M,\partial)$ and a surjection $K\to \bigoplus_{i\in \mathbb{N}} \mathbb{Z}$. Now by the above corollary, we have:

\begin{corollary}
    Let $M=(X_1\# X_2)\times I$ with $X_i$ closed, orientable, aspherical 3-manifold. In this case all $\sigma\in \pi_2 M=\mathbb{Z}[\pi_1 X_1*\pi_1 X_2]$ can be realized by embedded $S^2$ with $w_2^M(\sigma)=0$. Then $$\langle\text{implanted half-unknotted barbell diffeomorphisms}\rangle\subset \pi_0\Diff_{PI}(M,\partial)$$ is infinitely generated and of infinite $\mathbb{Z}$-rank. 
\end{corollary}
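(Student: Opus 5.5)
The plan is to reduce the corollary to \Cref{thm:general-calculation-of-second-hatcher-wagoner-invariant-for-beta-2-n-2} and to a rank computation that Singh has already carried out. Write $\pi=\pi_1M=\pi_1X_1*\pi_1X_2$. First I would check that every class in $\pi_2M$ has $w_2^M=0$ and is represented by an embedded $2$-sphere. Since $X_1,X_2$ are aspherical, $\pi_2(X_1\#X_2)$ is generated as a $\Z[\pi]$-module by the separating connect-sum sphere $\Sigma$, and $\Sigma$ bounds in each summand. Because $M=(X_1\#X_2)\times I$ is orientable and is a $3$-manifold times an interval, its tangent bundle restricted to $\Sigma$ is stably trivial; so $w_2^M(\sigma)=0$ for all $\sigma$.

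For embedded representatives, an arbitrary $\sigma=\sum a_g g\Sigma$ is an immersed sum of translates of $\Sigma$. Since $\dim M=4$, I would use the $I$-direction to push the different parallel copies to different heights, tube them together, and resolve intersections. Alternatively, and more simply, I would only need embedded representatives of the generators $g\cdot\Sigma$. In that case I take $R_0=\Sigma$ pushed into $M\times\{1/2\}$, together with an arc $\gamma$ realizing $g$.

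Next, I apply \Cref{cor:realize-all-elements-with-half-unknotted-immersed-barbell} in the embedded case. For each $g\in\pi$ and each $\alpha\in\pi$, this gives an embedded half-unknotted barbell $\beta_{g,\alpha}$ with $\Theta(f_{\beta_{g,\alpha}})=(0,g\Sigma)\cdot[\alpha]$. Here the single-eye $(2,3)$ pseudo-isotopy has $\Sigma(f_\beta)=0$. The resulting diffeomorphisms $F_{\beta}$ lie in $\pi_0\Diff_{PI}(M,\partial)$, and their induced invariants $\Theta$ land in
\[
\Wh_1(\pi;\Z_2\times\pi_2M)\big/\bigl(\Theta(\mathcal J\cap\ker\Sigma)+\chi(K_3\Z[\pi])\bigr).
\]
Since induced $\Theta$ is a homomorphism on the subgroup $\pi_0F(\ker\Sigma)$, it suffices to show two things. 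The subgroup of this quotient generated by the classes $(0,g\Sigma)\cdot[\alpha]$ must have infinite $\Z$-rank. And that subgroup must not be finitely generated, or alternatively it must surject onto $\bigoplus_{\mathbb N}\Z$.

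For the last step I would invoke Singh's computation directly. Singh's subgroup $K\subset\pi_0\Diff_{PI}(M,\partial)$ is built from elements whose $\Theta$ values are of the form $(0,\sigma)\cdot[\gamma]$. He proves that the quotient map carries the span of these values onto a group admitting a surjection to $\bigoplus_{i\in\mathbb N}\Z$, using his approximations to $K_3$ and to $\Theta(\mathcal J\cap\ker\Sigma)$. Since $w_2^M\equiv0$, the values used by Singh all lie in $\Xi$ with $s=0$, and each is a finite sum of terms $(0,g\Sigma)\cdot[\alpha]$ already realized by barbells. So the image of the barbell subgroup under $\Theta$ contains Singh's image and therefore also surjects onto $\bigoplus_{\mathbb N}\Z$. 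It follows that the barbell subgroup is infinitely generated and of infinite rank.

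The main obstacle is this compatibility step. One has to make sure Singh's surjection really factors through the span of $s=0$ classes that we realize, and not merely through elements he constructs by other means. I would first try to read off from his construction that his generators are exactly $(0,\sigma)[\gamma]$ built by Norman-type tricks. If that fails, I would redo his rank argument with an explicit family, say $(0,\Sigma)\cdot[(a b)^k]$ for $a\in\pi_1X_1$ and $b\in\pi_1X_2$ nontrivial, together with his $K_3$ bound.
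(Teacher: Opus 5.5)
Your proposal is correct and follows the paper's own (implicit) argument. Since $w_2^M\equiv 0$, the realization corollary gives implanted half-unknotted $(2,2)$-barbells with $\Theta(f_\beta)=(0,\sigma)\cdot[\alpha]$ for all $\sigma$ and $\alpha$, so the induced $\Theta$-image of the barbell subgroup contains the span of $\Xi$, and Singh's surjection $K\to\bigoplus_{\mathbb N}\Z$ then transfers; as you rightly flag, the paper also tacitly assumes that this surjection factors through $\Theta$ on $\Xi$. Your use of only the embedded generators $g\cdot\Sigma$ together with additivity of $\Theta$ is a harmless streamlining of the paper's assertion that every $\sigma\in\pi_2M$ is represented by an embedded sphere.
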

\section{A special barbell $\beta_{3,n-3}$ realizes surjectivity when $n\geq 6$}
\label{sec:a-special-barbell-beta-3-n-3-realizes-surjectivity-when-n-geq-6}

This section is devoted to constructing a half-unknotted implanted barbell $\beta=\beta_{3,n-3}$ in $M^n$ with $n\geq 6$ for every $\gamma\in \pi_1 M$, such that the associated $f_\beta\in \pi_0\mathcal{P}$ (see \Cref{cor:changing-from-i-1-i-handle-pair-to-n-i-n-i-1-handle-pair}) resulting in that barbell diffeomorphism satisfies $\Theta(f_\beta)=(1,0)\cdot[\gamma]$ (the construction is largely inspired by Hatcher's work in \cite[section 4]{hatcher1978concordance}). Together with \Cref{cor:realize-all-elements-with-half-unknotted-immersed-barbell-even-odd-higher-dimensions}, we show that every element $a\in \ker \Sigma\subset \pi_0\mathcal{P}$ has a representative which is a composition of some $f_\beta$, $\beta=\beta_{2,n-2}\text { or }\beta_{3,n-3}$ half-unknotted barbell. Consequently, this also shows that every diffeomorphism $F\in \pi_0\Diff_{PI}(M,\partial)$ satisfying that $\exists f\in \pi_0\mathcal{P}$ resulting in $F$ with vanishing first Hatcher-Wagoner invariant can be isotoped to a composition of half-unknotted barbell diffeomorphisms of index $(2,n-2)$ or $(3,n-3)$. 

First we assume $n\geq 6$. To construct the desired half-unknotted $(3,n-3)$-barbell $\beta$ in $M$, we present the following lemma:
\begin{lemma} \label{lem:embedding-disk-bundle-of-hopf-fibration-into-normal-bundle-of-sphere}
    Consider the Hopf fibration: $S^1\to S^3\to S^2$, which is the sphere bundle of a 2-dimensional vector bundle $\mathbb{R}^2\to E\to S^2$. Denote the associated disk bundle of $E$ by $D(E)$, thus $S^3=\partial D(E)$. Embed $S^2$ in a standard position $S^2\hookrightarrow \mathbb{R}^3\subset \mathbb{R}^6$, then there is a fiberwise embedding $i:D(E)\hookrightarrow \nu S^2$ over $S^2$, where we regard $\nu S^2\subset \mathbb{R}^6$ as a 4-dimensional vector bundle over $S^2$.
\end{lemma}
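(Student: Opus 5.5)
The plan is to reduce the statement to a question about vector bundles over $S^2$ and then read off the embedding from a bundle monomorphism. The normal bundle $\nu S^2$ of the standard $S^2\subset\mathbb{R}^3\subset\mathbb{R}^6$ is $\nu(S^2,\mathbb{R}^3)\oplus\underline{\mathbb{R}}^3=\underline{\mathbb{R}}\oplus\underline{\mathbb{R}}^3$, which is the trivial rank $4$ bundle $\underline{\mathbb{R}}^4$. The Hopf bundle $E$ is the tautological complex line bundle $L$ over $\mathbb{CP}^1=S^2$, viewed as a real plane bundle with Euler class $\pm1$; its sphere bundle is $S^3\to S^2$. A fiberwise embedding $D(E)\hookrightarrow\nu S^2$ over $S^2$ will follow from a fiberwise linear injection $E\hookrightarrow\underline{\mathbb{R}}^4$, because we can restrict such an injection to the unit disk bundle (after rescaling by a metric) and compose with the tubular neighborhood map $\nu S^2\to\mathbb{R}^6$.

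The main step is to produce that injective bundle map, equivalently a complement $E'$ with $E\oplus E'\cong\underline{\mathbb{R}}^4$. I will use the tautological description directly: $L=\{([z],v)\in\mathbb{CP}^1\times\mathbb{C}^2 : v\in[z]\}$ is by definition a subbundle of the trivial bundle $\mathbb{CP}^1\times\mathbb{C}^2=\underline{\mathbb{R}}^4$. The inclusion is fiberwise linear and injective, and its unit sphere bundle is $\{(\,[v],v) : |v|=1\}\cong S^3$, which is exactly the Hopf fibration. So $i$ is given by $([z],v)\mapsto([z],v)\in S^2\times\mathbb{C}^2\cong\nu S^2$, scaled into the tubular neighborhood.

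The remaining checks are routine. First, I identify $\nu S^2\cong S^2\times\mathbb{R}^4$ explicitly, using the outward normal in $\mathbb{R}^3$ together with the last three coordinate directions, and I fix this trivialization once and for all. Second, I note that the complement $L^\perp\cong\bar L$ has Euler class of opposite sign, which is consistent with $w_2(\underline{\mathbb{R}}^4)=0$ and $e(L)+e(\bar L)=0$; this is a sanity check on the total class, not an obstruction. I do not expect a real obstacle here. The only subtlety is orientation and sign conventions for which Hopf bundle is meant, and since both $L$ and $\bar L$ sit inside $\underline{\mathbb{C}}^2$, either choice works. Later in the section $n\ge 6$ will be needed to place this model $\mathbb{R}^6$ inside a chart of $M$.
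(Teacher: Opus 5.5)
Your proof is correct, but it takes a more explicit route than the paper.

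The paper argues by classification of bundles over $S^2$. Oriented plane bundles are determined by the Euler class and rank-$4$ bundles by $w_2$. So choosing $E_1$ with $e(E_1)=-1$ gives $w_2(E\oplus E_1)=0$, hence $E\oplus E_1$ is trivial and therefore isomorphic to $\nu S^2$. The paper uses the triviality of $\nu S^2$ only implicitly; you check it directly as $\underline{\mathbb{R}}\oplus\underline{\mathbb{R}}^3$.

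You instead identify the Hopf bundle with the tautological line bundle $L$, which is by definition a subbundle of $\underline{\mathbb{C}}^2\cong\underline{\mathbb{R}}^4$. No classification theorem is needed, and the paper's abstract complement $E_1$ becomes the concrete $L^\perp\cong\bar L$.

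Your version buys an explicit model, $R_0=\{([v],v):|v|=1\}$. In it, the intersection of $R_0$ with the normal disk over $p=[z]$ is literally the unit circle of the complex line $[z]\subset\mathbb{C}^2$. This makes the later identification of $\beta_0^\bullet\cap R_0$ with a Hopf fiber, and the framing computation giving $s=1$, very concrete.

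The paper's version buys generality. The same characteristic-class argument shows that every oriented plane bundle over $S^2$, of any Euler class, embeds fiberwise in $\underline{\mathbb{R}}^4$. The tautological construction is specific to Euler class $\pm1$.

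One small point about your sanity check: the Euler class of a Whitney sum is the cup product of the Euler classes, not their sum. The additive invariant you have in mind is $c_1$, whose reduction mod $2$ is $w_2$. So $c_1(L)+c_1(\bar L)=0$ is the correct form of that remark, though it plays no logical role in your proof.
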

\begin{proof}
    A 2-dimensional vector bundle $E'$ over $S^2$ is totally determined by its Euler characteristic class $e(E')\in H^2(S^2)=\mathbb{Z}$, and a 4-dimensional vector bundle $E''$ over $S^2$ is totally determined by its second Stiefel-Whitney class $w_2(E'')\in H^2(S^2,\mathbb{Z}_2)=\mathbb{Z}_2$. We know that $e(E)=1$, then choose $\mathbb{R}^2\to E_1\to S^2$ with $e(E_1)=-1$. Let $E''=E\oplus E_1$, then $w_2(E\oplus E_1)=0$ so $E\oplus E_1$ is trivial over $S^2$. Thus $N( S^2,\mathbb{R}^6)\cong E\oplus E_1$ as a 4-dimensional vector bundle, which gives the desired fiberwise embedding $i: D(E)\hookrightarrow \nu S^2$ over $S^2$.
\end{proof}

Then we begin the construction: Choose a small region $\mathbb{R}^n\subset M$ with a standard $S^2$ in it, fiberwise embed $D(E)$ into $\nu S^2$ over $S^2$ by the above construction $i$. Let $R_0=i|_{\partial D(E)}$. Choose a normal disk $\beta_0^\bullet=D^{n-2}$ in $\nu S^2$, thus $\beta_0^\bullet\cap R_0=S^1$ which is exactly a fiber of the Hopf fibration. Let $S=\partial \beta_0^\bullet$. For every $\gamma\in \pi_1 M$, choose a path from $*_0\in S$ to $p\in R_0$ representing $\gamma$, still denoting by $\gamma$ for convenience. Then the desired half-unknotted $(3,n-3)$-barbell is $\beta=(R_0, S,\gamma)$ (see \Cref{fig:description-for-3-n-3-barbell} for an illustration).

\begin{figure}[!ht]
    \centering
    \includegraphics[width=0.4\textwidth]{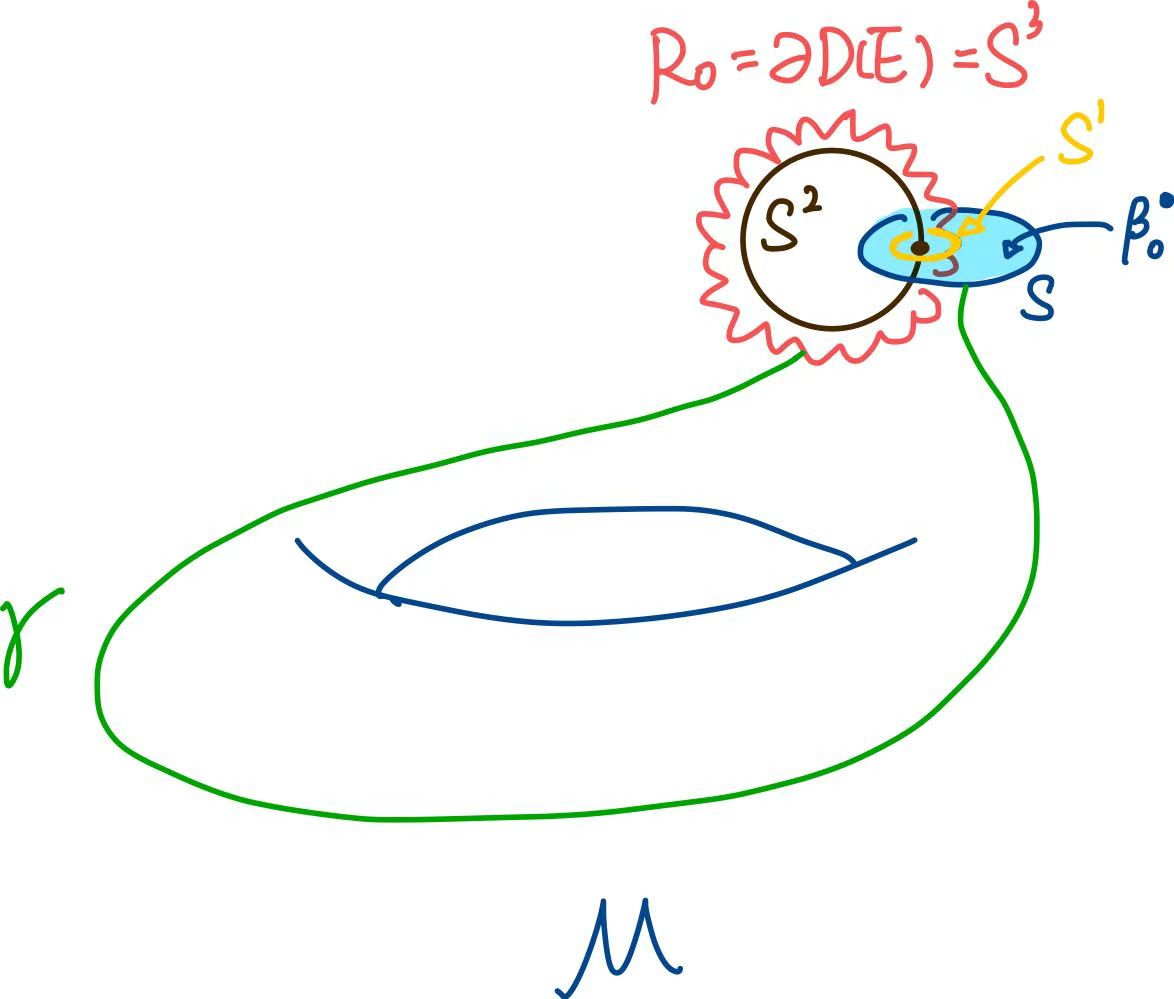}
    \caption{The description for the implanted $(3,n-3)$-barbell, with $R_0=\partial D(E)$ and $\beta_0^\bullet\cap R_0=S^1$ which is a fiber of the Hopf fibration $R_0\to S^2$.}
    \label{fig:description-for-3-n-3-barbell}
\end{figure}

\begin{proposition} \label{prop:calculate-second-hatcher-wagoner-invariant-for-beta-3-n-3}
    For the constructed $\beta=(R_0, S,\gamma)$, the $f_\beta\in \ker\Sigma$ we constructed in \cref{sec:changing-from-i-1-i-handle-pair-to-n-i-n-i-1-handle-pair}, whose Cerf diagram containing a single eye of $(n-3,n-2)$-handle pair, satisfies $\Theta(f_\beta)=(1,0)\cdot[\gamma]$. 
\end{proposition}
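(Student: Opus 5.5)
We follow the same scheme as in the proof of \Cref{thm:general-calculation-of-second-hatcher-wagoner-invariant-for-beta-2-n-2}, now with $i=3$ and $k=n-3$.

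\textbf{Setting up the eye.} By \Cref{cor:changing-from-i-1-i-handle-pair-to-n-i-n-i-1-handle-pair}, $f_\beta$ is obtained from the loop of $(2,3)$-handle pairs $g_\beta$ by the one-parameter dotted and 0-framed replacement. The data of $f_\beta$ are:
\begin{itemize}
\item $\beta_t^\bullet$, for $t\in[0,3]$, giving the attaching $(n-3)$-spheres $A_t=\beta_t$;
\item $\gamma_t^\bullet$, giving the belt spheres $B_t$, which are stationary for $t\in[2,3]$.
\end{itemize}
The condition $n\geq 6$ gives $2\le n-3\le n-2$, so $\Theta$ is defined by the single-eye formula. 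For $t\in[0,2]$ the spheres meet in one point, so
\[
\Theta(f_\beta)=\sum_i (s_i,\beta_i^*)\cdot[\gamma_i^*],
\]
where the sum runs over the circles of $\beta_2^\bullet\cap\gamma_2^\bullet$. Pulling back by the isotopy $\phi_2$ (which fixes $*_0$) reduces everything to $\beta_1^\bullet=\beta_0^\bullet$ and $\gamma_1^\bullet=\gamma_0^\bullet\,\text{tube}_\gamma R_0$. Here $\gamma_0^\bullet$ is a normal $D^3$ to $S$, and $R_0\cap\beta_0^\bullet$ is a single Hopf fiber $S^1$. So there is exactly one circle $S_1^1$, besides the arc $I$ from the tube.

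\textbf{The $\pi_1$ and $\pi_2$ components.} As in the $(2,n-2)$ case, take $\delta_1^A\subset\beta_0^\bullet$ and $\delta_1^B$ running along $\gamma$ and then into $R_0$. This gives $\gamma_1^*=[\gamma]$, since $\beta_0^\bullet$ and the region of $R_0$ involved are contractible and lie in a ball.

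For $\beta_1^*$, take $D_1^A\subset\beta_0^\bullet$ and $D_1^B\subset R_0=S^3$ bounding $S_1^1$, which is possible since the fiber is unknotted in $S^3$. The union is a $2$-sphere inside the ball $\mathbb{R}^n\subset M$, so $\beta_1^*=0$. The paths and disks are not contained in the small region containing $\gamma$, but only the portion of $\delta^B$ along $\gamma$ leaves the ball. Hence the result is $(s_1,0)\cdot[\gamma]$.

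\textbf{The $\mathbb{Z}_2$ component (main obstacle).} Here the situation differs from the $(2,n-2)$ case. Now $S_1^1$ has codimension $2$ in $\gamma_1^\bullet$ (a $3$-dimensional piece of $R_0$), so the equality $\nu(S_1^1,\gamma^\bullet)=\nu(\beta^\bullet,M)|$ that forced $s_i=0$ no longer holds. The bundle $E=\nu(S_1^1,B)$ has rank $n-3\geq 3$, so $\pi_1 SO_{n-3}=\mathbb{Z}_2$. We compare two framings of $E$:
\begin{itemize}
\item $e_B$, induced from the disk $D_1^B\subset S^3$ together with the $t$-direction;
\item $e_A$, induced from the normal framing of $\beta_0^\bullet$ in $M$, equivalently from the normal disk fiber of $\nu S^2$ restricted to the fiber circle.
\end{itemize}
Decompose $\nu(S_1^1,R_0)$ as the horizontal $2$-plane bundle of the Hopf fibration along a fiber. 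The framing of this plane bundle coming from the trivialization over the base point of $S^2$ (which is $e_A$) and the framing coming from a Seifert disk of the fiber in $S^3$ (which is $e_B$) differ by the linking number of two Hopf fibers, namely $1$.

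So the plan is to:
\begin{enumerate}
\item compute that the relative rotation is one full twist in $SO_2\subset SO_{n-3}$;
\item check that the other summands ($E'$, $E''$, and the $\mathbb{R}^3$/$E_1$ directions from \Cref{lem:embedding-disk-bundle-of-hopf-fibration-into-normal-bundle-of-sphere}) contribute trivially, using that the fiberwise embedding $i$ is a bundle map over $S^2$, hence constant along fibers.
\end{enumerate}
This gives $s_1=1$ and therefore $\Theta(f_\beta)=(1,0)\cdot[\gamma]$. The main difficulty I expect is keeping the framing bookkeeping honest, in particular verifying that the stabilization by $E_1$ does not add a second twist that would cancel the first.
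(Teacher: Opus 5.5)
Your proposal is essentially the paper's proof. You pull back via $\phi_2$ to $\beta_0^\bullet$ and $\gamma_1^\bullet=\gamma_0^\bullet\,\text{tube}_\gamma R_0$, read off $[\gamma]$ and a trivial $\pi_2$ class from the single Hopf-fiber circle, and get $s=1$ because the framing induced by $\beta_0^\bullet$ is the pullback of a constant vector of $T_pS^2$, which differs from the Seifert framing of the fiber by $\pm1$, the linking number of two Hopf fibers.

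The one slip is that $E=\nu(S_1^1,B)$ has rank $3$, not $n-3$. It splits as $\nu(\beta_0^\bullet,M)|_{S_1^1}\oplus\mathbb{R}_t=\nu(S_1^1,\gamma_1^\bullet)\oplus\mathbb{R}_t$; this transversality identification still holds, and what changes from the $(2,n-2)$ case is only that a rank-$2$ bundle admits $\mathbb{Z}$ worth of framings. So the $E_1$ and $\mathbb{R}^{n-6}$ directions are tangent to $\beta_0^\bullet$ and never enter $E$, and the cancelling second twist you were worried about cannot occur.
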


\begin{proof}
    We review the procedure in \cref{sec:changing-from-i-1-i-handle-pair-to-n-i-n-i-1-handle-pair} in the case $i=3$, first we obtain $g_\beta\in \pi_0\mathcal{P}$ which is represented by an element $[\nu T_R]\in \pi_1(\Emb(\nu S^2, M\#S^2\times S^{n-2}),*)$ where $T_R$ is an embedded $S^2\times S^1$ (in the dotted version, see \Cref{fig:embedded-torus} for an illustration). We choose $\gamma_0^\bullet$ to be a standard normal disk of $S$ which bounds the standard $S^2=\gamma_0$, then as the standard $S^2$ scans along $T_R$, i.e. scanning along $\gamma$ and then along $R_0$ and coming back along $\gamma^{-1}$, in the dotted version, we naturally get $\gamma_{t}^\bullet,t\in [0,1]$. We draw $\gamma_0^\bullet$ and $\gamma_1^\bullet=\gamma_0^\bullet\text{ tube}_\gamma R_0$ in \Cref{fig:embedded-torus}. The $\beta_t^\bullet=\beta_0^\bullet,t\in [0,1]$ are omitted there.

\begin{figure}[!ht]
    \centering
    \includegraphics[width=0.75\textwidth]{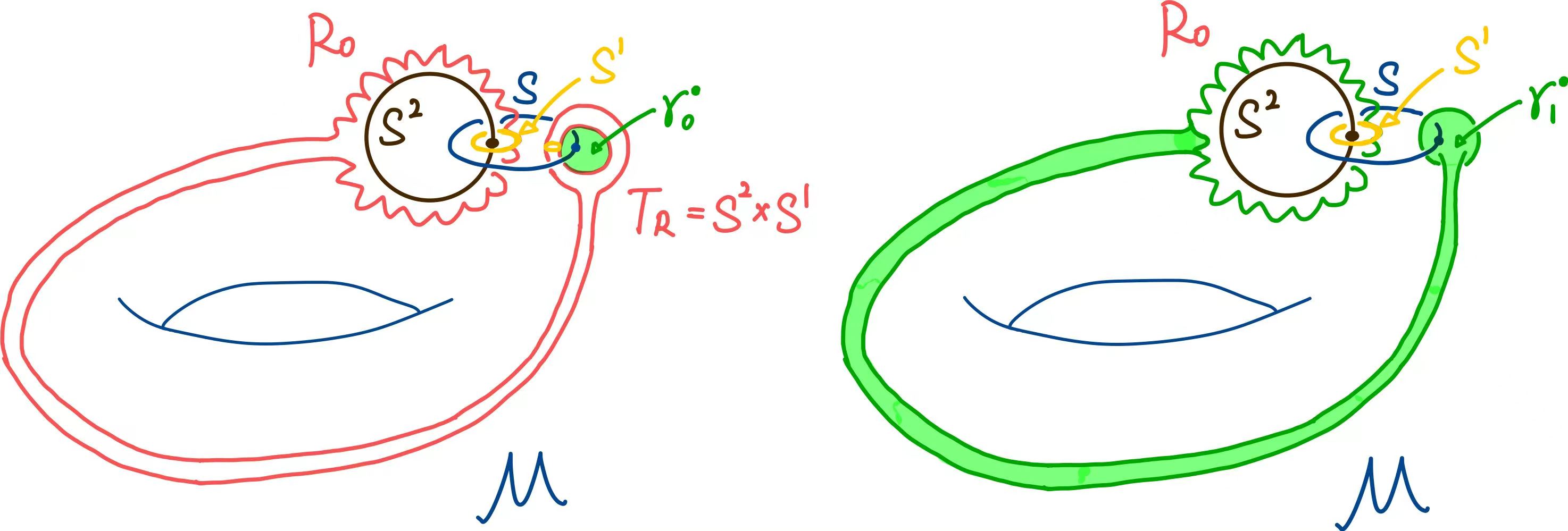}
    \caption{Here we draw $T_R=S^2\times S^1\text{ tube}_\gamma R_0, \gamma_0^\bullet$ and $\gamma_1^\bullet=\gamma_0^\bullet\text{ tube}_\gamma R_0$ in the dotted version.}
    \label{fig:embedded-torus}
\end{figure}
    
Then we apply \Cref{lem:one-parameter-version-of-dotted-and-0-framed-replacement} and use the strategy of \emph{one-parameter version of dotted and 0-framed replacement} to get $f_\beta$. To calculate $\Theta(f_\beta)$, like what we did in \cref{sec:computing-second-hatcher-wagoner-invariants-for-beta-2-n-2}, we consider $\beta_2^\bullet\cap \gamma_2^\bullet=I\sqcup S^1$ and two natural framings on $\nu (\beta_2^\bullet, M\times I)|_{S^1}=\nu (S^1, \gamma_2^\bullet\times I)$. Note that $\nu (S^1, \gamma_2^\bullet\times I)=\nu (S^1, \gamma_2^\bullet)\oplus E'$ and $\nu (\beta_2^\bullet, M\times I)=\nu (\beta_2^\bullet, M)\oplus E''$ where $E',E''$ are two trivial bundle over $S^1$, and $\nu (\beta_2^\bullet, M)|_{S^1}=\nu (S^1, \gamma_2^\bullet)$. Then $s=s_A-s_B=e_A-e_B$ where $e_A$ is the framing of $\nu (\beta_2^\bullet, M)|_{S^1}$ naturally induced by $\beta_2^\bullet$, $e_B$ is the usual $0$-framing of $\nu (S^1, \gamma_2^\bullet=D^3)$, i.e. naturally induced by a Seifert surface. Then like in \cref{sec:computing-second-hatcher-wagoner-invariants-for-beta-2-n-2}, we use the isotopy $\phi_2$ to pullback $\beta_2^\bullet=\phi_2(\beta_1^\bullet)$ to $\beta_1^\bullet$ and $\gamma_2^\bullet=\phi_2(\gamma_1^\bullet)$ to $\gamma_1^\bullet$, thus $s=e_A-e_B$ where $e_A$ is the framing of $\nu (\beta_1^\bullet, M)|_{S^1}$ naturally induced by $\beta_1^\bullet$ and $e_B$ is the usual $0$-framing of $\nu (S^1, \gamma_1^\bullet=D^3)$, i.e. naturally induced by a Seifert surface. 

To calculate $e_A$, we use the fact that the intersection $S^1$ is a fiber of the Hopf fibration, which divides $S^3=R_0$ into a $S^2$-family of $S^1$. Divide $S^3$ into two solid turi, $S^3=S^1\times D^2\cup_\partial D^2\times S^1$, and a generic fiber is just a (1,1) torus knot on the boundary $T^2=\partial(S^1\times D^2)$, in particular, it is an unknot. Let $\pi: R_0\to S^2$ be the fibration and the intersection $S^1=\pi^{-1}(p),p\in S^2$. Thus the framing of $\nu (\beta_1^{\bullet}=\beta_0^\bullet, M)|_{S^1}$ induced by $\beta_1^\bullet$, which is the standard normal disk at $p\in S^2$, is just the framing of $(\pi^*TS^2)|_{\pi^{-1}(p)}$ induced by $T_p S^2$, which is the 1-framing of the unknot $S^1\subset R_0=S^3$. See \Cref{fig:framing-for-fiber-in-hopf-fibration} for an illustration. Thus $e_A=1, e_B=0\Rightarrow s=1$.

\begin{figure}[!ht]
    \centering
    \includegraphics[width=0.75\textwidth]{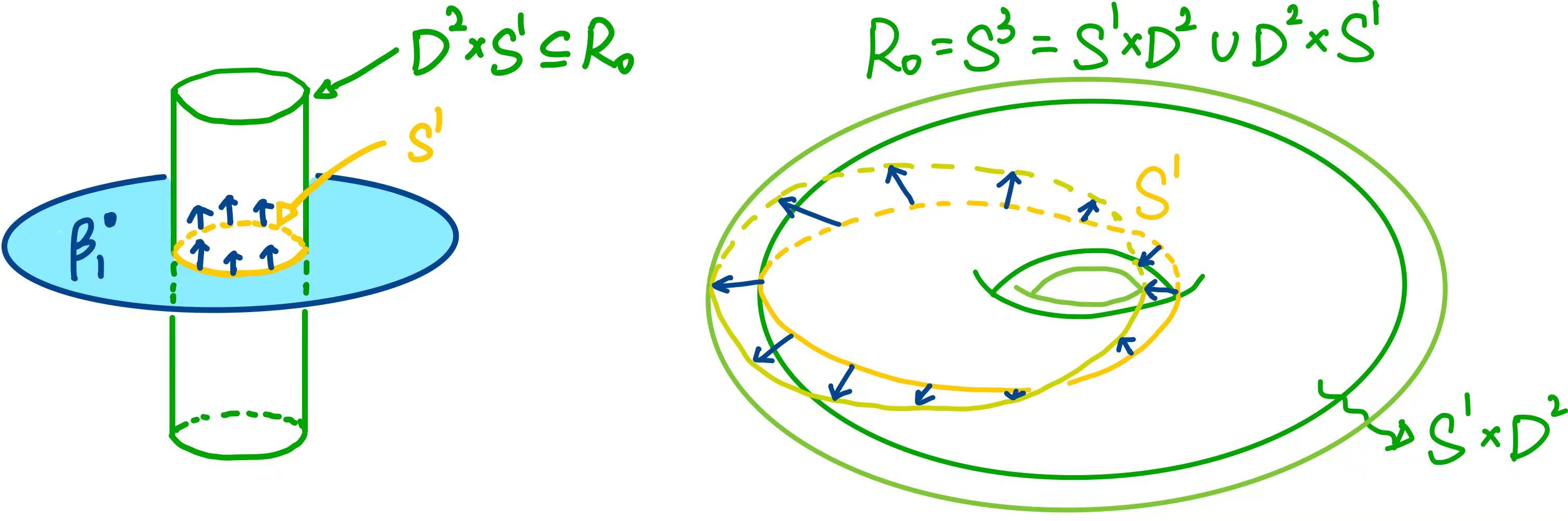}
    \caption{On the left, we draw how $R_0$ (thus $\gamma_1^\bullet)$  intersects $\beta_1^\bullet$ locally, and thus $e_A$ is induced by $(\pi^*TS^2)|_{\pi^{-1}(p)}$ where $\pi$ stands for the Hopf fibration $\pi:R_0=S^3\to S^2$. On the right we draw a section of $(\pi^*TS^2)|_{\pi^{-1}(p)}$ induced by $v\in T_p S^2$, which is the 1-framing on the (1,1) torus knot, which is the usual unknot.}
    \label{fig:framing-for-fiber-in-hopf-fibration}
\end{figure}

Since the constructed barbell is supported in $D^n\times S^1\subset M$ which has trivial $\pi_2$, this implies the $\pi_2$-component of $\Theta(f_\beta)$ must be 0. To calculate the $\pi_1$-component for $f_\beta$, by definition, we choose $q\in S^1=R_0\cap \beta_0^\bullet$, find two paths, $\delta^B\subset \gamma_1^\bullet$ from $q$ to $*_0$, $\delta^A\subset \beta_1^\bullet=\beta_0^\bullet$ from $q$ to $*_0$, thus by results from \cref{sec:computing-second-hatcher-wagoner-invariants-for-beta-2-n-2}, $\Theta(f_\beta)=(1,0)\cdot[\delta^B*(\delta^A)^{-1}]=(1,0)\cdot[\gamma]$ by the construction.

\end{proof}

\begin{remark}
    Let $M=S^1\times D^{n-1}$ and $\gamma=1\in \pi_1(M)=\mathbb{Z}$. In \cite[section 4]{hatcher1978concordance} Hatcher proposed a non-trivial pseudo-isotopy of $M$ which is originally due to Farrel (unpublished) by doing two embedded surgeries on the mid-ball $D_0=*\times D^{n-1}$ to get another embedded ball $D_1\subset M$ and considering the trace of the two surgeries, which gives a concordance from $D_0$ to $D_1$ in $M\times I$, i.e. a proper embedding of $F:D^{n-1}\times I\hookrightarrow M\times I$ with $F(D^{n-1}\times 0)=D_0, F(D^{n-1}\times 1)=D_1$. By using the h-cobordism theorem one can show that the complement of $F(D^{n-1}\times I)$ is a product, thus we get the pseudo-isotopy, which is a diffeomorphism of $M\times I$. In \Cref{fig:hatchers-example} we present a copy in Hatcher's work illustrating the two surgeries on $D_0$ to get $D_1$. As an exercise, the reader can vertify for themselves that, for $i=2,n\geq 6$, $D_1$ obtained from Hatcher's procedure is the same as the image of the mid-ball via the $(3,n-3)$-barbell diffeomorphism constructed above.   

    \begin{figure}[!ht]
        \centering
        \includegraphics[width=0.75\textwidth]{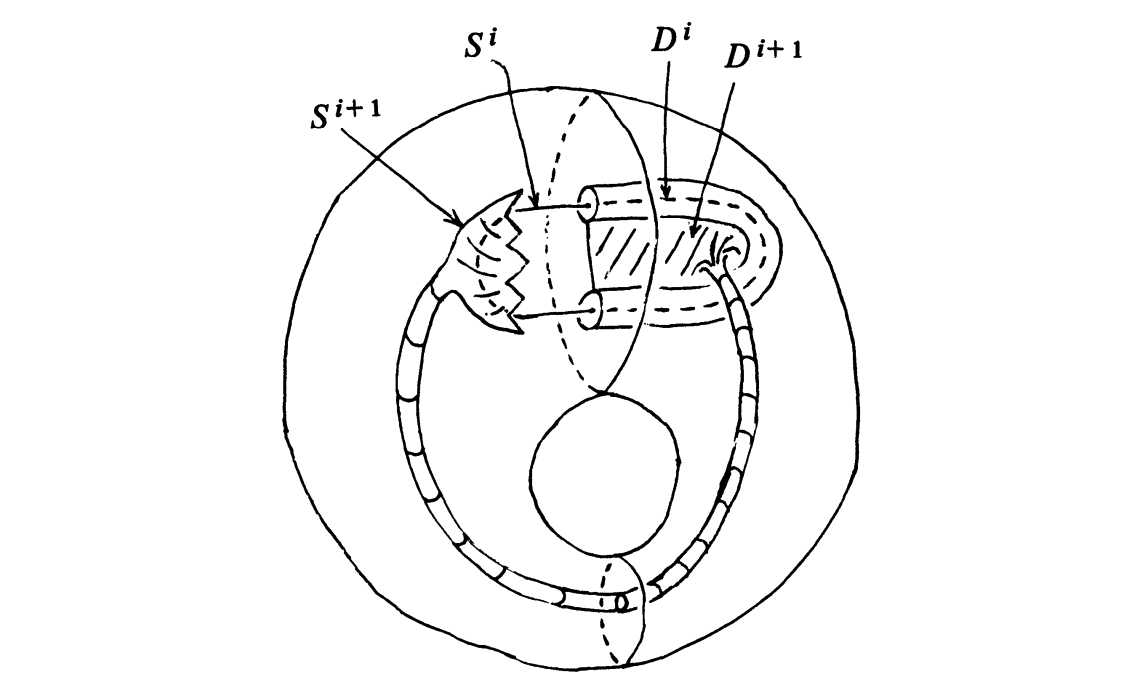}
        \caption{$D_1$ is obtained from $D_0$ by two embedded surgeries: The first one is by attaching a trivial $i$-handle $h_i$ on $D_0$, this gives the resulting ball $D_0'=D_0\#S^i\times S^{n-1-i}$, then we attach a $(i+1)$-handle $h_{i+1}$ with the attaching $D^{i+1}$ being the standard one tubed along the loop to another $S^{i+1}$, which is a perturbed Hopf fibration over $S^i$ where $S^i$ is obtained from the core of $h_i$. See \cite{hatcher1978concordance} for detailed descriptions.}
        \label{fig:hatchers-example}
    \end{figure}
\end{remark}

\begin{remark}
    In this remark, the author would like to discuss the difficulty she has encountered when generalizing the same result in dimension $n=5$. First, it can be shown that there doesn't exist an embedding $f: S^3\hookrightarrow S^2\times \mathbb{R}^3$ such that $f$ is homotopic to the Hopf map. So $R_0=S(E)$ cannot be embedded.
    
    But as we see in \cref{sec:generalizations-to-immersed-barbell-diffeomorphisms}, in the perspective of parameterised surgery, when we construct $g_\beta$ for a half-unknotted implanted $(i,n-i)$-barbell $\beta=(R_0,S,\gamma)$, $T_R=S^{i-1}\times S^1\text{ tube}_\gamma R_0$ does not need to be embedded, as long as it can be parameterised into a loop of framed embedded $S^{i-1}$, we can construct $[\nu T_R]\in \pi_1(\Emb(\nu S^{i-1}, M\# S^{i-1}\times S^{n-i+1}),*)$ which corresponds to $g_\beta\in \pi_0\mathcal{P}$ with $g_\beta$ resulting in the immersed barbell diffeomorphism with respect to $\beta$. So we may try to find an immersion $f: S^3\to S^2\times \mathbb{R}^3$, which is homotopic to Hopf map, such that by removing a standard small $D^3\subset S^3$, we can divide $S^3\setminus \text{int}(D^3)$ into an $I$-family of $D^2_t,t\in I$ with common boundary $S^1=\partial D^2_t$ such that $f|_{D^2_t}$ is an embedding for all $t\in I$. By perturbing $f$ we may assume that $f$ only has double points. Let $S\subset \text{Im} f$ be the image of all double points, i.e. $f|_{f^{-1}(S)}:f^{-1}(S)\to S$ is a 2-to-1 covering map between 1-dimensional smooth manifolds, i.e. disjoint union of some circles $S^1$. Let $f^{-1}(S)=\sqcup _{k=1}^m S_k^1$, then if $S^3$ can be divided satisfying the above property, it means that for every $k$, $f|_{S_k^1}$ must be an embedding, i.e. $f|_{f^{-1}(S)}$ must be a trivial double cover. But so far the author hasn't managed to find one.

    Here we present an explicit immersion $f$ with $S=S^1$ and $f^{-1}(S)\to S$ is the nontrivial 2-to-1 covering map, which might be useful:
    $$f: S^3\to S^2\times \mathbb{R}^3: (z_1,z_2)\mapsto ([z_1,z_2],x_1,x_2,x_1y_1+x_2y_2)$$
    where we regard $S^3=\{(z_1,z_2)\in \mathbb{C}^2||z_1|^2+|z_2|^2=1\}$ and $z_i=x_i+iy_i$.

\end{remark}

\nocite{*}
\bibliographystyle{plain}
\bibliography{references}

Q{\scriptsize IUZHEN} C{\scriptsize OLLEGE}, T{\scriptsize SINGHUA} U{\scriptsize NIVERSITY}, B{\scriptsize EIJING}, C{\scriptsize HINA} 

\emph{Email address}, Xiayu Tan: \texttt{tan-xy22@mails.tsinghua.edu.cn} 

\end{document}